\theoremstyle{plain}
\newtheorem{thm}{Theorem}[section]
\newtheorem{prop}[thm]{Proposition}
\newtheorem{lemma}[thm]{Lemma}
\newtheorem{cor}[thm]{Corollary}
\theoremstyle{definition}
\newtheorem{defi}[thm]{Definition}
\newtheorem{ex}{Example}
\theoremstyle{remark}
\newtheorem{remark}[thm]{Remark}
\newcommand{\cithree}[3]{_{{#1}\;\!}{#2}_{\;{#3}}}
\newcommand{\q}[1]{``#1''}
\DeclareMathOperator*{\argmax}{\arg\max}
\newcommand{\norm}[1]{\left\lVert#1\right\rVert}
\algnewcommand{\IfThenElse}[3]{
  \State \algorithmicif\ #1\ \algorithmicthen\ #2\ \algorithmicelse\ #3}
  \algnewcommand{\IfThen}[2]{
  \State \algorithmicif\ #1\ \algorithmicthen\ #2}
\algnewcommand{\InlineFor}[2]{%
    \State \algorithmicfor\ {#1}\ \algorithmicdo\ {#2} \algorithmicend\ \algorithmicfor%
}
\algnewcommand{\LineComment}[1]{\Statex \(\triangleright\) #1}
\algnewcommand\algorithmicforeach{\textbf{for each}}
\tikzset{
    invisible/.style={opacity=0},
    visible on/.style={alt=#1{}{invisible}},
    alt/.code args={<#1>#2#3}{%
      \alt<#1>{\pgfkeysalso{#2}}{\pgfkeysalso{#3}} 
    },
  }
\def\@maketitle{%
  \newpage
  \null
  \vskip 2em%
  \begin{center}%
  \let \footnote \thanks
    {\Large\bfseries \@title \par}%
    \vskip 1.5em%
    {\normalsize
      \lineskip .5em%
      \begin{tabular}[t]{c}%
        \@author
      \end{tabular}\par}%
    \vskip 1em%
    {\normalsize \@date}%
  \end{center}%
  \par
  \vskip 1.5em}
\begin{document}

\begin{frontmatter}

\title{From the Bernoulli Factory to a Dice Enterprise via Perfect Sampling of Markov Chains}
\runtitle{A Dice Enterprise}

\begin{aug}
\author{\fnms{Giulio} \snm{Morina}\ead[label=e1]{G.Morina@warwick.ac.uk}}
\and
\author{\fnms{Krzysztof} \snm{\L atuszy\'nski}\ead[label=e2]{K.G.Latuszynski@warwick.ac.uk}}

\address{
Department of Statistics\\
University of Warwick\\
Coventry, CV4 7AL - UK \\
\printead{e1,e2}}

\author{\fnms{Piotr} \snm{Nayar}
\ead[label=e3]{P.Nayar@mimuw.edu.pl}}

\address{
Institute of Mathematics  \\
Faculty of Mathematics, Informatics and Mechanics \\
University of Warsaw \\
Banacha 2, 02-097 Warsaw - Poland \\
\printead{e3}\\}

\author{\fnms{Alex} \snm{Wendland}
\ead[label=e4]{a.p.wendland@gmail.com}}

\address{
Mathematics Institute \\
Zeeman Building \\
University of Warwick \\
Coventry, CV4 7AL – UK \\
\printead{e4}\\}

\runauthor{G. Morina et al.}

\end{aug}

\begin{abstract}
\ Given a $p$-coin that lands heads with unknown probability $p$, we
wish to produce an $f(p)$-coin for a given function $f: (0,1)
\rightarrow (0,1)$. This problem is commonly known as the Bernoulli
Factory and results on its solvability  and complexity have been
obtained in \cite{Keane1994,Nacu2005}. Nevertheless, generic ways to
design a practical Bernoulli Factory for a given function $f$ exist
only in a few special cases. We present a constructive way to build an
efficient Bernoulli Factory when $f(p)$ is a rational function with
coefficients in $\mathbb{R}$. Moreover, we extend the Bernoulli
Factory problem to a more general setting where we have access to an
$m$-sided die and we wish to roll a $v$-sided one; i.e., we 
consider rational functions between
open probability simplices. Our construction consists of rephrasing
the original problem as simulating from the stationary distribution of
a certain class of Markov chains - a task that we show can be achieved
using perfect simulation techniques with the original $m$-sided die as
the only source of randomness. In the Bernoulli Factory case, the number of tosses
needed by the algorithm has exponential tails and its expected value can be bounded uniformly in $p$. En route
to optimizing the algorithm we show a fact of independent interest:
every finite, integer valued, random variable will eventually become log-concave
after convolving with enough Bernoulli trials.  
\end{abstract}
\end{frontmatter}

\newpage
 \section{Introduction}

Back in 1951, Von Neumann \cite{VonNeumann1951} proposed a method to
produce a fair coin out of a biased one. Since then, the problem has
been generalized into finding an algorithm that given a $p$-coin --- a
coin that lands heads with unknown probability $p$ --- can produce an
$f(p)$-coin for a given function $f: \mathcal{D} \subseteq (0,1)
\rightarrow (0,1)$. Keane and O'Brien \cite{Keane1994} referred to
this problem as the Bernoulli Factory and, motivated by problems in
regenerative steady-state simulations \cite{asmussen1992stationarity, MR1957197}, identified the class of
functions $f$ for which it is solvable. Since then, other studies have
been carried out to provide ways of constructing and analysing the
Bernoulli Factory algorithms
\cite{Nacu2005,Mossel2005,MR2784483,Latuszynski2009,Huber2013,Huber2017,Mendo2016}
as well as extending it to quantum settings \cite{Dale2015,
  patel2019experimental, yuan2016experimental} and
specialised multivariate scenarios \cite{Huber2017,Dughmi2017}. Relations
between the Bernoulli Factory and other fundamental simulation questions in
statistics and computer science have been explored in \cite{MR3319143} and
\cite{flajolet2011buffon, Mossel2005}, respectively.  More
recently, Bernoulli Factory techniques have been successfully applied
to perform exact simulations of diffusions
\cite{Latuszynski2009,Blanchet2017}, develop perfect simulation
algorithms \cite{Blanchet2005,Flegal2012,Lee2014}, design MCMC
algorithms that can tackle  intractable likelihood models and perform
Bayesian inference
\cite{Herbei2014,Goncalves2017,Goncalves2017b,dootika2019}, design
particle filters in scenarios where weights are not available
analytically \cite{schmon2019bernoulli},
and also to reductions in mechanism design
\cite{Dughmi2017,niazadeh2017algorithms, cai2019efficient}. 

Nevertheless, designing a Bernoulli Factory algorithm for a given function $f$ is still challenging. The strategy described in \cite{Nacu2005} can be generally applied for any real analytic function $f$, but the combinatorial complexity of the implementation is prohibitive. When combined with the reverse time martingale approach of \cite{Latuszynski2009}, the implementation becomes feasible, but the running time depends on the speed of convergence of certain Bernstein polynomial envelopes and is often impractical. Specialised fast algorithms are available for linear functions \cite{Huber2013,Huber2017}, under additional assumptions on the domain, or functions admitting specific series expansions \cite{MR2958668,Latuszynski2009,Mendo2016}. However, for other classes of functions constructing a Bernoulli Factory is generally hard and even when an algorithm is available, its running time may be prohibitive. Moreover, the problem of extending the classic Bernoulli Factory setting to a multivariable one --- that is producing rolls of a die given an arbitrary other one --- has not been systematically studied.

In this paper we provide a novel constructive way to design a
Bernoulli Factory for rational functions $f$ with coefficients in
$\mathbb{R}$. Our construction can be applied to rational functions
mapping between probability simplices 
\begin{equation}
\label{f_def} 
f: \Delta^m \rightarrow \Delta^v,  \qquad m, v \geq 1, 
\end{equation} thus generalizing the classic Bernoulli Factory to a \textit{Dice Enterprise}.

Our approach relies on rephrasing the original problem as sampling
from the stationary distribution of a suitably designed Markov
chain. This is achieved by first decomposing the given rational function in a fashion inspired by \cite{Mossel2005}, and similarly based on Polya's theorem on homogeneous positive polynomials. However, the decomposition is extended in such a way that it allows to construct a Markov chain whose evolution can be
simulated by just rolling the original die. We also allow for coefficients in $\mathbb{R}$ and derive our explicit construction for multivariate scenarios. Then, the Perfect
simulations techniques, such as Coupling From The Past (CFTP)
\cite{Propp1996} or Fill's interruptible algorithm \cite{Fill1998},
can then be employed to get a sample distributed precisely as its
stationary distribution. Moreover, for $m=1$ in \eqref{f_def}, that includes the classic Bernoulli Factory
setting $m=v=1$ as a special case, a monotonic version of CFTP is proposed,
improving the efficiency of implementation. Under this scenario, we show that the method has a \q{fast simulation} (i.e., the required number of tosses has exponentially decaying tail probabilities) and the expected number of calls to the original die is linear in the degree of the resulting polynomial. To prove the result we demonstrate a fact of wider interest: the convolution of a $\text{Bin}\left(n,\frac{1}{2}\right)$ variable with any finite, integer valued random variable is log-concave when $n$ is big enough.

The paper is organised as follows:

Section \ref{sec:notation_preliminaries} introduces the notation and
notions that will be used throughout the paper. In particular, it introduces ladders, a class of
discrete probability distributions that are suitable as candidates for stationary distributions of Markov chains used in the sequel. 

Section \ref{sec:dice_enterprise} develops the Dice Enterprise for
rational functions $f: \Delta^m \rightarrow \Delta^v$, thus
generalising the usual Bernoulli Factory setting. We first show how
this problem can be rephrased as sampling from a ladder and construct
a CFTP algorithm that performs it. We prove that our proposed construction is optimal in terms of Peskun's ordering. We then analyse the efficiency of
the proposed algorithm in terms of the expected number of required
rolls of the original die and notice that it is always finite under suitable assumptions. In the \q{coin to dice} scenario (of which the Bernoulli Factory is a special case), we notice that for log-concave ladders the expected number of tosses is linear in the degree of the ladder. We then prove that it is always possible to construct such log-concave distribution.  

Section \ref{sec:examples_implementation} presents an \texttt{R} package that implements the developed method and explicative examples, validating the developed theory. In particular, we also show how a Dice Enterprise can be used to deal with $m$ independent coins and reproduce examples taken from \cite{Huber2017,Dughmi2017,Goncalves2017}. 

In Appendix \ref{sec:appendix_background} we give a background on simulating uniform random variables when the only available source of randomness is the given die, as well as a brief introduction to CFTP. Proofs of all the results are presented in Appendix \ref{sec:appendix_proofs}.

\section{Notation and Preliminaries}
\label{sec:notation_preliminaries}
Define the open $m$ dimensional probability simplex as 
\begin{equation*}
\Delta^m = \left\{\boldsymbol{p} = (p_0,\ldots,p_m) \in (0,1)^{m+1} : \sum_{i=0}^{m} p_i = 1\right\}
\end{equation*}
and by $\bar{\Delta}^m$  denote its closure. For $b \in \{0,\ldots,m\},$ by $e_b \in
\bar{\Delta}^m$ denote the $b$\textsuperscript{th} standard unit vector,
i.e.  a vector of zeros with a 1 in the
$b$\textsuperscript{th} position. We let the first element of a vector have index 0 and we interchangeably use $(p_0, p_1) \in \Delta^1$ and $p \in (0,1)$, identifying $p$ as $p_1$.
 We shall write $X \sim \boldsymbol{p}$ to denote that $X$ is a draw from the categorical distribution with parameter $\boldsymbol{p} \in \Delta^m$ on $\Omega=\{0,\ldots,m\}$.
If the vector $\boldsymbol{p}$ is not known explicitly, but there is a mechanism to sample $X \sim \boldsymbol{p}$ (e.g. via experiment or computer code), we call this mechanism a black box to sample from $\boldsymbol{p} \in \Delta^m$. Alternatively, if we want to stress that the vector $\boldsymbol{\mu} \in \Delta^n$ is given explicitly, we refer to it as known distribution $\boldsymbol{\mu}$.

In the following, we will assume that a generator of uniform random variables is available, as common in applications. However, this assumption is not restrictive from the theoretical viewpoint as we can use the given die to generate uniform random variables to any arbitrary precision. Indeed, notice that the binary representation of a uniform random variable can be seen as a sequence of fair coin tosses which we can obtain from the given die. We discuss this in more details in Appendix \ref{sec:appendix_sample_known_distributions}.

Given a rational function $f: \Delta^m \rightarrow \Delta^v$ such that $f(\boldsymbol{p})$ is a valid discrete probability distribution for all $\boldsymbol{p} \in \Delta^m$, in Section \ref{sec:dice_enterprise} we will construct a new function $\pi: \Delta^m \rightarrow \Delta^k$, named ladder, such that $\pi(\boldsymbol{p})$ is also a valid discrete probability distribution and draws from $\pi(\boldsymbol{p})$ can be transformed into draws from $f(\boldsymbol{p})$ and vice-versa. To this end, consider pairs of distributions related by disaggregation defined as follows:

\begin{defi}[Disaggregation]
\label{def:disaggregation}
Let $\boldsymbol{\mu} = (\mu_0,\ldots,\mu_k)$ and $\boldsymbol{\nu} = (\nu_0,\ldots,\nu_v)$ be probability distributions on $\Delta^k$ and $\Delta^v$ respectively with $v \leq k$. We say that $\boldsymbol{\mu}$ is a \emph{disaggregation} of $\boldsymbol{\nu}$ if there exists a partition of $\{0,\ldots,k\}$ into $v+1$ sets $A_0,A_2,\ldots,A_v$ such that 
\begin{equation*}
\nu_i = \sum_{j \in A_i} \mu_j, \qquad \text{for all } i \in \{0,\ldots,v\}.
\end{equation*}
If $\boldsymbol{\mu}$ is a disaggregation of $\boldsymbol{\nu}$, then we shall equivalently say
that $\boldsymbol{\nu}$ is an \emph{aggregation} of $\boldsymbol{\mu}$.
\end{defi}

If $\boldsymbol{\mu} = (\mu_0,\ldots,\mu_k)$ is a disaggregation of $\boldsymbol{\nu} = (\nu_0,\ldots,\nu_v)$ then, sampling from $\boldsymbol{\mu}$ is equivalent to sampling from $\boldsymbol{\nu}$ in the following sense:

\noindent Given $X \sim \boldsymbol{\mu}$, define $Y$ as $Y := i$ if $X \in A_i$. Then $Y \sim \boldsymbol{\nu}$. 

\noindent Given $X \sim \boldsymbol{\nu}$, define $Y$ by letting 
\begin{equation}
\label{eq:disaggregation}
\mathbb{P}(Y = i) = \mathbb{I}(i \in A_X)\frac{\mu_i}{\sum_{j \in A_X} \mu_j}.
\end{equation}
Then $Y \sim \boldsymbol{\mu}$.

\begin{figure}[H]
\small
\captionsetup{width=.8\linewidth}
{\caption{ Disaggregation. A sample from $\boldsymbol{\mu}$ is directly
      mapped to a sample from $\boldsymbol{\nu}$. A sample from $\boldsymbol{\nu}$ can be mapped
      to $\boldsymbol{\mu}$ proportionally. }\label{fig:disaggregation}}
{\begin{tikzpicture}
	\edef\y{-2}
	\node at (0,0) {$\boldsymbol{\nu} \propto$};
		\node (nu_one) at (3.5,0) {$2(p_0-p_1)^2$};
		\node (nu_two) at (6,0) {$3p_0$};
		\node (nu_three) at (7.5,0) {$p_0p_1$};
	\node at (0,\y) {$\boldsymbol{\mu} \propto$};
		\node (mu_one) at (1.5,\y) {$p_0$};
		\node (mu_two) at (2.7,\y) {$p_0$};
		\node (mu_three) at (4.5,\y) {$(p_0-p_1)^2$};
		\node (mu_four) at (6.3,\y) {$p_0$};
		\node (mu_five) at (7.7,\y) {$p_0p_1$};
		\node (mu_six) at (9.7,\y) {$(p_0-p_1)^2$};
		
	\draw[latex-latex] (nu_one) -- (mu_three);
	\draw[latex-latex] (nu_one) -- (mu_six);
	\draw[latex-latex,dashed] (nu_two) -- (mu_one);
	\draw[latex-latex,dashed] (nu_two) -- (mu_two);
	\draw[latex-latex,dashed] (nu_two) -- (mu_four);
	\draw[latex-latex,dotted] (nu_three) -- (mu_five);
\end{tikzpicture}}
\normalsize
\end{figure}
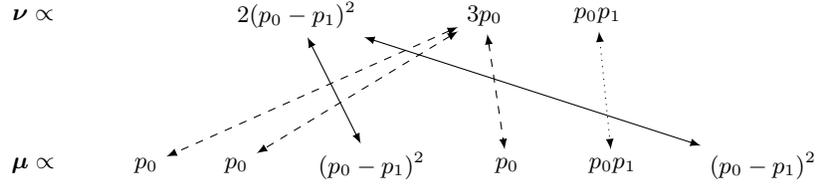

Our proposed construction requires being able to draw exactly from the stationary distribution of a Markov chain, a deed that we achieve by perfect simulation techniques. In particular, we will use Coupling From the Past (CFTP) \cite{Propp1996} a review of which is presented in Appendix \ref{sec:appendix_cftp}. 

\subsection{Ladder over $\mathbb{R}$}

We now introduce a class of probability distributions $\pi: \Delta^m
\rightarrow \Delta^k$ that will be of main interest in the remainder
of the paper. Recall that the 1-norm of a vector $\boldsymbol{a} = (a_0,\ldots,a_n)$ is $\norm{\boldsymbol{a}}_1 = \sum_{j=0}^n |a_j|$.

\begin{defi}[Multivariate ladder]
\label{def:multivariate_ladder}
For every $\boldsymbol{p} = (p_0,\ldots,p_m) \in \Delta^{m}$ let
$\pi(\boldsymbol{p}) =
(\pi_0(\boldsymbol{p}),\ldots,\pi_k(\boldsymbol{p}))$ be a probability
distribution on $\Omega = \{0,\ldots,k\}$. We say that
$\pi(\boldsymbol{p})$ is a \emph{multivariate ladder} over $\mathbb{R}$ if every $\pi_i$ is of the form
\begin{equation}
\label{eq:multivariate_ladder}
\pi_i(\boldsymbol{p}) = R_i \frac{\prod_{j=0}^{m} p_j^{n_{i,j}}}{C(\boldsymbol{p})},
\end{equation}
where
\begin{itemize}
\item $C(\boldsymbol{p})$ is a polynomial with real coefficients that
  does not admit roots in $\bar{\Delta}^m$;
\item $\forall i,j$, $R_i$ is a strictly positive real constant and $n_{i,j} \in \mathbb{N}_{\geq 0}$;
\item Denote $\boldsymbol{n}_i =
  (n_{i,0},n_{i,1},\ldots,n_{i,m})$. There exists an integer $d$ such
  that $\norm{\boldsymbol{n}_i}_1 = d$ for all $i$. We will refer to $\boldsymbol{n}_i$ as the degree of $\pi_i(\boldsymbol{p})$ and to $d$ as the degree of $\pi(\boldsymbol{p})$.
\end{itemize}
Moreover, we say that $\pi(\boldsymbol{p})$ is a \textit{connected ladder} if
\begin{itemize}
\item Each $i, j \in \Omega$ are
  connected, meaning that there exists a sequence of states $(i=s_1,
  s_2, ..., s_t=j)$, such that $\norm{\boldsymbol{n}_{s_h} -
    \boldsymbol{n}_{s_{(h-1)}}}_1  \leq 2$ for all $h \in \{2,\ldots,t\}$.
\end{itemize}
Finally, we say that $\pi(\boldsymbol{p})$ is a \textit{fine ladder} if
\begin{itemize}
\item $\boldsymbol{n}_i = \boldsymbol{n}_j$ implies $i = j$.
\end{itemize}
\end{defi}

Figure \ref{fig:multivariate_ladder} gives a graphical representation of a multivariate ladder and motivates the following concept of neighbourhood.

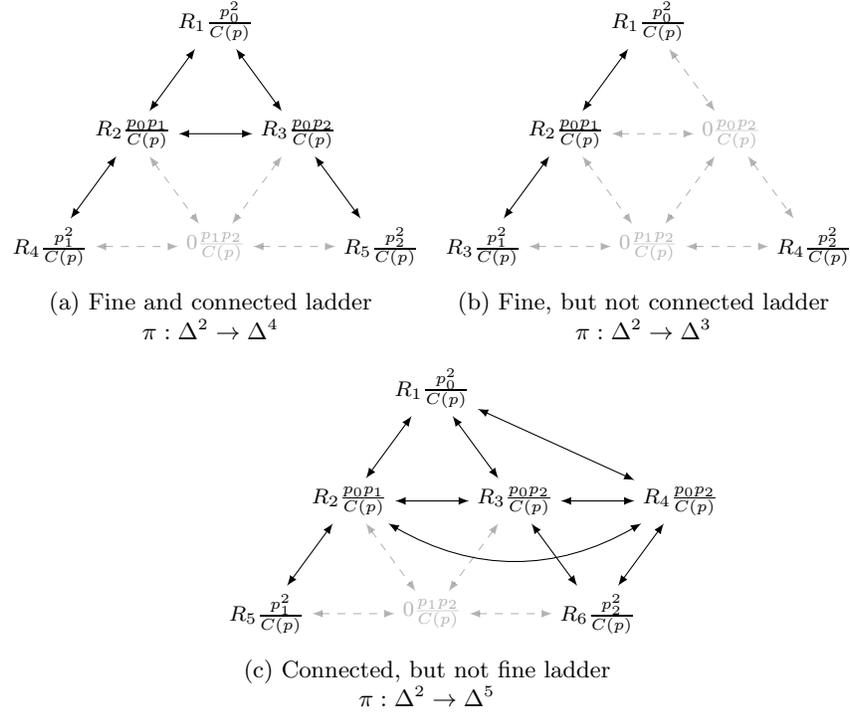
\begin{figure}[H]
    \centering
    \begin{subfigure}[b]{0.45\textwidth}
\begin{tikzpicture}
\edef\rows{2}
\node (p_0_0) at (2.2*0-2.2*0/2,-0*1.5) {$R_1\frac{p_0^2}{C(p)}$};
\node (p_1_0) at (2.2*0-2.2*1/2,-1*1.5) {$R_2\frac{p_0p_1}{C(p)}$};
\node (p_1_1) at (2.2*1-2.2*1/2,-1*1.5) {$R_3\frac{p_0p_2}{C(p)}$};
\node (p_2_0) at (2.2*0-2.2*2/2,-2*1.5) {$R_4\frac{p_1^2}{C(p)}$};
\node[opacity=.3] (p_2_1) at (2.2*1-2.2*2/2,-2*1.5) {$0\frac{p_1p_2}{C(p)}$};
\node (p_2_2) at (2.2*2-2.2*2/2,-2*1.5) {$R_5\frac{p_2^2}{C(p)}$};

\draw[latex-latex] (p_0_0) -- (p_1_0);
\draw[latex-latex] (p_0_0) -- (p_1_1);
\draw[latex-latex] (p_1_0) -- (p_1_1);
\draw[latex-latex] (p_1_0) -- (p_2_0);
\draw[latex-latex, opacity=.3, dashed] (p_1_0) -- (p_2_1);
\draw[latex-latex, opacity=.3, dashed] (p_1_1) -- (p_2_1);
\draw[latex-latex] (p_1_1) -- (p_2_2);
\draw[latex-latex, opacity=.3, dashed] (p_2_0) -- (p_2_1);
\draw[latex-latex, opacity=.3, dashed] (p_2_1) -- (p_2_2);

\end{tikzpicture}
		\captionsetup{justification=centering}
        \caption{Fine and connected ladder \linebreak $\pi: \Delta^2 \rightarrow \Delta^4$}
        \label{fig:multivariate_ladder_fine_connected}
    \end{subfigure}
    ~ 
    \begin{subfigure}[b]{0.45\textwidth}
     \begin{tikzpicture}
\edef\rows{2}
\node (p_0_0) at (2.2*0-2.2*0/2,-0*1.5) {$R_1\frac{p_0^2}{C(p)}$};
\node (p_1_0) at (2.2*0-2.2*1/2,-1*1.5) {$R_2\frac{p_0p_1}{C(p)}$};
\node[opacity=.3] (p_1_1) at (2.2*1-2.2*1/2,-1*1.5) {$0\frac{p_0p_2}{C(p)}$};
\node (p_2_0) at (2.2*0-2.2*2/2,-2*1.5) {$R_3\frac{p_1^2}{C(p)}$};
\node[opacity=.3] (p_2_1) at (2.2*1-2.2*2/2,-2*1.5) {$0\frac{p_1p_2}{C(p)}$};
\node (p_2_2) at (2.2*2-2.2*2/2,-2*1.5) {$R_4\frac{p_2^2}{C(p)}$};

\draw[latex-latex] (p_0_0) -- (p_1_0);
\draw[latex-latex, opacity=.3, dashed] (p_0_0) -- (p_1_1);
\draw[latex-latex, opacity=.3, dashed] (p_1_0) -- (p_1_1);
\draw[latex-latex] (p_1_0) -- (p_2_0);
\draw[latex-latex, opacity=.3, dashed] (p_1_0) -- (p_2_1);
\draw[latex-latex, opacity=.3, dashed] (p_1_1) -- (p_2_1);
\draw[latex-latex, opacity=.3, dashed] (p_1_1) -- (p_2_2);
\draw[latex-latex, opacity=.3, dashed] (p_2_0) -- (p_2_1);
\draw[latex-latex, opacity=.3, dashed] (p_2_1) -- (p_2_2);

\end{tikzpicture}
		\captionsetup{justification=centering}
        \caption{Fine, but not connected ladder \linebreak $\pi:\Delta^2 \rightarrow \Delta^3$}
        \label{fig:multivariate_ladder_fine}
    \end{subfigure}
    ~ 
    \begin{subfigure}[b]{0.45\textwidth}
      \begin{tikzpicture}
\edef\rows{2}
\node (p_0_0) at (2.2*0-2.2*0/2,-0*1.5) {$R_1\frac{p_0^2}{C(p)}$};
\node (p_1_0) at (2.2*0-2.2*1/2,-1*1.5) {$R_2\frac{p_0p_1}{C(p)}$};
\node (p_1_1) at (2.2*1-2.2*1/2,-1*1.5) {$R_3\frac{p_0p_2}{C(p)}$};
\node (p_extra) at (2.2*2-2.2*1/2,-1*1.5) {$R_4\frac{p_0p_2}{C(p)}$};
\node (p_2_0) at (2.2*0-2.2*2/2,-2*1.5) {$R_5\frac{p_1^2}{C(p)}$};
\node[opacity=.3] (p_2_1) at (2.2*1-2.2*2/2,-2*1.5) {$0\frac{p_1p_2}{C(p)}$};
\node (p_2_2) at (2.2*2-2.2*2/2,-2*1.5) {$R_6\frac{p_2^2}{C(p)}$};

\draw[latex-latex] (p_0_0) -- (p_1_0);
\draw[latex-latex] (p_0_0) -- (p_1_1);
\draw[latex-latex] (p_1_0) -- (p_1_1);
\draw[latex-latex] (p_1_0) -- (p_2_0);
\draw[latex-latex, opacity=.3, dashed] (p_1_0) -- (p_2_1);
\draw[latex-latex, opacity=.3, dashed] (p_1_1) -- (p_2_1);
\draw[latex-latex] (p_1_1) -- (p_2_2);
\draw[latex-latex, opacity=.3, dashed] (p_2_0) -- (p_2_1);
\draw[latex-latex, opacity=.3, dashed] (p_2_1) -- (p_2_2);
\draw[latex-latex] (p_extra) -- (p_0_0);
\draw[latex-latex] (p_extra) -- (p_2_2);
\draw[latex-latex, bend left] (p_extra) to (p_1_0);
\draw[latex-latex] (p_extra) -- (p_1_1);

\end{tikzpicture}
		\captionsetup{justification=centering}
        \caption{Connected, but not fine ladder \linebreak $\pi: \Delta^2 \rightarrow \Delta^5$}
        \label{fig:multivariate_ladder_connected}
    \end{subfigure}
    \caption{Multivariate ladders over $\mathbb{R}$. Edges represent connected states.}\label{fig:multivariate_ladder}
\end{figure}

\begin{defi}[Neighbourhoods on ladders]
\label{def:neighbourhood_ladder}
On a multivariate ladder $\pi: \Delta^{m} \rightarrow \Delta^{k}$
define the \emph{neighbourhood of $i \in \Omega$} as
$\mathcal{N}(i) = \{j \in \Omega \setminus \{i\}: \norm{\boldsymbol{n}_i - \boldsymbol{n}_j} \leq
2\}$. Note that for connected ladders $\mathcal{N}(i)$ must
have at least one element for each $i$ (in the non-trivial case of $k>1$). 
\end{defi}

\subsubsection{Operations on ladders}

We now introduce three operations on ladders of which we will make
extensive use: \textit{increasing the degree} of
a ladder, \textit{thinning} and \textit{augmenting} a ladder.

\begin{defi}[Increasing the degree]
\label{prop:increasing_degree_ladder}
Let $\pi: \Delta^m \rightarrow \Delta^{k}$ be a multivariate ladder of
degree $d$. \emph{Increasing the degree of $\pi$} yields a new
ladder $\pi': \Delta^m \rightarrow \Delta^{(k+1)(m+1)-1}$ of degree $(d+1)$ with
probabilities $\pi'_l(\boldsymbol{p})$ on $\Omega' = \{0,\ldots,(k+1)(m+1)-1\}$
of the form $\pi'_l(\boldsymbol{p}) := \pi_{i}(\boldsymbol{p})p_{j}$, where  $i \in \{0, \ldots, k\}$ and $j \in \{0,\ldots,m\}$ are the unique solution of $l = i(m+1)+j$.  \end{defi}

Increasing the degree corresponds to multiplying each state by $p_0,\ldots,p_m$ and the resulting ladder $\pi'(\boldsymbol{p})$ is a disaggregation of $\pi(\boldsymbol{p})$. Indeed, let  $A_0,\ldots,A_k$ be 
\begin{equation*}
A_i: = \{i(m+1), \ldots, i(m+1)+ m\}.
\end{equation*}
Then  definition \ref{def:disaggregation} is satisfied, since
\begin{equation*}
\sum_{a \in A_i} \pi'_a(\boldsymbol{p}) = \sum_{j=0}^{m} \pi_i(\boldsymbol{p})p_j = \pi_i(\boldsymbol{p})\sum_{j=0}^{m} p_j = \pi_i(\boldsymbol{p}).
\end{equation*}

\begin{defi}[Thinning]
\label{prop:thinning_ladder}
Let $\pi: \Delta^m \rightarrow \Delta^k$ be a multivariate ladder of degree $d$. \emph{Thinning} $\pi$ yields a fine ladder $\pi'$ by joining all the states of $\pi$ with the same monomial. Thus $\pi': \Delta^m \rightarrow \Delta^w$ where $k \geq w := | \{\boldsymbol{n}_0, \ldots, \boldsymbol{n}_{k} \}| $, and by \eqref{eq:multivariate_ladder} the probabilities $\pi'_l(\boldsymbol{p})$ on $\Omega' = \{0,\ldots,w\}$ are
of the form $\pi'_l(\boldsymbol{p}) := \frac{R'_l \boldsymbol{p}^{\boldsymbol{n}'_l}}{C(\boldsymbol{p})}$ where 
$R'_l = \sum_{i: \boldsymbol{n}_i = \boldsymbol{n}'_l} R_i.$
\end{defi}

Clearly, $\pi(\boldsymbol{p})$ is a disaggregation of the resulting
$\pi'(\boldsymbol{p})$. Moreover, if $\pi$ is a connected ladder, then
so is $\pi'$.

Increasing the degree will typically not result in a fine ladder as it produces \q{redundant} states, however thinning can be applied subsequently. We will refer to increasing the degree of the ladder first and thinning it afterwards, as \textit{augmenting} the ladder. 

\begin{defi}[Augmenting]
\label{def:augmenting_ladder}
Let $\pi: \Delta^m \rightarrow \Delta^k$ be a multivariate ladder of degree $d$. The augmented ladder $\pi': \Delta^m \rightarrow \Delta^w$, where $w < \min\{(k+1)(m+1), {{d+m+1}\choose{m}}\},$ is obtained by first increasing the degree of $\pi$ and then thinning it. 
\end{defi}

The fact that $w < {{d+m+1}\choose{m}}$ in the augmented ladder of degree $d+1$, follows by noticing that there are at most ${{d+m-1}\choose{m-1}}$ homogeneous monomials of degree $d$ in $m$ variables. Importantly, sampling from $\pi$ and its augmented ladder $\pi'$ is equivalent, since it is enough to transform the sample in line with the disaggregation and aggregation steps applied. Finally we make the following important remark that connects the operation of augmenting a ladder to that of convolution of random variables. 

\begin{remark}
\label{rmk:augmenting_convolution}
Let $m=1$, $\pi$ be a fine ladder and assume $\boldsymbol{n}_i$'s are ordered lexicographically. Moreover, let $W \sim \text{Ber}(p)$ be independent of $ X \sim \pi(p)$ and $Y$ be the convolution of the two, that is $Y = X+W$. Then, $Y \sim \pi'(p)$, where $\pi'$ is the augmented $\pi$.
\end{remark}

Notice that given a multivariate ladder $\pi: \Delta^m \rightarrow \Delta^k$, augmenting it enough times yields a fine and connected ladder.	

\begin{prop}
\label{prop:multivariate_impose_connected_fineness_condition}
Let $\pi: \Delta^m \rightarrow \Delta^k$ be a multivariate
ladder of degree~$d$. Augment $\pi$ $d$ times to construct $\pi':
\Delta^m  \rightarrow \Delta^w$, where $w < \min\{(k+1)(m+1)^d, {{2d+m}\choose{m}}\}$. Then $\pi'$ is a fine and connected ladder and sampling from $\pi'$ is equivalent to sampling from $\pi$. 
\end{prop}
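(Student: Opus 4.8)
The plan is to prove the two structural claims — fineness and connectedness — separately, and then invoke the already-established fact that each individual augmentation preserves equivalence of sampling (so that $d$-fold augmentation does too).

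First, \textbf{fineness.} By Definition \ref{prop:thinning_ladder}, thinning always produces a fine ladder, and augmenting ends with a thinning step; so $\pi'$ is automatically fine. This requires essentially no work beyond citing the definition, but I would note it explicitly since the bound on $w$ depends on it: after augmenting $d$ times starting from degree $d$, the final degree is $2d$, and a fine ladder in $m+1$ variables of homogeneous degree $2d$ has at most $\binom{2d+m}{m}$ states, giving $w < \binom{2d+m}{m}$; the other bound $w < (k+1)(m+1)^d$ comes from iterating the "increase the degree" map $d$ times (each application multiplies the state count by at most $m+1$) and is immediate from Definition \ref{prop:increasing_degree_ladder}.

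Second, \textbf{connectedness}, which is the substantive part. Fix two states of $\pi'$, corresponding to homogeneous monomials $\boldsymbol{p}^{\boldsymbol{a}}$ and $\boldsymbol{p}^{\boldsymbol{b}}$ with $\|\boldsymbol{a}\|_1 = \|\boldsymbol{b}\|_1 = 2d$. I want to produce a path between them through states that are all present in $\pi'$ (i.e. whose monomials appear with nonzero coefficient) and along which consecutive degree vectors differ by $\|\cdot\|_1 \le 2$. The key observation is that augmenting the degree-$d$ ladder $\pi$ exactly $d$ times multiplies every original monomial by all length-$d$ products $p_{j_1}\cdots p_{j_d}$; hence every original state $\boldsymbol{n}_i$ of $\pi$ "fans out" in $\pi'$ to the entire set of monomials $\boldsymbol{n}_i + \boldsymbol{c}$ with $\|\boldsymbol{c}\|_1 = d$, $\boldsymbol{c}\in\mathbb{N}_{\ge0}^{m+1}$. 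So the states of $\pi'$ are exactly $\{\boldsymbol{n}_i + \boldsymbol{c} : i \in \Omega,\ \|\boldsymbol{c}\|_1 = d\}$ (up to thinning identifications). Given such a target $\boldsymbol{a} = \boldsymbol{n}_i + \boldsymbol{c}$, I would first move within the "$\boldsymbol{n}_i$-block" by transferring one unit at a time between two coordinates of the $\boldsymbol{c}$-part: replacing $\boldsymbol{c}$ by $\boldsymbol{c} - e_s + e_t$ changes the degree vector by $\|{-e_s + e_t}\|_1 = 2$, and the intermediate monomial $\boldsymbol{n}_i + \boldsymbol{c} - e_s + e_t$ is still of the form $\boldsymbol{n}_i + (\text{length-}d\text{ vector})$, hence still a state of $\pi'$. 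This shows every state of $\pi'$ in a given block is connected to the "canonical" representative $\boldsymbol{n}_i + d\cdot e_0$ of that block. It then remains to connect the canonical representatives of different blocks; here I would use that increasing the degree of $\pi$ preserves connectedness of the underlying state graph (as remarked after Definitions \ref{prop:increasing_degree_ladder} and \ref{prop:thinning_ladder}), so the blocks inherit the connectivity of $\pi$ viewed as a graph — but $\pi$ as given need only be a ladder, not a connected one, so this is exactly where the $d$-fold (rather than fewer) augmentation is needed.

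That last point is the \textbf{main obstacle}: if $\pi$ is disconnected as a graph (two monomials whose degree vectors are $\ell_1$-distance $> 2$ apart with nothing in between), I must exhibit a path in $\pi'$ linking them, using only the extra "padding" variables introduced by augmenting. The mechanism is that after increasing the degree $d$ times, any two degree-$d$ monomials $\boldsymbol{n}_i$ and $\boldsymbol{n}_j$ of $\pi$ both spawn, in $\pi'$, the full simplex of degree-$2d$ monomials that dominate them coordinatewise by a length-$d$ correction; and since $\|\boldsymbol{n}_i\|_1 = \|\boldsymbol{n}_j\|_1 = d$, one can interpolate between $\boldsymbol{n}_i$ and $\boldsymbol{n}_j$ by a sequence of length-$d$ vectors, each step moving a single unit, staying inside the reachable set of \emph{some} state at every stage — concretely, walk from $2\boldsymbol{n}_i$ (which dominates $\boldsymbol{n}_i$, a state of $\pi$) to $\boldsymbol{n}_i + \boldsymbol{n}_j$ (which dominates both) to $2\boldsymbol{n}_j$, transferring one unit per step; each intermediate vector has $\ell_1$-norm $2d$ and dominates either $\boldsymbol{n}_i$ or $\boldsymbol{n}_j$, hence is a genuine state of $\pi'$, and consecutive vectors differ by $\|e_t - e_s\|_1 = 2$. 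Checking that every vector along this walk really arises from \emph{some} state of the original $\pi$ after the augmentation bookkeeping — i.e. that it coordinatewise-dominates $\boldsymbol{n}_i$ or $\boldsymbol{n}_j$ by a vector of $\ell_1$-norm exactly $d$ — is the routine but careful verification I would carry out in detail; the degree count ($d + d = 2d$) is what makes it close. Finally, sampling-equivalence of $\pi'$ and $\pi$ follows by composing the $d$ disaggregation/aggregation correspondences from the remarks after Definitions \ref{prop:increasing_degree_ladder} and \ref{prop:thinning_ladder}.
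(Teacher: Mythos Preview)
Your proposal is correct and follows essentially the same approach as the paper. Both arguments partition the states of $\pi'$ into blocks $A_i = \{\boldsymbol{n}_i + \boldsymbol{c} : \|\boldsymbol{c}\|_1 = d\}$, observe intra-block connectedness via unit transfers in $\boldsymbol{c}$, and then link any two blocks via the shared monomial $\boldsymbol{n}_i + \boldsymbol{n}_j$; the paper simply states $A_i \cap A_j \neq \emptyset$ at that point, whereas you spell out the explicit walk $2\boldsymbol{n}_i \to \boldsymbol{n}_i + \boldsymbol{n}_j \to 2\boldsymbol{n}_j$, which is redundant once intra-block connectedness is in hand but not wrong.
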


In practice, it may be enough to augment the ladder $\pi$ less than $d$ times to produce a fine and connected ladder $\pi'$.

\subsubsection{Univariate fine and connected ladder over $\mathbb{R}$}\label{def:univariate_connected_fine_ladder}

Consider the special case of fine and connected ladders where $m=1$, which will be of particular interest for the monotone CFTP implementation. We shall call such ladders univariate and denote $p_0 = 1-p$ and $p_1 = p$.  The condition that $C(p)$ has no roots in $[0,1]$ implies $d=k$ and the probabilities take the form of
\begin{equation}
\label{eq:univariate_connected_fine_ladder}
\pi_i(p) = R_i \frac{p^{i}(1-p)^{k-i}}{C(p)}, \qquad i = 0, \ldots, k.
\end{equation}
This case corresponds to having access to a $p$-coin and simulating a $(k+1)$-sided die, so that the classic Bernoulli Factory setting falls in this scenario.

\section{A dice enterprise for rational functions}
\label{sec:dice_enterprise}

We now tackle the problem of designing an algorithm that given a die where the probability $\boldsymbol{p} \in \Delta^m$ of rolling each face is unknown, produces rolls of a  die (possibly having a different number of faces $v$) where the probability associated to each face is $f(p)$, i.e. given by a rational function $f: \Delta^m \rightarrow \Delta^v$. We first show how to decompose a rational function $f$ into a fine and connected ladder $\pi: \Delta^m \rightarrow \Delta^k$ such that sampling from $f(\boldsymbol{p})$ is equivalent to sampling from $\pi(\boldsymbol{p})$. Then, we detail how to construct a Markov chain that admits such $\pi(\boldsymbol{p})$ as its stationary distribution. Finally, we apply CFTP perfect sampling technique to get a sample exactly from $\pi(\boldsymbol{p})$. The algorithm produces exact draws from $\pi(\boldsymbol{p})$ for any $m$, but for the case $m = 1$ the CFTP has a monotonic implementation with improved efficiency. 

\subsection{Construction of $\pi$}

Theorem \ref{thm:decomposition_multivariate} below shows that for any rational
function $f: \Delta^m \rightarrow \Delta^v$, a fine and connected ladder $\pi: \Delta^m \rightarrow \Delta^k$ can be constructed, such that sampling from $f$ is equivalent to sampling from $\pi$. The construction is explicit and among others, builds on the ideas of \cite{Mossel2005}. 

Roughly speaking the key steps of our proposed method are the following: 

\begin{algorithmic}[1]
\State{Let $f(\boldsymbol{p}) = (f_0(\boldsymbol{p}),\ldots,f_v(\boldsymbol{p})) = \left(\frac{D_0(\boldsymbol{p})}{E_0(\boldsymbol{p})},\ldots,\frac{D_v(\boldsymbol{p})}{E_v(\boldsymbol{p})}\right)$ be a given rational function where $D_i(\boldsymbol{p})$ and $E_i(\boldsymbol{p})$ are positive and relatively prime polynomials.}
\State{Apply Lemma \ref{lemma:decomposition} (presented in the appendix) to each $f_i(\boldsymbol{p})$, so that $f(\boldsymbol{p}) = \left(\frac{d_0(\boldsymbol{p})}{e_0(\boldsymbol{p})},\ldots,\frac{d_v(\boldsymbol{p})}{e_v(\boldsymbol{p})}\right)$ and each $d_i(\boldsymbol{p})$ and $e_i(\boldsymbol{p})$ is an homogeneous polynomial with positive coefficients.} 
\State{Rewrite $f(\boldsymbol{p})$ using a common denominator, so that \linebreak $f(\boldsymbol{p}) =
  \frac{1}{C(\boldsymbol{p})}(G_0(\boldsymbol{p}),\ldots,G_v(\boldsymbol{p}))$.}
\State{Rewrite each polynomial $G_i(\boldsymbol{p})$ as a homogeneous
  polynomial of degree~$d$. }
\State{Using Proposition \ref{prop:multivariate_impose_connected_fineness_condition}, construct a fine and connected ladder $\pi(\boldsymbol{p})$ sampling from which is equivalent to sampling from $f(\boldsymbol{p})$.}
\end{algorithmic}

Detailed construction can be found in the proof of the following theorem and is also illustrated in Example \ref{ex:toy_bf}.

\begin{thm}
\label{thm:decomposition_multivariate}
Let $f: \Delta^m \rightarrow \Delta^v$ be a probability distribution
such that every $f_i(\boldsymbol{p})$ is a rational function with real
coefficients. Then, one can explicitly construct a fine and connected
ladder $\pi: \Delta^m \rightarrow \Delta^k$ such that sampling from
$\pi$ is equivalent to sampling from $f$.
\end{thm}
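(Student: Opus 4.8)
The plan is to follow the five-step outline stated just before the theorem, turning each bullet into a genuine construction, and to rely on Proposition \ref{prop:multivariate_impose_connected_fineness_condition} for the last step so that the only real work is producing \emph{some} ladder equivalent to $f$; fineness and connectedness then come for free by augmenting.

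\textbf{Step 1: reduce to homogeneous polynomials with positive coefficients.} Write $f_i(\boldsymbol p) = D_i(\boldsymbol p)/E_i(\boldsymbol p)$ with $D_i, E_i$ relatively prime. The key lemma to invoke is Lemma \ref{lemma:decomposition}: since $f_i$ takes values in $(0,1)$ on the compact set $\bar\Delta^m$ (here one uses that $f$ maps into the \emph{open} simplex, so each $f_i$ is bounded away from $0$ and $1$, hence $D_i/E_i$ and $(E_i-D_i)/E_i$ are strictly positive on $\bar\Delta^m$), Polya's theorem on homogeneous positive forms applies: multiplying numerator and denominator by a high enough power of $p_0+\dots+p_m = 1$ and clearing signs yields a representation $f_i(\boldsymbol p) = d_i(\boldsymbol p)/e_i(\boldsymbol p)$ with $d_i$, $e_i$ homogeneous of the same degree and with strictly positive coefficients. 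I would state precisely what Lemma \ref{lemma:decomposition} gives and cite it; the Polya-theorem input is exactly the point where the ideas of \cite{Mossel2005} enter.

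\textbf{Steps 2--4: common denominator and homogenization.} Put the $v+1$ fractions over a common denominator $C(\boldsymbol p) := \mathrm{lcm}$ (or simply the product) of the $e_i$, so $f(\boldsymbol p) = C(\boldsymbol p)^{-1}(G_0(\boldsymbol p),\dots,G_v(\boldsymbol p))$ with each $G_i$ a polynomial with positive coefficients; note $C$ has positive coefficients and hence no root in $\bar\Delta^m$, which is the defining requirement on the denominator of a ladder. Because $\sum_i f_i = 1$ we get $\sum_i G_i = C$, so on $\bar\Delta^m$ we may freely multiply any $G_i$ by $p_0+\dots+p_m=1$; doing this I make every $G_i$ homogeneous of one common degree $d := \deg C$ (the top-degree part is automatically homogeneous, and lower-degree terms get homogenized by the multiplication). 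Now expand each $G_i$ into monomials: $G_i(\boldsymbol p) = \sum_{\boldsymbol n} c_{i,\boldsymbol n}\, \boldsymbol p^{\boldsymbol n}$ with $c_{i,\boldsymbol n}>0$ and $\|\boldsymbol n\|_1 = d$. Collect \emph{all} the monomial terms appearing across all $i$ into a single list, indexed by $l \in \{0,\dots,k\}$, set $R_l := c_{i,\boldsymbol n}$ and $\boldsymbol n_l := \boldsymbol n$ for the $l$-th term, and define $\pi_l(\boldsymbol p) := R_l \boldsymbol p^{\boldsymbol n_l}/C(\boldsymbol p)$. Since $\sum_l R_l \boldsymbol p^{\boldsymbol n_l} = \sum_i G_i(\boldsymbol p) = C(\boldsymbol p)$, this $\pi$ is a probability distribution on $\{0,\dots,k\}$, and it is a multivariate ladder over $\mathbb R$ by Definition \ref{def:multivariate_ladder}. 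The grouping $A_i := \{l : \text{the $l$-th term came from } G_i\}$ exhibits $\pi$ as a disaggregation of $f$, so sampling from $\pi$ is equivalent to sampling from $f$ via the aggregation/disaggregation correspondence \eqref{eq:disaggregation}.

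\textbf{Step 5: impose fineness and connectedness.} The ladder $\pi$ just built need be neither fine (the same monomial may occur in several $G_i$) nor connected. Apply Proposition \ref{prop:multivariate_impose_connected_fineness_condition}: augmenting $\pi$ $d$ times (in fact possibly fewer, as remarked after the proposition) produces a fine and connected ladder $\pi'$ with sampling from $\pi'$ equivalent to sampling from $\pi$, hence to sampling from $f$. This completes the construction. The main obstacle, and the part that deserves the most care, is Step 1: verifying the hypotheses of Lemma \ref{lemma:decomposition}/Polya's theorem, namely that relative primality of $D_i,E_i$ together with $f$ mapping into the \emph{open} target simplex forces strict positivity of $D_i$ and $E_i - D_i$ on the closed simplex $\bar\Delta^m$ — without this the homogenization with positive coefficients fails. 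Everything after that is bookkeeping: common denominators, homogenization via multiplication by $\sum_j p_j = 1$, and invoking the already-established Proposition on augmentation.
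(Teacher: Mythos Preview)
Your proposal is correct and follows essentially the same route as the paper: apply Lemma \ref{lemma:decomposition} to each $f_i$, pass to a common denominator $C$, homogenize the numerators to a common degree, list all monomial terms to obtain a multivariate ladder that is a disaggregation of $f$, and finally invoke Proposition \ref{prop:multivariate_impose_connected_fineness_condition} to make it fine and connected. One small caution: your assertion that $D_i$ and $E_i-D_i$ are strictly positive on the \emph{closed} simplex $\bar\Delta^m$ can fail (take $f_0(p)=p$ on $\Delta^1$), but this does not matter since Lemma \ref{lemma:decomposition} as stated only needs $f_i\in(0,1)$ on the open simplex $\Delta^m$, which is exactly the hypothesis you have.
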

Notice that we assume that $f(\boldsymbol{p}) \in \Delta^v$ for every $\boldsymbol{p} \in \Delta^m$. This rules out functions such as $f(p) = \min(2p,1)$ - as expected since a Bernoulli Factory for such function  cannot be constructed \cite{Keane1994}.

\subsection{Construction of the Markov chain}

Let $\pi: \Delta^m \rightarrow \Delta^k$ be a fine and connected
ladder. We now consider the problem of designing a Markov chain that admits it as its stationary distribution. The main idea behind the construction is depicted in Figure \ref{fig:multivariate_MC}: a roll of the die determines the
possible directions for the next move. We then draw a Uniform r.v. and decide whether the chain stays still or moves to a specific state. We can then write the off diagonal entries of the transition matrix $P$ of the chain as an entrywise product of $V$ and $W$, i.e.
\begin{equation}
\label{eq:transition_matrix_product}
P_{i,j} = \begin{cases}
V_{i,j} \cdot W_{i,j} &\quad \text{if } i \neq j \\
1-\sum_{h\neq i} P_{i,h} &\quad \text{if } i = j
\end{cases}
\end{equation}
where $V$ is a matrix of real numbers in $[0,1]$ and $W$ is a matrix where the off diagonal elements are either null or equal to $p_b$ for some $b \in \{0,\ldots,m\}$.

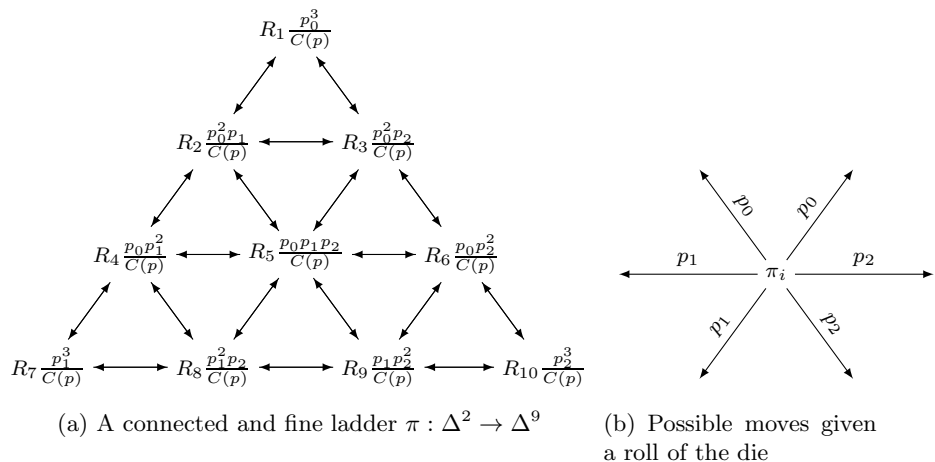
\begin{figure}[t]
    \centering
    \begin{subfigure}[t]{0.65\textwidth}
\begin{tikzpicture}
%

\edef\rows{3} 

\foreach \n in {0,...,\rows} {
  \foreach \k in {0,...,\n} {
   \pgfmathtruncatemacro{\p}{\rows-\n}
   \pgfmathtruncatemacro{\q}{\n-\k}
   \pgfmathtruncatemacro{\r}{\k}
   \pgfmathtruncatemacro{\counter}{(\n)*(\n+1)/2+\k+1} 
   
   \ifnum\p=1
   		\def\pw{}
   \else
   		\def\pw{\p}
   \fi
   \ifnum\q=1
   		\def\qw{}
   \else
   		\def\qw{\q}
   \fi
   \ifnum\r=1
   		\def\rw{}
   \else
   		\def\rw{\r}
   \fi
   
	\ifnum\p=0
		\ifnum\q=0
			\node (p_\n_\k) at (2.2*\k-2.2*\n/2,-\n*1.5) {$R_{\counter}\frac{p_2^{\rw}}{C(p)}$};
		\else
			\ifnum\r=0
				\node (p_\n_\k) at (2.2*\k-2.2*\n/2,-\n*1.5) {$R_{\counter}\frac{p_1^{\qw}}{C(p)}$};
			\else
				\node (p_\n_\k) at (2.2*\k-2.2*\n/2,-\n*1.5) {$R_{\counter}\frac{p_1^{\qw}p_2^{\rw}}{C(p)}$};
			\fi
		\fi
	\else
		\ifnum\q=0
			\ifnum\r=0
				\node (p_\n_\k) at (2.2*\k-2.2*\n/2,-\n*1.5) {$R_{\counter}\frac{p_0^{\pw}}{C(p)}$};
			\else
				\node (p_\n_\k) at (2.2*\k-2.2*\n/2,-\n*1.5) {$R_{\counter}\frac{p_0^{\pw}p_2^{\rw}}{C(p)}$};
			\fi
		\else
			\ifnum\r=0
				\node (p_\n_\k) at (2.2*\k-2.2*\n/2,-\n*1.5) {$R_{\counter}\frac{p_0^{\pw}p_1^{\qw}}{C(p)}$};
			\else
				\node (p_\n_\k) at (2.2*\k-2.2*\n/2,-\n*1.5) {$R_{\counter}\frac{p_0^{\pw}p_1^{\qw}p_2^{\rw}}{C(p)}$};
			\fi
		\fi
	\fi
   
  }
}

\foreach \n in {0,...,\rows} {
  \foreach \k in {0,...,\n} {
    \pgfmathtruncatemacro{\nm}{\n-1}
    \pgfmathtruncatemacro{\np}{\n+1}
    \pgfmathtruncatemacro{\km}{\k-1}
    \pgfmathtruncatemacro{\kp}{\k+1}
    \pgfmathtruncatemacro{\rowsp}{\rows+1}
	\ifnum\nm>-1
	\ifnum\nm>\km
		\draw[-latex] (p_\n_\k) -- (p_\nm_\k);
	\fi
	\fi
	\ifnum\nm>-1
	\ifnum\km>-1
		\draw[-latex] (p_\n_\k) -- (p_\nm_\km);
	\fi
	\fi
	
	\ifnum\np<\rowsp
		\draw[-latex] (p_\n_\k) -- (p_\np_\k);
		\draw[-latex] (p_\n_\k) -- (p_\np_\kp);
	\fi
	
	\ifnum\kp<\np
		\draw[-latex] (p_\n_\k) -- (p_\n_\kp);
	\fi
	\ifnum\km>-1
		\draw[-latex] (p_\n_\k) -- (p_\n_\km);
	\fi

  }
}

\end{tikzpicture}
        \caption{A connected and fine ladder $\pi: \Delta^2 \rightarrow \Delta^{9}$}
        \label{fig:multivariate_MC_4}
    \end{subfigure}
    \begin{subfigure}[t]{0.3\textwidth}
        \begin{tikzpicture}
\edef\rows{2}
\node (p_1_0) at (2.2*0-2.2*1/2,-1*1.5) {};
\node (p_1_1) at (2.2*1-2.2*1/2,-1*1.5) {};
\node (p_2_0) at (2.2*0-2.2*2/2,-2*1.5) {};
\node (p_2_1) at (2.2*1-2.2*2/2,-2*1.5) {$\pi_i$};
\node (p_2_2) at (2.2*2-2.2*2/2,-2*1.5) {};
\node (p_3_1) at (2.2*1-2.2*3/2,-3*1.5) {};
\node (p_3_2) at (2.2*2-2.2*3/2,-3*1.5) {};

\draw[latex-] (p_1_0) to node[sloped, anchor=center, above]{$p_0$}(p_2_1);
\draw[latex-] (p_1_1) to node[sloped, anchor=center, above]{$p_0$}(p_2_1);
\draw[latex-] (p_2_0) to node[sloped, anchor=center, above]{$p_1$}(p_2_1);
\draw[-latex] (p_2_1) to node[sloped, anchor=center, above]{$p_2$}(p_2_2);
\draw[latex-] (p_3_1) to node[sloped, anchor=center, above]{$p_1$}(p_2_1);
\draw[latex-] (p_3_2) to node[sloped, anchor=center, above]{$p_2$}(p_2_1);

\end{tikzpicture}
        \caption{Possible moves given a roll of the die}
        \label{fig:multivariate_MC_moves}
    \end{subfigure}
    \caption{Markov chain structure for a multivariate ladder}\label{fig:multivariate_MC}
\end{figure}

It is convenient to introduce the following definition of neighbourhood that, once the $(m+1)$-sided die has been rolled, details where the chain may move.

\begin{defi}
Let $\pi: \Delta^{m} \rightarrow \Delta^{k}$ be a fine and connected
multivariate ladder. Let $b \in \{0,\ldots,m\}$ and let $\boldsymbol{e}_b \in
\bar{\Delta}^m$ be the $b$\textsuperscript{th} standard unit vector. For each $i \in \Omega = \{0,\ldots,k\}$
define the neighbourhood of $i$ in the direction of $b$ as 
\begin{equation}
\label{eq:neighbourhood_rolled}
\mathcal{N}_b(i) = \{j \in \Omega \setminus \{i\}: \norm{\boldsymbol{n}_i-\boldsymbol{n}_j+\boldsymbol{e}_b}_1 = 1\}.
\end{equation}
We will also denote 
\begin{equation}
\label{eq:neighbourhood_coefficients}
\mathcal{S}_b(i) = \sum_{j \in \mathcal{N}_b(i)} R_j.
\end{equation}
\end{defi}

Unlike $\mathcal{N}(i)$ (cf. Definition \ref{def:neighbourhood_ladder}), $\mathcal{N}_b(i)$ may be empty and 
\begin{equation*}
\mathcal{N}(i) = \bigcup_{b \in \{0,\ldots,m\}} \mathcal{N}_b(i).
\end{equation*}

We can now set the 
elements of the matrix $W$ in \eqref{eq:transition_matrix_product} as
\begin{equation}
\label{eq:transition_matrix_W}
W_{i,j} = \begin{cases}
p_b &\quad \text{if } j \in \mathcal{N}_b(i) \\
0 &\quad \text{if } j \not\in \mathcal{N}(i).
\end{cases}
\end{equation}

We are left to define the matrix $V$ in eq. \eqref{eq:transition_matrix_product}. In principle, to speed up the convergence of CFTP it is good practice to reduce the
mixing time of the chain \cite{Propp1996}. A related and more
operational criterion is that of Peskun ordering \cite{Peskun1973}:

\begin{defi}
Given two reversible Markov chains with the same stationary distribution $\pi$ and with transition matrices $P$ and $Q$, we say that $Q$ dominates $P$ in Peskun sense, and write $Q \succeq_P P,$ if each of the off diagonal elements of $Q$ are greater or equal to the corresponding elements of $P$.
\end{defi}

If  $Q \succeq_P P$ then, for any $\pi$ integrable target
function $f,$ using $Q$ results in a smaller asymptotic
variance in the Markov chain CLT than using $P$. Moreover, for positive operators, if $Q \succeq_P P$ then the $Q$-chain converges in total variation more rapidly towards the stationary distribution \cite{Mira2001}. Consequently, Peskun ordering represents a valuable tool to assess our choice of the transition matrix.

The requirement of having a reversible Markov chain leads to a natural choice for the off-diagonal entries of the matrix $V$:
\begin{equation}
\label{eq:transition_probs_ladder_multivariate_suboptimal}
V_{i,j} =
\dfrac{R_j}{\mathcal{S}_b(i) \vee \mathcal{S}_c(j)} \qquad \text{if } j \in \mathcal{N}_b(i) \text{ and } i \in \mathcal{N}_c(j).
\end{equation}
It is easy to check that this choice produces a transition matrix $P$ that satisfies the detailed balance condition and that $\pi(\boldsymbol{p})$ is the stationary distribution of the chain. However, this choice may not be optimal in Peskun's ordering. Therefore, we propose a different construction and define the matrix $V$ of eq. n \eqref{eq:transition_matrix_product} iteratively. First, we select the state $i \in \Omega$ and the roll $b \in \{0,\ldots,m\}$ that maximises $\mathcal{S}_b(i)$; i.e. we identify the pair of state and die face from which it is easiest for the chain to move away from.  In particular,  for each $j \in \mathcal{N}_b(i)$ we assign $V_{i,j} = \frac{R_j}{\mathcal{S}_b(i)}$. Since the construction shall yield a reversible Markov chain, we also set $V_{j,i} = \frac{R_i}{\mathcal{S}_b(i)}$ (notice that the denominator is purposely set equal to $\mathcal{S}_b(i)$ so that detailed balance condition is satisfied). We then proceed analogously, now identifying a new pair of state and die face for which the probability of moving out from such state (going in the direction of the given face) is maximised, taking into account the entries of $V$ that have already been fixed. The detailed procedure is described in Algorithm \ref{alg:construction_markov_chain}. We prove in Proposition \ref{prop:multivariate_markov_chain} that since at each step we ensure that the detailed balance condition holds, this leads to a valid reversible chain and $\pi(\boldsymbol{p})$ is its unique stationary distribution. Importantly, the so-constructed transition matrix $P$ is optimal in Peskun ordering. We prove this in Proposition \ref{prop:Peskun_optimal}, the key point being that since at each step we select the pair of state and die face that maximises the off-diagonal elements of $V$, any other choice would produce a matrix $P$ that is no longer a valid stochastic matrix for all $\boldsymbol{p} \in \Delta^m$. 

\begin{algorithm}[t]
	\caption{Construction of the Markov chain transition matrix}
	\label{alg:construction_markov_chain}
	\hspace*{\algorithmicindent}\justifying\textbf{Input}: A multivariate ladder $\pi: \Delta^m \rightarrow \Delta^k$ on $\Omega = \{0,\ldots,k\}$
	\\
	\hspace*{\algorithmicindent}\justifying\textbf{Output}: The matrix $V$ composing the transition kernel in equation \eqref{eq:transition_matrix_product}.
	\begin{algorithmic}[1]
		\Statex{\textit{Initialisation step}}
		\State{Initialise $V$ as a $(k+1)\times (k+1)$ null matrix}
		\State{For all $i \in \Omega$ and $b \in \{0,\ldots,m\}$ set $\mathcal{N}_b(i)$ as in eq. \eqref{eq:neighbourhood_rolled}, $\mathcal{S}_b(i)$ as in eq. \eqref{eq:neighbourhood_coefficients}, $\mathcal{W}_b(i) \gets 0$}
		\Statex{\textit{Main loop}}
		\Repeat
		\State{Set $b, i \gets \argmax_{b,i} \mathcal{S}_b(i)$} \label{alg:line_max_arg}
		\ForEach{$j \in \mathcal{N}_b(i)$}
			\Statex{\textit{Assign maximum probability of moving from state $i$ having rolled $b$}}
			\State{$V_{i,j} \gets R_j/S_b(i)$, $\mathcal{N}_b(i) \gets \mathcal{N}_b(i) \setminus \{j\}$, $\mathcal{W}_b(i) \gets \mathcal{W}_b(i)+R_j/S_b(i)$} \label{alg:line:max}
			\Statex{\textit{Assign values for the reverse move}}
			\State{Set $c$ such that $i \in \mathcal{N}_c(j)$}
			\State{$V_{j,i} \gets R_i/S_b(i)$, $\mathcal{N}_c(j) \gets \mathcal{N}_c(j) \setminus \{i\}$, $\mathcal{W}_c(j) \gets \mathcal{W}_c(j)+R_i/S_b(i)$} \label{alg:line:reverse}
			\Statex{\textit{Update $S_c(j)$ to take into consideration the new value of $V_{j,i}$}}
			\State{$S_c(j) \gets \left( \sum_{h \in \mathcal{N}_c(j)} R_h \right)\left(1-\mathcal{W}_c(j)\right)^{-1}$}
		\EndFor
		\Statex{\textit{Update $\mathcal{S}_b(i)$}}
		\State{$\mathcal{S}_b(i) \gets 0$}
		\Until{$\mathcal{N}_b(i) = \emptyset, \forall i \in \Omega, b \in \{0,\ldots,m\}$}
	\end{algorithmic}
\end{algorithm}

\begin{prop}
\label{prop:multivariate_markov_chain}
Let $\pi: \Delta^m \rightarrow \Delta^k$ be a fine and connected
ladder. Consider a discrete-time Markov chain $(X_t)_{t \in
  \mathbb{N}}$ on $\Omega = \{0,\ldots,k\}$. Let $P$ as in \eqref{eq:transition_matrix_product} be the transition matrix of the chain, where $W$ is as in \eqref{eq:transition_matrix_W} and $V$ is the matrix output by Algorithm \ref{alg:construction_markov_chain}. Then, $(X_t)_{t \in \mathbb{N}}$ is a
time-reversible Markov chain that admits $\pi(\boldsymbol{p})$ as its unique stationary distribution.
\end{prop}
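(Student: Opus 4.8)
The plan is to verify the three required properties in order: (i) the matrix $P$ constructed via Algorithm~\ref{alg:construction_markov_chain} is a bona fide stochastic matrix for all $\boldsymbol{p} \in \Delta^m$; (ii) the detailed balance equations $\pi_i(\boldsymbol{p}) P_{i,j} = \pi_j(\boldsymbol{p}) P_{j,i}$ hold for all $i,j$ and all $\boldsymbol{p}$, so the chain is reversible with stationary measure $\pi(\boldsymbol{p})$; and (iii) the chain is irreducible, so the stationary distribution is unique. For (iii) I would invoke the \emph{connectedness} of $\pi$: by Definition~\ref{def:multivariate_ladder} any two states are joined by a path of steps of $\ell_1$-distance at most $2$, and by Definition~\ref{def:neighbourhood_ladder} combined with \eqref{eq:neighbourhood_rolled} each such step corresponds to some $j \in \mathcal{N}_b(i)$ for a suitable direction $b$; since Algorithm~\ref{alg:construction_markov_chain} assigns a strictly positive $V_{i,j} = R_j/\mathcal{S}_b(i) > 0$ (all $R_j > 0$) and $W_{i,j} = p_b > 0$ on $\Delta^m$, every such edge carries positive transition probability, hence the chain is irreducible on $\Omega$.

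For (ii), the key observation is structural: the entry $V_{i,j}$ is assigned exactly once (when the pair $(i,j)$ is processed, either as a ``forward'' move from the selected $(b,i)$ or as the paired ``reverse'' move), and at that moment the algorithm sets $V_{i,j} = R_j/\mathcal{S}_b(i)$ and $V_{j,i} = R_i/\mathcal{S}_b(i)$ with a \emph{common} denominator $\mathcal{S}_b(i)$ (the value of that quantity at the time of processing). Recalling $W_{i,j} = p_b$ and $W_{j,i} = p_c$ where $j \in \mathcal{N}_b(i)$ and $i \in \mathcal{N}_c(j)$, and that $j \in \mathcal{N}_b(i)$ means $\boldsymbol{n}_j = \boldsymbol{n}_i + \boldsymbol{e}_b - \boldsymbol{e}_c$ for exactly these $b,c$ (from $\norm{\boldsymbol{n}_i - \boldsymbol{n}_j + \boldsymbol{e}_b}_1 = 1$ together with $\norm{\boldsymbol{n}_i}_1 = \norm{\boldsymbol{n}_j}_1 = d$), one computes
\begin{equation*}
\pi_i(\boldsymbol{p}) P_{i,j} = \frac{R_i \boldsymbol{p}^{\boldsymbol{n}_i}}{C(\boldsymbol{p})} \cdot \frac{R_j}{\mathcal{S}_b(i)} \, p_b = \frac{R_i R_j \, \boldsymbol{p}^{\boldsymbol{n}_i + \boldsymbol{e}_b}}{C(\boldsymbol{p})\,\mathcal{S}_b(i)} = \frac{R_j R_i \, \boldsymbol{p}^{\boldsymbol{n}_j + \boldsymbol{e}_c}}{C(\boldsymbol{p})\,\mathcal{S}_b(i)} = \pi_j(\boldsymbol{p}) P_{j,i},
\end{equation*}
since $\boldsymbol{n}_i + \boldsymbol{e}_b = \boldsymbol{n}_j + \boldsymbol{e}_c$. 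Summing over $j$ then gives $\pi(\boldsymbol{p}) P = \pi(\boldsymbol{p})$.

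For (i), I would argue that the loop in Algorithm~\ref{alg:construction_markov_chain} terminates (each iteration of the inner \textbf{for each} removes an element from some $\mathcal{N}_b(i)$, and these sets are finite) and that each row sum stays in $[0,1]$. The bookkeeping variables $\mathcal{W}_b(i)$ track the probability mass already committed to moving from $i$ in direction $b$, and after the update $\mathcal{S}_c(j) \gets \big(\sum_{h \in \mathcal{N}_c(j)} R_h\big)\big(1 - \mathcal{W}_c(j)\big)^{-1}$ the quantity $\mathcal{S}_b(i) W_{i,\cdot}$-weighted row sum is maintained so that committing $V_{i,j} W_{i,j}$ never pushes $\sum_{h\neq i} P_{i,h}$ above $1$; here the \textbf{argmax} choice in line~\ref{alg:line_max_arg} is what guarantees the denominators stay large enough, i.e.\ that $\mathcal{W}_b(i) \le 1$ throughout. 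The main obstacle I anticipate is exactly this last point: carefully setting up the loop invariant (something like ``for every unprocessed $(b,i)$, $\sum_{b'} p_{b'} \big(\mathcal{W}_{b'}(i) + \mathbbm{1}[b'=b]\,\mathcal{S}_b(i)/(\text{current denom})\,\cdot(\text{remaining mass})\big) \le 1$ for all $\boldsymbol{p} \in \Delta^m$'') and checking it is preserved by both the forward assignment and the reverse assignment, together with the $\mathcal{S}_c(j)$ update. The greedy max-selection is precisely what makes the invariant sustainable, and this is also the content that feeds into the Peskun-optimality claim of Proposition~\ref{prop:Peskun_optimal}, so I would phrase the bound uniformly enough to be reused there. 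Everything else — reversibility, stationarity, irreducibility — is then routine given this invariant.
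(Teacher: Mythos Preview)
Your proposal is correct and follows essentially the same route as the paper: verify stochasticity of $P$, then detailed balance (via the common denominator $\mathcal{S}_b(i)$ in the paired assignments $V_{i,j}=R_j/\mathcal{S}_b(i)$, $V_{j,i}=R_i/\mathcal{S}_b(i)$), then irreducibility from connectedness. Parts (ii) and (iii) match the paper almost verbatim.

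The only noteworthy difference is in the execution of (i). You propose maintaining a loop invariant of the form ``$\sum_{b'} p_{b'}(\ldots)\le 1$ for all $\boldsymbol{p}$'' and worry about checking it across forward and reverse assignments. The paper sidesteps this machinery with a cleaner post-hoc case split on a fixed pair $(b,i)$: either $(b,i)$ is at some point selected by the $\argmax$ in line~\ref{alg:line_max_arg}, in which case after that iteration one computes directly $\mathcal{W}_b(i)=\mathcal{W}_b(i)+\sum_{j\in\mathcal{N}_b(i)}R_j/\mathcal{S}_b(i)=1$ using the update rule $\mathcal{S}_b(i)=\big(\sum_{h\in\mathcal{N}_b(i)}R_h\big)/(1-\mathcal{W}_b(i))$; or every $V_{i,j}$ with $j\in\mathcal{N}_b(i)$ is assigned as a reverse move from some $(d,j)$ that was argmax-selected, in which case $\mathcal{S}_d(j)\ge\mathcal{S}_b(i)$ at that moment, so the increment $R_i/\mathcal{S}_d(j)\le R_i/\mathcal{S}_b(i)$ keeps $\mathcal{W}_b(i)\le 1$. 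Either way $\sum_{j\in\mathcal{N}_b(i)}V_{i,j}=\mathcal{W}_b(i)\le 1$, and then $\sum_{j\neq i}P_{i,j}=\sum_b p_b\sum_{j\in\mathcal{N}_b(i)}V_{i,j}\le\sum_b p_b=1$. This is exactly the mechanism you identify (``argmax keeps the denominators large enough''), just organised so that no running invariant needs to be tracked.
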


\begin{prop}
\label{prop:Peskun_optimal} 
Let $\pi: \Delta^m \rightarrow \Delta^k$ be a fine and connected
ladder. Consider the Markov chain defined in Proposition
\ref{prop:multivariate_markov_chain}. Then, there does not exist a
reversible Markov chain with the same adjacency structure and
stationary distribution that dominates it in the Peskun sense.
\end{prop}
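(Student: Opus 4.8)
The plan is to linearise the problem: encode every admissible competitor by a vector of symmetric edge weights, observe that admissibility carves out a polytope, and then show that Algorithm \ref{alg:construction_markov_chain} lands on a Pareto-maximal point of it. Restrict attention to chains of the form \eqref{eq:transition_matrix_product} with $W$ fixed as in \eqref{eq:transition_matrix_W} (this is what ``the same adjacency structure'' means here). For $j\in\mathcal N_b(i)$ one has $\boldsymbol n_j=\boldsymbol n_i+\boldsymbol e_b-\boldsymbol e_c$ with $c\neq b$ and $i\in\mathcal N_c(j)$, so substituting \eqref{eq:multivariate_ladder} into the detailed balance relation $\pi_i(\boldsymbol p)P_{i,j}=\pi_j(\boldsymbol p)P_{j,i}$ and cancelling the common monomial and the factor $p_b$ reduces reversibility with stationary law $\pi$ to $R_iV_{i,j}=R_jV_{j,i}$. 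Hence any such chain is encoded by $u_{\{i,j\}}:=V_{i,j}/R_j=V_{j,i}/R_i\ge 0$, and the requirement that the transition matrix be stochastic for every $\boldsymbol p\in\Delta^m$ — the off-diagonal row sums being linear in $\boldsymbol p$, hence maximised at a vertex $\boldsymbol e_b$ of the simplex — is exactly the linear system $\sum_{j\in\mathcal N_b(i)}R_j\,u_{\{i,j\}}\le 1$ for all $i,b$. Call $\mathcal F$ this polytope. By Proposition \ref{prop:multivariate_markov_chain} the matrix $V$ output by the algorithm corresponds to a point $u^\star\in\mathcal F$, and since $p_b>0$ on the simplex a competitor with encoding $u'$ dominates the constructed chain in Peskun's sense iff $u'\ge u^\star$ coordinatewise. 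So it suffices to prove that $u^\star$ is Pareto-maximal in $\mathcal F$, i.e.\ that $u\in\mathcal F$ with $u\ge u^\star$ forces $u=u^\star$.

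Next I would read off from the algorithm that every coordinate of $u^\star$ sits on a tight constraint. Tracking the bookkeeping: at any stage $\mathcal N_b(i)$ is the set of unprocessed neighbours of $i$ in direction $b$, $\mathcal W_b(i)=\sum_{j\text{ processed}}R_j\,u^\star_{\{i,j\}}\in[0,1)$, and the invariant $\mathcal S_b(i)=\bigl(\sum_{j\in\mathcal N_b(i)}R_j\bigr)\bigl(1-\mathcal W_b(i)\bigr)^{-1}$ is maintained by the update rules — this is precisely the content of the validity part of Proposition \ref{prop:multivariate_markov_chain}, which I would cite rather than re-derive. When line \ref{alg:line_max_arg} selects a pair $(b,i)$ with $\mathcal N_b(i)\neq\emptyset$, each remaining edge is assigned $u^\star_{\{i,j\}}=1/\mathcal S_b(i)$, so that $\mathcal W_b(i)+\sum_{j\in\mathcal N_b(i)}R_j\,u^\star_{\{i,j\}}=1$; the entries in that direction are never revisited, so the constraint $(b,i)$ of $\mathcal F$ is tight at $u^\star$ and remains so. Since an edge $\{i,j\}$ leaves both of its neighbourhood lists exactly when the first of its two pairs is selected, and the loop runs until all lists are empty, every edge is incident to some pair whose constraint is tight at $u^\star$.

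Finally I would close the argument by induction over the order in which pairs are processed. Suppose $u\in\mathcal F$, $u\ge u^\star$, and $u$ already agrees with $u^\star$ on all edges handled before step $s$; at step $s$ a pair $(b,i)$ with nonempty current list is processed, and by the induction hypothesis the already-fixed part of its row sum evaluated at $u$ equals $\mathcal W_b(i)$, so tightness of constraint $(b,i)$ at $u^\star$ yields $\mathcal W_b(i)+\sum_{j\in\mathcal N_b(i)}R_j\,u_{\{i,j\}}\le 1=\mathcal W_b(i)+\sum_{j\in\mathcal N_b(i)}R_j\,u^\star_{\{i,j\}}$; positivity of the $R_j$ together with $u_{\{i,j\}}\ge u^\star_{\{i,j\}}$ forces equality term by term, so $u$ agrees with $u^\star$ on all edges handled at step $s$ too. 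Since every edge is processed at some step, $u=u^\star$, hence $P'=P$, which is the claim.

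The only genuinely non-routine ingredient is the invariant linking the dynamically updated $\mathcal S_b(i)$ to the static constraint $\sum_{j\in\mathcal N_b(i)}R_j\,u_{\{i,j\}}\le 1$ and the fact that $\mathcal W_b(i)$ never reaches $1$ (otherwise the ``tightness on processing'' step fails or becomes vacuous); I expect to lean on Proposition \ref{prop:multivariate_markov_chain} for this. Everything else — the parametrisation, the evaluation of the stochasticity constraint at the simplex vertices, and the greedy rigidity induction — is short once the problem has been phrased as Pareto-maximality inside the polytope $\mathcal F$.
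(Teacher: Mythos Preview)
Your argument is correct and rests on exactly the observation the paper exploits: whenever Algorithm \ref{alg:construction_markov_chain} selects a pair $(b,i)$, the directional row constraint $\sum_{j\in\mathcal N_b(i)}V_{i,j}\le 1$ is saturated, so any coordinatewise increase of $V$ forces a violation as $p_b\to 1$. The paper reaches the same conclusion without the polytope language or the induction over processing order: it picks any entry with $\tilde V_{i,j}>V_{i,j}$, uses reversibility (your relation $R_iV_{i,j}=R_jV_{j,i}$) to reduce to the case where that entry was assigned in line \ref{alg:line:max}, and then reads off the contradiction from the tight constraint directly. Same content; your Pareto-maximality framing is more structural, the paper's two-case contradiction is shorter.
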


\begin{ex}
\label{ex:multivariate}
Consider the multivariate ladder 
\begin{equation*}
\pi(p_0,p_1,p_2) \propto \left(\underbrace{\sqrt{2}p_0^3}_{\pi_0},\underbrace{p_0^2p_2}_{\pi_1},\underbrace{\frac{1}{4}p_0p_1^2}_{\pi_2},\underbrace{2p_0p_1p_2}_{\pi_3},\underbrace{\frac{1}{2}p_0p_2^2}_{\pi_4},\underbrace{\frac{3}{4}p_1^2p_2}_{\pi_5}\right).
\end{equation*}
We can graphically represent the ladder as

\begin{figure}[H]
\centering
\begin{tikzpicture}
\edef\rows{2}
\node (p_0_0) at (2.2*0-2.2*0/2,-0*1.5) {$\sqrt{2}p_0^3$};
\node[opacity=.3] (p_1_0) at (2.2*0-2.2*1/2,-1*1.5) {$0$};
\node (p_1_1) at (2.2*1-2.2*1/2,-1*1.5) {$p_0^2p_2$};
\node (p_2_0) at (2.2*0-2.2*2/2,-2*1.5) {$\frac{1}{4}p_0p_1^2$};
\node (p_2_1) at (2.2*1-2.2*2/2,-2*1.5) {$2p_0p_1p_2$};
\node (p_2_2) at (2.2*2-2.2*2/2,-2*1.5) {$\frac{1}{2}p_0p_2^2$};
\node[opacity=.3] (p_3_0) at (2.2*0-2.2*3/2,-3*1.5) {$0$};
\node (p_3_1) at (2.2*1-2.2*3/2,-3*1.5) {$\frac{3}{4}p_1^2p_2$};
\node[opacity=.3] (p_3_2) at (2.2*2-2.2*3/2,-3*1.5) {$0$};
\node[opacity=.3] (p_3_3) at (2.2*3-2.2*3/2,-3*1.5) {$0$};

\draw[latex-latex, opacity=.3, dashed] (p_0_0) -- (p_1_0);
\draw[latex-latex] (p_0_0) -- (p_1_1);
\draw[latex-latex, opacity=.3, dashed] (p_1_0) -- (p_1_1);
\draw[latex-latex, opacity=.3, dashed] (p_1_0) -- (p_2_0);
\draw[latex-latex, opacity=.3, dashed] (p_1_0) -- (p_2_1);
\draw[latex-latex] (p_1_1) -- (p_2_1);
\draw[latex-latex] (p_1_1) -- (p_2_2);
\draw[latex-latex] (p_2_0) -- (p_2_1);
\draw[latex-latex] (p_2_1) -- (p_2_2);
\draw[latex-latex, opacity=.3, dashed] (p_3_0) -- (p_2_0);
\draw[latex-latex, opacity=.3, dashed] (p_3_0) -- (p_3_1);
\draw[latex-latex] (p_3_1) -- (p_2_0);
\draw[latex-latex] (p_3_1) -- (p_2_1);
\draw[latex-latex, opacity=.3, dashed] (p_3_1) -- (p_3_2);
\draw[latex-latex, opacity=.3, dashed] (p_3_2) -- (p_2_1);
\draw[latex-latex, opacity=.3, dashed] (p_3_2) -- (p_2_2);
\draw[latex-latex, opacity=.3, dashed] (p_3_2) -- (p_3_3);
\draw[latex-latex, opacity=.3, dashed] (p_3_3) -- (p_2_2);

\end{tikzpicture}
\end{figure}

\noindent The neighbourhoods of each state are
\begin{alignat*}{3}
&\mathcal{N}(0) = \{1\}, &&\quad\mathcal{N}(1) = \{0,3,4\}, &&\quad\mathcal{N}(2) = \{3,5\}, \\
&\mathcal{N}(3) = \{1,2,4,5\}, &&\quad\mathcal{N}(4) = \{1,3\}, &&\quad\mathcal{N}(5) = \{2,3\}.
\end{alignat*}
and given how we defined $\mathcal{N}_{b}(i)$ we have
\begin{alignat*}{3}
&\mathcal{N}_0(0) = \emptyset, &&\quad \mathcal{N}_1(0) = \emptyset, &&\quad \mathcal{N}_2(0) = \{1\}, \\
&\mathcal{N}_0(1) = \{0\}, &&\quad \mathcal{N}_1(1) = \{3\}, &&\quad \mathcal{N}_2(1) = \{4\}, \\
&\mathcal{N}_0(2) = \emptyset, &&\quad \mathcal{N}_1(1) = \emptyset, &&\quad \mathcal{N}_2(2) = \{3,5\}, \\
&\mathcal{N}_0(3) = \{1\}, &&\quad \mathcal{N}_1(3) = \{2,5\}, &&\quad \mathcal{N}_2(3) = \{4\}, \\
&\mathcal{N}_0(4) = \{1\}, &&\quad \mathcal{N}_1(4) = \{3\}, &&\quad \mathcal{N}_2(4) = \emptyset, \\
&\mathcal{N}_0(5) = \{2,3\}, &&\quad \mathcal{N}_1(5) = \emptyset, &&\quad \mathcal{N}_2(5) = \emptyset.
\end{alignat*}
The transition matrix obtained through Algorithm \ref{alg:construction_markov_chain} is then equal to
%

\begin{equation*}
P = \begin{pmatrix}
\cdot & \frac{1}{\sqrt{2}}p_2 & 0 & 0 & 0 & 0 \\
p_0 & \cdot & 0 & p_1 & \frac{1}{2}p_2 & 0 \\
0 & 0 & \cdot & \frac{8}{11}p_2 & 0 & \frac{3}{11}p_2 \\
0 & \frac{1}{2}p_0 & \frac{1}{11}p_1 & \cdot & \frac{15}{44}p_2 & \frac{1}{3}p_1 \\
0 & p_0 & 0 & p_1 & \cdot & 0 \\
0 & 0 & \frac{1}{11}p_0 & \frac{10}{11}p_0 & 0 & \cdot
\end{pmatrix}
\end{equation*}
where $\cdot$ represents the required quantity so that the rows sum up to 1.
\end{ex}

\subsection{Perfect sampling}
We now introduce an update function for the Markov chain defined in
Proposition \ref{prop:multivariate_markov_chain} so that CFTP is
implementable. Given the current state, a roll of the die $B$, and a draw from a uniform random variable $U$, the update function details where the chain moves next (its formal definition can be found in Appendix \ref{sec:appendix_cftp}). This motivates the choice of defining the transition matrix as the element-wise product of two matrices (cf. equation \eqref{eq:transition_matrix_product}): if the chain is in state $i$, we attempt to move to any state $j$ such that $W_{i,j} = p_B$ and reach a final decision by comparing $U$ and the values of $V_{i,j}$.  We are then able to draw samples from a multivariate
fine and connected ladder and thus solve the original problem via
Theorem~\ref{thm:decomposition_multivariate}. For the general case of
a die with more than 2 faces, the update function defined in the following
proposition is not necessarily monotonic. However, in the Bernoulli Factory
setting of $m=1$, we can define a monotonic update function for the
Markov chain as shown in Corollary
\ref{prop:markov_chain_update_function_univariate}. Notice that even when a monotonic construction is not available, CFTP can still be used in practice. As numerical examples demonstrate (cf. Examples \ref{ex:augmentin_states_faster}, \ref{ex:bernoulli_race}), if the degree of the polynomials and the numbers of faces of the given die are not too large, running times are not prohibitive.

\begin{prop}
\label{prop:multivariate_update_function}
Given a fine and connected ladder $\pi: \Delta^m \rightarrow
\Delta^k$, consider the Markov chain $(X_t)_{t \in \mathbb{N}}$ with transition matrix $P$  of the form~\eqref{eq:transition_matrix_product} and 
defined in Proposition \ref{prop:multivariate_markov_chain}. Let $B \sim \boldsymbol{p}$ and $U
\sim \text{Unif}(0,1)$ be independent
random variables. Given $i \in \Omega$ denote the elements of $\mathcal{N}_B(i)$ as $\mathcal{N}_B(i) = \{j_0,\ldots,j_{w}\}$. Define the function $\phi: \{0,\ldots,k\} \times \{0,\ldots,m\} \times [0,1] \rightarrow \{0,\ldots,k\}$ as
\begin{equation}
\label{eq:update_function_ladder_multivariate}
\phi(i,B,U) = \begin{cases}
j_0 &\quad \text{if } U \leq V_{i,j_0}, \\
j_1 &\quad \text{if } V_{i,j_0} < U \leq V_{i,j_0}+V_{i,j_1}, \\
	&\ldots \\
j_l &\quad \text{if } \sum_{h=0}^{l-1} V_{i,j_h} < U \leq \sum_{h=0}^{l} V_{i,j_h}, \\
	&\ldots \\
i	&\quad \text{otherwise.}
\end{cases}
\end{equation}
Then $\phi$ is an update function for the Markov chain $(X_t)_{t \in \mathbb{N}}$.
\end{prop}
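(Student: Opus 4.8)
The plan is to show directly that $\phi$ reproduces the one-step transition law of the chain. By the definition of an update function (Appendix \ref{sec:appendix_cftp}) it suffices to check that, for independent $B \sim \boldsymbol{p}$ and $U \sim \text{Unif}(0,1)$, one has $\mathbb{P}(\phi(i,B,U) = j) = P_{i,j}$ for all $i,j \in \Omega$, since the Markov property and the full joint law of $(X_t)_{t\in\mathbb{N}}$ then follow automatically from the independence of the driving pairs $(B_t,U_t)$. I take $V$, $W$ and $P$ as already provided by Proposition \ref{prop:multivariate_markov_chain}: $P$ has the form \eqref{eq:transition_matrix_product}, $W$ is as in \eqref{eq:transition_matrix_W}, and $P$ is a stochastic matrix for every $\boldsymbol{p} \in \Delta^m$.

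I would first record two structural facts, both consequences of $\pi$ being a fine ladder. Fix $i$. Since $\norm{\boldsymbol{n}_i}_1 = \norm{\boldsymbol{n}_j}_1 = d$, the integer vector $\boldsymbol{n}_i - \boldsymbol{n}_j + \boldsymbol{e}_b$ has coordinate-sum $1$; if in addition its $1$-norm equals $1$, then all its coordinates are nonnegative, which forces $\boldsymbol{n}_i - \boldsymbol{n}_j = \boldsymbol{e}_c - \boldsymbol{e}_b$ for some $c$, with $c \neq b$ by fineness. It follows that (i) the sets $\mathcal{N}_0(i),\ldots,\mathcal{N}_m(i)$ are pairwise disjoint, so that $W_{i,j}$ in \eqref{eq:transition_matrix_W} is unambiguously $p_b$ exactly when $j \in \mathcal{N}_b(i)$ and $0$ for $j \notin \mathcal{N}(i)$, $j\neq i$; and (ii) $j \in \mathcal{N}_b(i)$ iff $i \in \mathcal{N}_c(j)$ for that same $c$, which is what makes the reverse moves in the chain consistent. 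Together with the already-noted identity $\mathcal{N}(i) = \bigcup_b \mathcal{N}_b(i)$, this gives a partition of $\mathcal{N}(i)$ indexed by the die face $b$.

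The substance is the computation conditional on $B = b$. Enumerate $\mathcal{N}_b(i) = \{j_0,\ldots,j_w\}$ as in \eqref{eq:update_function_ladder_multivariate}. The crucial point is that $\sum_{h=0}^{w} V_{i,j_h} \le 1$: since $P$ is stochastic, $\sum_b p_b \sum_{j \in \mathcal{N}_b(i)} V_{i,j} = \sum_{h\neq i}P_{i,h} \le 1$ holds for all $\boldsymbol{p} \in \Delta^m$, and letting $\boldsymbol{p}$ tend to the vertex $\boldsymbol{e}_b$ of $\bar{\Delta}^m$ yields $\sum_{j\in\mathcal{N}_b(i)} V_{i,j} \le 1$. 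Hence $0 \le V_{i,j_0} \le V_{i,j_0}+V_{i,j_1} \le \cdots \le 1$ carves $[0,1]$ into nested subintervals with disjoint interiors, so for $U\sim\text{Unif}(0,1)$ we get $\mathbb{P}(\phi(i,b,U) = j_l) = V_{i,j_l}$ for each $l$ and $\mathbb{P}(\phi(i,b,U) = i) = 1 - \sum_{j\in\mathcal{N}_b(i)} V_{i,j}$. Averaging over $B\sim\boldsymbol{p}$ and using disjointness of the $\mathcal{N}_b(i)$ (so no $j$ is counted for two faces): for $j \in \mathcal{N}_b(i)$, $\mathbb{P}(\phi(i,B,U)=j) = p_b V_{i,j} = W_{i,j}V_{i,j} = P_{i,j}$; for $j \notin \mathcal{N}(i)\cup\{i\}$ both sides vanish; and $\mathbb{P}(\phi(i,B,U)=i) = \sum_b p_b\big(1 - \sum_{j\in\mathcal{N}_b(i)}V_{i,j}\big) = 1 - \sum_{h\neq i} P_{i,h} = P_{i,i}$, where we used $\sum_b p_b = 1$. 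This matches \eqref{eq:transition_matrix_product} in every case, so $\phi$ is an update function.

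I do not anticipate a genuine obstacle: the one delicate point is the well-definedness of the interval decomposition in \eqref{eq:update_function_ladder_multivariate}, i.e. the bound $\sum_{j\in\mathcal{N}_b(i)}V_{i,j}\le 1$, which is precisely ``$P$ is stochastic'' from Proposition \ref{prop:multivariate_markov_chain} specialised to the vertices of the simplex; everything else is bookkeeping with the neighbourhood partition.
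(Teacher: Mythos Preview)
Your proof is correct and follows essentially the same route as the paper's: fix $i$, condition on $B=b$, and read off $\mathbb{P}(\phi(i,b,U)=j_l)=V_{i,j_l}$ from the interval decomposition, then multiply by $p_b$ to recover $P_{i,j_l}$. You are simply more explicit than the paper about two points it takes for granted---the disjointness of the $\mathcal{N}_b(i)$ (from fineness) and the bound $\sum_{j\in\mathcal{N}_b(i)}V_{i,j}\le 1$ (established inside the proof of Proposition~\ref{prop:multivariate_markov_chain})---which is entirely appropriate.
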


\subsection{A special case: from coins to dice}

Assume that we are given a $p$-coin, and write $p_0
= 1-p$ and $p_1 = p$, to  consider a fine and
connected ladder of the form $\pi: (0,1) \rightarrow \Delta^k$ as in
equation \eqref{eq:univariate_connected_fine_ladder}. 
In this case the definition of neighbourhoods simplifies and so does
the Markov chain defined in Proposition
\ref{prop:multivariate_markov_chain}. Moreover, given the simplified
structure of the state space, the update function defined in
Proposition \ref{prop:multivariate_update_function} is monotonic, so
that monotonic CFTP can be employed. These observations are summarised in the following Corollary. Figure \ref{fig:univariate_ladder_MC} gives a graphical representation of the dynamics of the Markov chain.

\begin{cor}
\label{prop:markov_chain_update_function_univariate}
Let $\pi: (0,1) \rightarrow \Delta^{k}$ be a fine and connected ladder as in equation \eqref{eq:univariate_connected_fine_ladder}. The transition matrix of the Markov chain $(X_t)_{t \in \mathbb{N}}$ defined in Proposition \ref{prop:multivariate_markov_chain} can be rewritten as 
\begin{equation}
\label{eq:transition_probs_ladder_univariate}
P_{i,j} = \begin{cases}
R_{i-1} \frac{1-p}{R_{i-1} \vee R_i} &\text{if } j = i-1, j > 0, \\
1 - R_{i-1} \frac{1-p}{R_{i-1} \vee R_i} - R_{i+1} \frac{p}{R_i \vee R_{i+1}} &\text{if } j = i, \\
R_{i+1} \frac{p}{R_i \vee R_{i+1}} &\text{if } j = i+1, j < k, \\
0 &\text{otherwise.}
\end{cases}
\end{equation}
Let  $U \sim \text{Unif}(0,1)$ and $p-$coin $B$ be an independent r.v.
(i.e. $\mathbb{P}(B=1)=1-\mathbb{P}(B=0) =p$). The update function defined in Proposition \ref{prop:multivariate_update_function} can be rewritten as  
\begin{equation}
\label{eq:update_function_ladder_univariate}
\phi(i,B,U) = \begin{cases}
i-1 &\qquad \text{if } i>0, B=0, U \leq \frac{R_{i-1}}{R_{i-1} \vee R_i}, \\
i+1 &\qquad \text{if } i<k, B=1, U \leq \frac{R_{i+1}}{R_{i} \vee R_{i+1}}, \\
i &\qquad \text{otherwise.}
\end{cases}
\end{equation}
Moreover, $\phi$ is a monotonic update function for the Markov chain $(X_t)_{t \in \mathbb{N}}$ where $0$ and $k$ are the minimum and maximum states respectively.
\end{cor}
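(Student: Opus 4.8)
The plan is to specialise the general construction (Proposition~\ref{prop:multivariate_markov_chain}, Proposition~\ref{prop:multivariate_update_function}, Algorithm~\ref{alg:construction_markov_chain}) to $m=1$ and read off the three assertions.

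\emph{Collapsing the neighbourhoods.} In \eqref{eq:univariate_connected_fine_ladder} the $i$-th monomial is $(1-p)^{k-i}p^{i}$, i.e.\ $\boldsymbol n_i=(k-i,i)$, so $\norm{\boldsymbol n_i-\boldsymbol n_j}_1=2|i-j|$ and $\mathcal N(i)=\{i-1,i+1\}\cap\Omega$ by Definition~\ref{def:neighbourhood_ladder}. Inserting $\boldsymbol e_0=(1,0)$ and $\boldsymbol e_1=(0,1)$ into \eqref{eq:neighbourhood_rolled} gives $\mathcal N_0(i)=\{i-1\}$ for $i>0$ and $\mathcal N_1(i)=\{i+1\}$ for $i<k$ (empty otherwise), hence $\mathcal S_0(i)=R_{i-1}$ and $\mathcal S_1(i)=R_{i+1}$; by \eqref{eq:transition_matrix_W} the matrix $W$ is then tridiagonal with $W_{i,i-1}=p_0=1-p$ and $W_{i,i+1}=p_1=p$.

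\emph{Tracing Algorithm~\ref{alg:construction_markov_chain}.} Because every $\mathcal N_b(i)$ is a singleton or empty, each pass of the main loop processes exactly one edge $\{i,i+1\}$ and its \textbf{for each} body runs once. The two pairs competing to activate this edge are $(b=1,i)$, with value $\mathcal S_1(i)=R_{i+1}$, and $(b=0,i+1)$, with value $\mathcal S_0(i+1)=R_i$. First I would verify that neither value is altered before the edge is handled: the only updates of $\mathcal S_b(\cdot)$ in the loop occur while processing an edge incident to the corresponding state, and since every directional neighbourhood here is a singleton, such an update affects only the direction pointing along the edge just processed, leaving the transverse direction (hence $R_{i+1}$ at $i$, and $R_i$ at $i+1$) intact. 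Therefore the global $\argmax$ in line~\ref{alg:line_max_arg} always reaches $\{i,i+1\}$ through its larger endpoint --- via $(1,i)$ if $R_{i+1}\ge R_i$ and via $(0,i+1)$ otherwise, ties being immaterial --- and lines~\ref{alg:line:max}--\ref{alg:line:reverse} then assign, in both branches, $V_{i,i+1}=R_{i+1}/(R_i\vee R_{i+1})$ and $V_{i+1,i}=R_i/(R_i\vee R_{i+1})$. Forming $P_{i,j}=V_{i,j}W_{i,j}$ off the diagonal and completing rows via \eqref{eq:transition_matrix_product} produces exactly \eqref{eq:transition_probs_ladder_univariate}.

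\emph{Update function and monotonicity.} Substituting the singleton $\mathcal N_B(i)$ into \eqref{eq:update_function_ladder_multivariate} immediately collapses $\phi$ to \eqref{eq:update_function_ladder_univariate}, which is an update function for the chain by Proposition~\ref{prop:multivariate_update_function}. For monotonicity, fix a realisation $(B,U)$. If $B=0$ then $\phi(j,0,U)\in\{j-1,j\}$ for every $j$, so for $i<i'$ we get $\phi(i,0,U)\le i\le i'-1\le\phi(i',0,U)$; if $B=1$ then $\phi(j,1,U)\in\{j,j+1\}$, so $\phi(i,1,U)\le i+1\le i'\le\phi(i',1,U)$. Hence $\phi(\cdot,B,U)$ is nondecreasing for every $(B,U)$, which is precisely the monotonicity requirement relative to the natural total order on $\Omega=\{0,\dots,k\}$, whose minimum and maximum are $0$ and $k$. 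I expect the only delicate point to be the bookkeeping in the previous paragraph: one must argue rigorously that the $\mathcal S$-values feeding the $\argmax$ are not corrupted before their edge is selected, so that each edge is resolved through its larger coefficient; with that in hand the claimed forms of $P$, of $\phi$, and the monotonicity are all direct substitutions.
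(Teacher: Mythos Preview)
Your proposal is correct and follows essentially the same approach as the paper: specialise the general construction to $m=1$, identify the singleton directional neighbourhoods, read off $W$ and $V$, and then verify monotonicity. Your tracing of Algorithm~\ref{alg:construction_markov_chain} is more explicit than the paper's (which simply asserts the form of $V$), and your monotonicity argument via the uniform bounds $\phi(j,0,U)\in\{j-1,j\}$, $\phi(j,1,U)\in\{j,j+1\}$ is a clean direct proof where the paper instead reduces to the adjacent case $j=i+1$ and argues by contradiction; both are valid and amount to the same observation.
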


\begin{figure}[h]
\centering
\begin{tikzpicture}
\node (p_0) at (2.5*0-2.5*3/2,0) {$R_0\frac{(1-p)^4}{C(p)}$};
\node (p_1) at (2.5*1-2.5*3/2,0) {$R_1\frac{p(1-p)^3}{C(p)}$};
\node (p_2) at (2.5*2-2.5*3/2,0) {$R_2\frac{p^2(1-p)^2}{C(p)}$};
\node (p_3) at (2.5*3-2.5*3/2,0) {$R_3\frac{p^3(1-p)}{C(p)}$};
\node (p_4) at (2.5*4-2.5*3/2,0) {$R_4\frac{p^4}{C(p)}$};

\draw[-latex] (p_0) to [loop above] node[midway, above]{$P_{0.0}$}(p_0);
\draw[-latex] (p_0) to [bend left] node[midway, above]{$P_{0,1}$}(p_1);
\draw[-latex] (p_1) to [bend left] node[midway, below]{$P_{1,0}$}(p_0);

\draw[-latex] (p_1) to [loop above] node[midway, above]{$P_{1,1}$}(p_1);
\draw[-latex] (p_1) to [bend left] node[midway, above]{$P_{1,2}$}(p_2);
\draw[-latex] (p_2) to [bend left] node[midway, below]{$P_{2,1}$}(p_1);

\draw[-latex] (p_2) to [loop above] node[midway, above]{$P_{2,2}$}(p_2);
\draw[-latex] (p_2) to [bend left] node[midway, above]{$P_{2,3}$}(p_3);
\draw[-latex] (p_3) to [bend left] node[midway, below]{$P_{3,2}$}(p_2);

\draw[-latex] (p_3) to [loop above] node[midway, above]{$P_{3,3}$}(p_3);
\draw[-latex] (p_3) to [bend left] node[midway, above]{$P_{3,4}$}(p_4);
\draw[-latex] (p_4) to [bend left] node[midway, below]{$P_{4,3}$}(p_3);

\draw[-latex] (p_4) to [loop above] node[midway, above]{$P_{4,4}$}(p_4);
\end{tikzpicture}
\caption{Transition probabilities on a fine and connected univariate ladder.}
\label{fig:univariate_ladder_MC}
\end{figure}
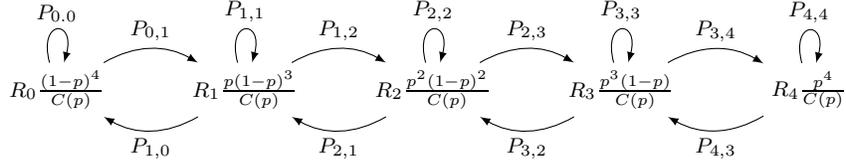

\subsection{Efficiency of the algorithm}
\label{sec:efficiency}
We now provide some results on the expected number of rolls of the original die required by CFTP and give insights on how the algorithm can be sped up. In particular, we  provide conditions for the expected number of rolls to be bounded uniformly in  $\boldsymbol{p}$. Interestingly, this is always the case when $m=1$, thus also in the Bernoulli Factory scenario. Moreover, we give tighter bounds when the univariate ladder is strictly log-concave, as defined below, and show that univariate ladders can be always transformed into an equivalent log-concave ladder through augmentation.

\begin{defi}[Log-concave discrete distribution]
\label{def:logconcave_discrete_distribution}
A discrete distribution $\boldsymbol{\mu}$ on $\Omega = \{0,\ldots,k\}$ is log-concave if for all $0 < i < k$,
\begin{equation} \label{eq:lc}
\mu_i^2 \geq \mu_{i-1}\mu_{i+1}.
\end{equation}
If the inequality is strict, then $\boldsymbol{\mu}$ is said to be strictly log-concave.
\end{defi}
 
We shall also show that augmenting the degree of the ladder produces a
log-concave distribution out of a ladder that is not itself
log-concave. To that end, we provide the following general theorem
that is of independent interest.
 
\begin{thm}
\label{conj:logconcavity_convolution_binomial}
For every discrete random variable $W$ on $\Omega = \{0,\ldots,n_0\}$ where $n_0 < \infty$, there exists a number $n = n(W)$ such that $W + B_n$ is strictly log-concave, where $B_n$ is an independent Binomial$(n,1/2)$.
\end{thm}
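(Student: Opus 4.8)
The plan is to show that convolving $W$ with sufficiently many fair Bernoulli trials eventually forces the log-concavity inequality \eqref{eq:lc} to hold strictly at every interior point, and to do so by obtaining explicit asymptotics of the coefficients of $W + B_n$ in $n$. Write $g(x) = \sum_{j=0}^{n_0} \Prob(W=j) x^j$ for the (finite) probability generating polynomial of $W$, so that the generating polynomial of $W+B_n$ is $g(x)(1+x)^n / 2^n$. Denote by $q_i^{(n)} = \Prob(W+B_n = i)$ the coefficients; then $2^n q_i^{(n)} = \sum_{j=0}^{n_0} \Prob(W=j)\binom{n}{i-j}$, a fixed nonnegative combination of $n_0+1$ consecutive binomial coefficients. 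The strict log-concavity at $i$ amounts to $\big(q_i^{(n)}\big)^2 > q_{i-1}^{(n)} q_{i+1}^{(n)}$, i.e. $D_i^{(n)} := \big(\sum_j a_j \binom{n}{i-j}\big)^2 - \big(\sum_j a_j \binom{n}{i-1-j}\big)\big(\sum_j a_j \binom{n}{i+1-j}\big) > 0$, writing $a_j = \Prob(W=j)$.

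The key step is an asymptotic analysis of $D_i^{(n)}$ as $n \to \infty$, uniformly in the location $i$. Here the natural move is to track $i$ relative to the bulk of $B_n$, i.e. to set $i = n/2 + t\sqrt{n}/2$ (roughly) and use the ratios $\binom{n}{i-j}/\binom{n}{i}$, which for $j$ fixed and $i$ in the central region behave like a smooth function of the centered-and-scaled coordinate; a local central limit / Laplace-type expansion then shows $q_i^{(n)}$ is, to leading order, a Gaussian profile times $g$ evaluated near $1$, and the correction terms are what decide the sign of $D_i^{(n)}$. One expects that after dividing out the dominant Gaussian factors, $D_i^{(n)}$ is positive of order $1/n$ times a strictly positive quantity in the central region, because a Gaussian is strictly log-concave and the perturbation coming from $g$ is lower order. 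For the tails — $i$ outside the central $O(\sqrt n \log n)$ window, and the boundary ranges $i \le n_0$ or $i \ge n + n_0 - 1$ where fewer than $n_0+1$ binomial terms are active — one argues separately: near the extreme left, $q_i^{(n)}$ is governed by the smallest index $j$ with $a_j > 0$ and one compares $\binom{n}{i-j}$ ratios directly, where strict log-concavity of a single binomial sequence gives the inequality with room to absorb the other terms (and symmetrically on the right). Splicing the central estimate with the two tail estimates yields strict log-concavity at every $0 < i < N$ (with $N = n + n_0$) once $n$ exceeds some $n(W)$.

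The main obstacle will be making the central-region expansion uniform in $i$ and extracting the sign of the $1/n$ correction cleanly: one must control the interplay between the Gaussian curvature (which favours strict log-concavity) and the first and second logarithmic derivatives of $g$ near $x=1$ (which could a priori work against it). A robust way to handle this, and the route I would actually take in the write-up, is to reduce to the classical fact that the polynomial $g(x)(1+x)^n$ has all of its roots with bounded modulus while $(1+x)^n$ contributes $n$ roots at $-1$; then invoke/adapt the criterion that a polynomial with only real negative roots has log-concave (indeed ultra-log-concave) coefficients, and show that after enough convolutions with $(1+x)$ the complex roots of $g$ are "diluted" enough that Newton's inequalities for the full product hold strictly. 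If $g$ does not have all real roots this last step needs care — that is precisely where the quantitative perturbation argument above re-enters — but the real-rooted case is immediate from Newton's inequalities, and the general case follows by the asymptotic argument once it is phrased as: the coefficients of $(1+x)^n$ dominate and their strict log-concavity survives the bounded-degree multiplicative perturbation by $g$ for large $n$.
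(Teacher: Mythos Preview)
Your proposal is a plan rather than a proof, and the gap sits exactly where you flag it yourself: you say the main obstacle is making the central-region expansion uniform in $i$ and extracting the sign of the $1/n$ correction against the contribution from the logarithmic derivatives of $g$ near $1$, and then you offer no mechanism for doing so. The heuristic ``the Gaussian profile is strictly log-concave, so a bounded-degree multiplicative perturbation by $g$ should not break it for large $n$'' is precisely the content of the theorem, not an input to its proof; a fixed polynomial multiplier can certainly destroy log-concavity (that is why the theorem is nontrivial), so the argument must exploit the specific structure of the perturbation, and you have not said how. Your fallback route through Newton's inequalities handles only the real-rooted case and then loops back to the unfinished asymptotic argument for the complex roots, so nothing is actually established.

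The paper's proof sidesteps the uniformity and sign-extraction issues by a factorisation step you do not use: write $g(x)=c\prod_l (x+c_l)\prod_j\bigl((x-a_j)^2+b_j^2\bigr)$ over $\mathbb{R}$, observe that convolution preserves strict log-concavity, and thereby reduce to proving the result for a \emph{single} irreducible quadratic factor $(x-a)^2+b^2$. For that, one computes the coefficients $a_k$ of $\bigl((x-a)^2+b^2\bigr)(1+x)^n$ explicitly, checks positivity by showing a quadratic in $k$ has negative discriminant for large $n$, and checks $a_k^2>a_{k-1}a_{k+1}$ by reducing it to positivity of an explicit cubic $P_{a,b}(k,n)$ in $k$ whose discriminant is $-4b^2 n^6+O(n^5)<0$ for large $n$, forcing a single real root lying outside $[2,n-2]$. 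This converts your delicate asymptotic problem into a finite algebraic verification. If you want to salvage your approach, the factorisation is the missing idea: once the perturbation has degree two, your ``compare binomial ratios'' strategy becomes an exact computation rather than a Laplace expansion with uncontrolled error terms.
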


Proof of the theorem is deferred to the appendix. Several works have looked into whether log-concavity is preserved
\cite{Hoggar1974, Saumard2014}, but checking whether some operations
\textit{introduce} log-concavity seems to be a harder problem
\cite{Johnson2006} and the above result appears to be the first in
this direction.

Building on Theorem \ref{conj:logconcavity_convolution_binomial}, we will
show that augmenting the degree of the ladder may lead to faster
implementation. This is expanded in
Proposition~\ref{lemma:impose_efficiency_condition} and empirically
verified in Examples \ref{ex:increase_degree_ladder} and \ref{ex:augmentin_states_faster}.

\subsubsection{General case}

\begin{prop}
\label{prop:bound_expected_rolls}
Let $\pi(\boldsymbol{p}): \Delta^m \rightarrow \Delta^k$ be a fine and connected ladder of degree $d$. Write $\boldsymbol{n}_i$ as in Definition \ref{def:multivariate_ladder} and assume $E := \{b \in \{0,\ldots,m\}: \exists i,  n_{i,b} = d \}$ is nonempty. Denote by $N$ the number of rolls of the original die required by CFTP when the update function of Proposition \ref{prop:multivariate_update_function} is used. Then, one can explicitly construct a new ladder $\pi'(\boldsymbol{p}): \Delta^m \rightarrow \Delta^w$ of degree $2d$ where $w < \min\{k(m+1)^d, {{2d+m}\choose{m}}\}$ that is a disaggregation of $\pi$ and such that  
\begin{equation*}
\mathbb{E}[N] \leq \min_{b\in E} \frac{(ap_b)^{-2d}-1}{1-ap_b},
\end{equation*}
where $a \in (0,1]$ is a constant independent on $\boldsymbol{p}$.
\end{prop}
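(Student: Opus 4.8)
The plan is to reduce the analysis of $\mathbb{E}[N]$ to a coupling-time question on a birth-death-like chain and then bound that coupling time by exhibiting an explicit drift toward a single ``absorbing'' direction. First I would fix a face $b \in E$, i.e. a face such that some monomial of $\pi$ has $n_{i,b} = d$. The idea is to repeatedly increase the degree of $\pi$ and thin — that is, augment — $d$ times, obtaining $\pi'$ of degree $2d$ with at most $\min\{k(m+1)^d, \binom{2d+m}{m}\}$ states by Proposition~\ref{prop:multivariate_impose_connected_fineness_condition}; since each augmentation is a disaggregation (the increase-degree step is a disaggregation and thinning preserves this), $\pi'$ is a disaggregation of $\pi$, so sampling from the two is equivalent and we may run CFTP on $\pi'$ instead. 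The point of augmenting is that in $\pi'$ every monomial now contains a factor $p_b^{j}$ with $j$ ranging over $\{0,\ldots,2d\}$ in a ``graded'' way, so the chain restricted to moves in direction $b$ looks like a nearest-neighbour walk on $\{0,\ldots,2d\}$ with a uniquely maximal state.

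Next I would analyse the CFTP coupling time. Using the update function $\phi$ of Proposition~\ref{prop:multivariate_update_function}, two coupled copies of the chain, driven by the same stream of $(B_t,U_t)$, coalesce once they agree. The key observation is that if at some time the rolled face is $b$ and $U$ is small enough, \emph{every} state is pushed monotonically toward the state with the highest power of $p_b$ (this is exactly the graded structure we engineered): there is a constant $a\in(0,1]$, independent of $\boldsymbol p$, such that from any state the probability of taking the ``$+1$ in direction $b$'' step is at least $a p_b$ (the constant $a$ absorbs the worst-case ratio $R_j/\mathcal S_b(i)$ appearing in $V$, which is bounded below by a quantity depending only on the fixed coefficients $R_i$, not on $\boldsymbol p$). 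Hence after $2d$ consecutive ``lucky'' steps — each occurring with probability at least $ap_b$ — all copies reach the top state in direction $b$ and thereafter agree on that coordinate; iterating/interleaving this over the geometry of the ladder forces full coalescence. The number of attempts until $2d$ such consecutive successes occur is dominated by a geometric-type quantity: the expected number of blocks is at most $(ap_b)^{-2d}$, each block costing at most $\frac{1}{1-ap_b}$-ish rolls on average, and assembling these gives $\mathbb{E}[N]\le \frac{(ap_b)^{-2d}-1}{1-ap_b}$. Taking the minimum over $b\in E$ yields the stated bound.

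The main obstacle I anticipate is making the ``every state is pushed toward the top in direction $b$'' claim fully rigorous \emph{uniformly over the coupled ensemble of all $k+1$ start states} — CFTP coalescence requires that the single coordinate process in direction $b$ synchronizes, but the full state is two- (or $m$-) dimensional, so one must argue that once the $b$-coordinate is pinned at its maximum the remaining freedom is on a lower-dimensional ladder and recurse, or, more cleanly, that the specific $2d$-step lucky event collapses the \emph{entire} ensemble at once because the top-$p_b$ monomial is unique (fineness) and connected to everything. Getting the constant $a$ right — showing it depends only on the $R_i$'s and the combinatorial structure, not on $\boldsymbol p$ — also needs care, since $\mathcal S_b(i)$ and the normalisations $\mathcal W_c(j)$ produced by Algorithm~\ref{alg:construction_markov_chain} must be bounded away from degeneracy; but because all $R_i>0$ and there are finitely many states, a compactness/finiteness argument suffices. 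The remaining steps (geometric tail summation, verifying $\pi'$ is a disaggregation, counting states) are routine.
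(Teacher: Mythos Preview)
Your proposal is correct and follows essentially the same route as the paper: augment $d$ times, take $a$ to be the minimum positive entry of the matrix $V$, observe that $2d$ consecutive steps with $B=b$ and $U<a$ drive every particle to the unique state proportional to $p_b^{2d}$, and bound $N$ by the classical waiting time for a run of $2d$ consecutive successes with success probability $ap_b$, giving $\mathbb{E}[\tau_b]=\frac{(ap_b)^{-2d}-1}{1-ap_b}$. Your anticipated obstacle dissolves exactly as in your ``cleaner'' alternative: the $d$-fold augmentation ensures (by the $A_i\cap A_j\neq\emptyset$ argument in the proof of Proposition~\ref{prop:multivariate_impose_connected_fineness_condition}) that from \emph{every} state other than the one proportional to $p_b^{2d}$ there is a neighbour in direction $b$, so the single $2d$-step lucky run collapses the whole ensemble at once and no recursion on lower-dimensional ladders is needed.
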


If $E = \{0,\ldots,m\}$, then we can bound $N$ by a quantity independent of $\boldsymbol{p}$, that is
\begin{equation}
\label{eq:loose_bound_m_2}
\mathbb{E}[N] \leq \min_{b\in E} \frac{(ap_b)^{-2d}-1}{1-ap_b} \leq \frac{ \left(\frac{a}{m+1}\right)^{-2 d} - 1}{1 - \frac{a}{m+1}}
\end{equation}

\subsubsection{From coins to dice}
We now restrict our analysis to rational
functions of the form $f: (0,1) \rightarrow \Delta^v$, where an implementation of monotonic CFTP is possible. We  study the
efficiency of the proposed method in terms of the required number of
tosses of the given $p$-coin. A direct consequence of Proposition \ref{prop:bound_expected_rolls} is the following.
\begin{cor}
\label{cor:bound_expected_tosses}
Let $\pi(p): (0,1) \rightarrow \Delta^k$ be a fine and connected ladder. Denote by $N$ the number of tosses of the $p$-coin required by CFTP when the update function of Corollary \ref{prop:markov_chain_update_function_univariate} is used. Then, one can explicitly construct a new ladder $\pi'(\boldsymbol{p}): (0,1) \rightarrow \Delta^{2k}$ that is a disaggregation of $\pi$ and such that 
\begin{equation*}
\mathbb{E}[N] \leq \min\left\{ \frac{(ap)^{-2(k-1)}-1}{1-ap}, \frac{(a(1-p))^{-2(k-1)}-1}{1-a(1-p)} \right\} \leq \frac{ \left(\frac{a}{2}\right)^{-2(k-1)} - 1}{1 - \frac{a}{2}}
\end{equation*}
where $a = \min_i\left\{\frac{R_{i-1}}{R_{i-1} \vee R_i} \wedge \frac{R_i}{R_{i-1} \vee R_i}\right\} \in (0,1]$ is a constant independent of $\boldsymbol{p}$.
\end{cor}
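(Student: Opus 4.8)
The plan is to obtain the Corollary as the specialization of Proposition~\ref{prop:bound_expected_rolls} to $m=1$; the substance is already in that Proposition, and what remains is to check that its hypotheses degenerate in the most favourable way in the univariate setting and to translate its generic constant into the monotone-CFTP description of Corollary~\ref{prop:markov_chain_update_function_univariate}.

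The first point is that the hypothesis of Proposition~\ref{prop:bound_expected_rolls} holds automatically, and in its strongest form. For a univariate fine and connected ladder as in~\eqref{eq:univariate_connected_fine_ladder} we have $\boldsymbol{n}_i=(k-i,\,i)$, so state $0$ attains the exponent $k$ (equal to the degree) in the variable $p_0=1-p$ and state $k$ attains it in $p_1=p$; hence the set $E$ of Proposition~\ref{prop:bound_expected_rolls} is the whole index set $\{0,1\}=\{0,\ldots,m\}$. Therefore both the $\boldsymbol p$-dependent estimate and the uniform-in-$\boldsymbol p$ improvement~\eqref{eq:loose_bound_m_2} apply. Substituting $m=1$, $p_0=1-p$, $p_1=p$, the quantity $\min_{b\in E}$ in Proposition~\ref{prop:bound_expected_rolls} becomes exactly the two-term minimum in the statement, and~\eqref{eq:loose_bound_m_2} with $m=1$ (so $a/(m+1)=a/2$) gives the final inequality.

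Next I would run, for $m=1$, the construction of the auxiliary ladder $\pi'$ supplied by Proposition~\ref{prop:bound_expected_rolls}: it remains a univariate ladder, so the monotone update function of Corollary~\ref{prop:markov_chain_update_function_univariate} applies to it; it is sampling-equivalent to $\pi$ through the disaggregation/aggregation correspondence of Definition~\ref{def:disaggregation}; and it lies in $\Delta^{2k}$. The generic constant $a\in(0,1]$ of Proposition~\ref{prop:bound_expected_rolls} is then identified via~\eqref{eq:update_function_ladder_univariate}: a single toss advances the chain one step in the forced direction precisely when the auxiliary uniform falls below the relevant ratio of consecutive coefficients, and the least such ratio over all interior states is $a=\min_i\{\,\frac{R_{i-1}}{R_{i-1}\vee R_i}\wedge\frac{R_i}{R_{i-1}\vee R_i}\,\}$, exactly the constant in the statement. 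Plugging $p_b\in\{p,1-p\}$ and this $a$ into Proposition~\ref{prop:bound_expected_rolls} produces the displayed bound on $\E[N]$, bearing in mind that each step of the chain consumes exactly one toss (the auxiliary uniform is not a toss) and that CFTP costs at most twice the coalescence-from-the-past time.

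The one step that needs care rather than being purely mechanical is fixing the exponent at $2(k-1)$. This is inherited from the proof of Proposition~\ref{prop:bound_expected_rolls}, specialized to $m=1$: the coalescence-from-the-past time of the monotone chain on the birth--death ladder $\pi'$ is bounded by a sum of level-crossing times, and feeding $a$ and $p_b$ through those crossing times turns the bound into the geometric series $\sum_{l=1}^{2(k-1)}(ap_b)^{-l}=\frac{(ap_b)^{-2(k-1)}-1}{1-ap_b}$; the count $2(k-1)$ is the number of ``obstructed'' crossings of the degree-$2k$ chain, the two crossings incident to the ladder's boundary states being unobstructed (from the top, resp.\ bottom, state there is no competing move). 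I expect this bookkeeping --- together with keeping straight which ratio is $a$ and which is the competing $\max$ in the denominators --- to be the main place one can slip, but no idea beyond Proposition~\ref{prop:bound_expected_rolls} is required.
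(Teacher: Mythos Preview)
Your core approach is exactly the paper's: the proof there is a single line, ``Follows by Proposition~\ref{prop:bound_expected_rolls} by noticing that in the case $m=1$, we necessarily have $E=\{0,1\}$,'' and your first two paragraphs reproduce this specialization correctly.

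Your final paragraph, however, over-explains and drifts away from the actual mechanism. The bound in Proposition~\ref{prop:bound_expected_rolls} is \emph{not} obtained as a sum of level-crossing times; its proof bounds $N$ by the waiting time $\tau_b$ for $2d$ \emph{consecutive} steps with $B=b$ and $U\le a$, and computes $\mathbb{E}[\tau_b]=\frac{(ap_b)^{-2d}-1}{1-ap_b}$ via the generating function of a run of successes. Your ``obstructed crossings'' story and the remark that ``CFTP costs at most twice the coalescence-from-the-past time'' are not part of the argument (the paper uses $T\gets T+1$, not doubling; see Remark after Algorithm~\ref{alg:monotonic_CFTP}). In fact the paper does not separately justify the exponent $2(k-1)$ versus the $2d$ of Proposition~\ref{prop:bound_expected_rolls}; it simply cites the Proposition. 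So you should drop that last paragraph rather than invent a reconciliation the paper does not give.
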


Therefore, the expected running time of the algorithm can be bounded uniformly in $p$. However, the bound proposed in Corollary \ref{cor:bound_expected_tosses} is
generally very loose and does not give insights into how the algorithm
could potentially be sped up. We now provide a tighter bound under the condition that the ladder $\pi(p)$ is a log-concave distribution. 

The proof of the following Proposition is in spirit similar to the Path Coupling technique of \cite{Bubley1997}.

\begin{prop}
\label{prop:efficiency_monotonic_CFTP}
Let $\pi:(0,1) \rightarrow \Delta^k$ be a univariate fine and
connected ladder as in
\eqref{eq:univariate_connected_fine_ladder}. Assume further that $\pi$ is strictly log-concave and that the Markov chain and update function defined in Corollary
\ref{prop:markov_chain_update_function_univariate} are
used. Then 
\begin{eqnarray*}
\mathbb{P}(N \geq n) & \leq & (k-1)\rho^n \qquad \text{and} \\
\mathbb{E}[N] & \leq & \frac{(k-1)\rho}{1-\rho},
\end{eqnarray*} 
where $\rho \in (0,1)$ for all $p \in (0,1)$ is given by
\begin{equation}
\label{eq:rho_efficiency}
\rho = \max_{i\in\{0,\ldots,k-2\}} [1-(P_{i,i+1}-P_{i+1,i+2}) - (P_{i+1,i}-P_{i,i-1})]
\end{equation}
with $P_{i,j}$ given by
\eqref{eq:transition_probs_ladder_univariate} with the convention that
$P_{k,k+1} = P_{0,-1} = 0$.
\end{prop}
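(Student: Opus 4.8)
The plan is to reduce the analysis of $N$ to the coalescence time of the two sandwiching sample paths of monotone CFTP, and then to control that coalescence time by a path-coupling argument whose contraction is powered by log-concavity. For the reduction: by Corollary~\ref{prop:markov_chain_update_function_univariate} the update map $\phi(\cdot,B,U)$ is monotone on $\{0,\dots,k\}$ with extreme states $0$ and $k$, so monotone CFTP applies and one only needs to follow the chains started at $0$ and at $k$. Running the backward composition $\phi_{-1}\circ\cdots\circ\phi_{-n}$, the output is decided exactly when the images of $0$ and $k$ coincide; by stationarity of the i.i.d.\ driving sequence $(B_t,U_t)$ the required number of rolls $N$ has the same law as the forward coalescence time $\tau:=\inf\{t\ge0:X_t^{0}=X_t^{k}\}$ of the chains started at $0$ and $k$ driven by a common sequence. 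Hence it suffices to bound $\mathbb{P}(\tau\ge n)$.

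The key algebraic observation (Step 2) is that for a ladder of the form~\eqref{eq:univariate_connected_fine_ladder} the ratio $\pi_i(p)^2/(\pi_{i-1}(p)\pi_{i+1}(p))$ equals $R_i^2/(R_{i-1}R_{i+1})$ independently of $p$, so log-concavity of $\pi(p)$ in the sense of Definition~\ref{def:logconcave_discrete_distribution} is equivalent to $R_i^2\ge R_{i-1}R_{i+1}$, with strict inequality for strict log-concavity; equivalently the ratios $r_i:=R_{i+1}/R_i$ are (strictly) decreasing. Writing $a_i:=\min\{1,R_{i+1}/R_i\}$ and $b_i:=\min\{1,R_{i-1}/R_i\}$ (with the conventions $a_k:=b_0:=0$), Corollary~\ref{prop:markov_chain_update_function_univariate} says the update moves $i\mapsto i+1$ exactly when $B=1,\ U\le a_i$ and $i\mapsto i-1$ exactly when $B=0,\ U\le b_i$, and $P_{i,i+1}=p\,a_i$, $P_{i,i-1}=(1-p)\,b_i$; moreover $(a_i)$ is decreasing and $(b_i)$ is increasing in $i$. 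The heart of the proof (Step 3) is then to fix $0\le j\le k-1$, run $X^{j}$ and $X^{j+1}$ started one step apart and coupled by the common $(B_t,U_t)$, and control $G_t:=X_t^{j+1}-X_t^{j}$: by monotonicity $G_t\ge0$, and I claim $G_t\in\{0,1\}$ with $0$ absorbing. Indeed, if $G_t=1$ with the pair at $(i,i+1)$, then on a step with $B_{t+1}=1$ we have $a_{i+1}\le a_i$, so whenever the upper chain moves up so does the lower; the gap therefore stays $1$ (both move up, or both stay) or collapses to $0$ (lower moves up, upper stays), the latter with conditional probability $a_i-a_{i+1}\ge0$. Symmetrically, on a step with $B_{t+1}=0$ we have $b_i\le b_{i+1}$ and the gap stays $1$ or collapses, the latter with conditional probability $b_{i+1}-b_i\ge0$. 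Hence $\mathbb{E}[G_{t+1}\mid\mathcal F_t]\le(1-q_{i})G_t$ with $q_i:=p(a_i-a_{i+1})+(1-p)(b_{i+1}-b_i)=(P_{i,i+1}-P_{i+1,i+2})+(P_{i+1,i}-P_{i,i-1})$ under the conventions $P_{k,k+1}=P_{0,-1}=0$; taking $\rho:=\max_i(1-q_i)$, which is precisely~\eqref{eq:rho_efficiency}, gives $\mathbb{E}[G_{t+1}\mid\mathcal F_t]\le\rho\,G_t$ and so $\mathbb{E}[G_t]\le\rho^{t}$. One also checks $\rho\in(0,1)$ for every $p\in(0,1)$: both brackets in $q_i$ are $\ge0$ by log-concavity, strict log-concavity (via a short case split on whether the relevant ratios exceed $1$, i.e.\ whether the truncations $\min\{1,\cdot\}$ are active) forces at least one bracket to be strictly positive so $q_i>0$, and each bracket is $<1$ so $q_i<1$.

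To assemble the bound (Step 4), run all chains $X^{0}\le X^{1}\le\cdots\le X^{k}$ from the common driving sequence; monotonicity gives $X_t^{k}-X_t^{0}=\sum_{j=0}^{k-1}G_t^{(j)}$ and $\{\tau>t\}=\{X_t^{0}\neq X_t^{k}\}=\{\sum_j G_t^{(j)}\ge1\}$, so by Markov's inequality and Step 3, $\mathbb{P}(\tau>t)\le\sum_j\mathbb{E}[G_t^{(j)}]\le(k-1)\rho^{t}$, the prefactor coming from the count of adjacent-state interfaces being tracked. This yields $\mathbb{P}(N\ge n)\le(k-1)\rho^{n}$, and summing over $n\ge1$ gives $\mathbb{E}[N]=\sum_{n\ge1}\mathbb{P}(N\ge n)\le(k-1)\rho/(1-\rho)$.

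I expect the main obstacle to be Step 3: extracting from (strict) log-concavity of $(R_i)$ both the qualitative statement that an adjacent pair of coupled chains never drifts apart (so the gap process is monotone and absorbed at $0$) and the exact one-step collapse probability $q_i$ with the correct behaviour at the two endpoints $i=0$ and $i=k-1$ -- this is where the \q{path coupling in spirit} does its work -- together with the uniform-in-$p$ verification that $\rho<1$, which is somewhat delicate precisely because of the truncations $\min\{1,\cdot\}$ in the transition probabilities.
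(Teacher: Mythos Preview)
Your proposal is correct and follows essentially the same path-coupling argument as the paper: reduce $N$ to the forward coalescence time, show via the monotonicity of $a_i=R_{i+1}/(R_i\vee R_{i+1})$ and $b_i=R_{i-1}/(R_{i-1}\vee R_i)$ (implied by log-concavity) that adjacent coupled chains stay at distance $\le 1$ with one-step survival probability $\rho_i\le\rho$, then telescope and apply Markov's inequality. One cosmetic slip you share with the paper: the telescoping $X_t^k-X_t^0=\sum_{j=0}^{k-1}G_t^{(j)}$ has $k$ terms, not $k-1$, so the natural prefactor is $k$ rather than the stated $k-1$.
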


\begin{remark}
Given a univariate fine and connected ladder, $\rho$ as in equation \eqref{eq:rho_efficiency} can be explicitly computed for a fixed $p \in (0,1)$.  Moreover, the algorithm still requires a finite number of tosses even if $p=1$ or $p=0$. In this case the expected number of tosses $\mathbb{E}[N]$ can be exactly computed:
\begin{equation*}
\mathbb{E}[N] = \begin{cases}
\frac{R_0 \vee R_1}{R_1} + \ldots + \frac{R_{k-2} \vee R_{k-1}}{R_{k-1}} &\quad \text{if } p = 1, \\
\frac{R_0 \vee R_1}{R_0} + \ldots + \frac{R_{k-2} \vee R_{k-1}}{R_{k-2}} &\quad \text{if } p = 0. \\
\end{cases}
\end{equation*}
\end{remark}

Given a rational function $f: (0,1) \rightarrow \Delta^v$, we have
proved in Theorem \ref{thm:decomposition_multivariate} that it is
always possible to construct a univariate fine and connected ladder
$\pi: (0,1) \rightarrow \Delta^k$. However, $\pi$ may not be strictly
log-concave. Using Theorem \ref{conj:logconcavity_convolution_binomial}, we now show that augmenting the ladder enough times produces a new ladder that is strictly log-concave, so that Proposition \ref{prop:efficiency_monotonic_CFTP} applies.

\begin{lemma}
\label{lemma:impose_efficiency_condition}
Let $\pi:(0,1) \rightarrow \Delta^k$ be a univariate fine and
connected ladder as in Section \ref{def:univariate_connected_fine_ladder}. Then, one can explicitly construct a new univariate fine and connected ladder $\pi':(0,1) \rightarrow \Delta^w$ where $w \geq k$ such that $\pi'$ is a disaggregation of $\pi$ and $\pi'$ is strictly log-concave.
\end{lemma}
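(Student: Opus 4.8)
The plan is to combine the convolution interpretation of augmentation from Remark \ref{rmk:augmenting_convolution} with the general log-concavity result of Theorem \ref{conj:logconcavity_convolution_binomial}. Recall that for a univariate fine and connected ladder $\pi:(0,1)\rightarrow\Delta^k$ of the form \eqref{eq:univariate_connected_fine_ladder}, augmenting $\pi$ once is, by Remark \ref{rmk:augmenting_convolution}, the same as forming the distribution of $X+W$ where $X\sim\pi(p)$ and $W\sim\text{Ber}(p)$, up to the common factor $C(p)$ in the denominator which plays no role in the log-concavity inequality \eqref{eq:lc}. The difficulty is that the $\text{Ber}(p)$ we are allowed to convolve with has the \emph{unknown} parameter $p$, whereas Theorem \ref{conj:logconcavity_convolution_binomial} guarantees log-concavity only after convolving with $B_n\sim\text{Bin}(n,1/2)$, a \emph{fair}-coin binomial. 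So the first step is to replace the $p$-coin inside the ladder machinery with a fair coin.

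First I would observe that a fair coin can be simulated from the $p$-coin via von Neumann's trick (already noted in Section \ref{sec:notation_preliminaries} as part of the standing assumptions), so conceptually we are entitled to convolve with $\text{Bin}(n,1/2)$; but to keep everything inside the ladder formalism I would instead argue directly on the coefficients. Write $\pi_i(p)=R_i\,p^i(1-p)^{k-i}/C(p)$. Augmenting and thinning repeatedly produces, after $n$ steps, a ladder $\pi^{(n)}$ of degree $k+n$ whose $i$-th coefficient $R^{(n)}_i$ is, up to a fixed multiplicative constant, the coefficient obtained by multiplying the generating polynomial $\sum_i R_i x^i$ by $(1+x)^n$ — because each augmentation multiplies by $p_0+p_1$ in a way that, under the lexicographic ordering of exponents, acts on the coefficient sequence exactly as convolution with $(1,1)$. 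Hence $R^{(n)} = R * \binom{n}{\cdot}$ as sequences, i.e. $R^{(n)}_j=\sum_i R_i\binom{n}{j-i}$. Log-concavity of the ladder $\pi^{(n)}(p)$ — meaning $(\pi^{(n)}_j(p))^2\ge\pi^{(n)}_{j-1}(p)\pi^{(n)}_{j+1}(p)$ for all $p$ — is equivalent, after cancelling the $C(p)$ factors and the powers of $p(1-p)$, to log-concavity of the sequence $R^{(n)}$, which is \emph{independent of $p$}. This is the key reduction: the $p$-dependence drops out, and what remains is a purely combinatorial statement about the sequence $R$.

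Next I would invoke Theorem \ref{conj:logconcavity_convolution_binomial}. Let $W$ be the integer-valued random variable on $\{0,\ldots,k\}$ with $\mathbb{P}(W=i)\propto R_i$ (the $R_i$ are strictly positive, so this is a genuine finitely-supported distribution). The theorem gives an $n=n(W)$ such that $W+B_n$ is strictly log-concave, i.e. the sequence $\mathbb{P}(W=\cdot)*\binom{n}{\cdot}/2^n$, hence the sequence $R^{(n)}=R*\binom{n}{\cdot}$, is strictly log-concave. By the reduction of the previous paragraph this is precisely strict log-concavity of the ladder $\pi^{(n)}(p)$ for every $p\in(0,1)$. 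Setting $\pi':=\pi^{(n)}$ with $w:=k+n\ge k$ finishes the construction; it is fine and connected because augmentation of a fine connected univariate ladder is again fine and connected (a univariate ladder of degree $d$ is automatically fine and connected once $C(p)$ has no roots in $[0,1]$, as noted in Section \ref{def:univariate_connected_fine_ladder}, and increasing the degree preserves $C(p)$), and it is a disaggregation of $\pi$ because each individual augmentation step is a composition of an increase-of-degree (a disaggregation) and a thinning (also a disaggregation), and disaggregation composes — so sampling from $\pi'$ is equivalent to sampling from $\pi$.

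The main obstacle is making the identification $R^{(n)}=R*\binom{n}{\cdot}$ fully rigorous, i.e. checking that augmenting a univariate ladder acts on the coefficient sequence exactly as convolution with the row $(1,1)$ once the monomials are ordered lexicographically (equivalently, by the exponent of $p$). This is essentially the content of Remark \ref{rmk:augmenting_convolution} iterated $n$ times, but one must be careful that the thinning step correctly collapses the redundant states $\pi_i(p)p_0$ and $\pi_{i+1}(p)p_1$ (which carry the same monomial $p^{i+1}(1-p)^{k-i}$) into a single coefficient $R_i+R_{i+1}$, and that the common denominator $C(p)$ and the overall normalisation are untouched. Once that bookkeeping is in place, the argument is immediate from Theorem \ref{conj:logconcavity_convolution_binomial}; no further estimates are needed, and in particular one does not need any quantitative control on $n(W)$.
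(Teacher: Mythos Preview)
Your proposal is correct and follows essentially the same route as the paper: reduce log-concavity of the ladder $\pi^{(n)}(p)$ to log-concavity of the coefficient sequence $R^{(n)}=R*\binom{n}{\cdot}$ (which is $p$-free), interpret that sequence as the unnormalised pmf of $W+B_n$ with $W$ supported on $\{0,\ldots,k\}$ and $\mathbb{P}(W=i)\propto R_i$, and invoke Theorem~\ref{conj:logconcavity_convolution_binomial}. Your write-up is in fact more explicit than the paper's about why the $p$-dependence cancels and why the convolution kernel is $\binom{n}{\cdot}$ rather than a $p$-dependent binomial; the only slip is terminological---thinning is an \emph{aggregation}, not a disaggregation (the pre-thinned ladder is a disaggregation of the thinned one), so ``disaggregation composes'' is not quite the right justification for the final clause, though ``sampling is equivalent'' still holds and is all that is actually used.
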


Hence increasing the degree of a generic
univariate fine and connected ladder $\pi: (0,1) \rightarrow \Delta^k$
may lead to a faster implementation of monotonic CFTP despite an increased number of states that the chain
needs to visit. Clearly, this leads to a trade-off that the user may want
to calibrate, as shown in Examples \ref{ex:increase_degree_ladder} and \ref{ex:augmentin_states_faster}.

\begin{ex}
\label{ex:increase_degree_ladder}
Consider sampling from the following ladder
\begin{equation*}
\pi(p) \propto ((1-p)^4,1000p(1-p)^3,p^2(1-p)^2,500p^3(1-p),p^4).
\end{equation*}
Clearly, $\pi$ is not log-concave. We can augment the ladder up to two times to obtain respectively
\begin{align*}
\pi^{(1)} &\propto ((1-p)^5,1001p(1-p)^4,1001p^2(1-p)^3,501p^3(1-p)^2,500p^4(1-p),p^5)  \\
\pi^{(2)} &\propto ((1-p)^6,1002p(1-p)^5,2002p^2(1-p)^4, \\ 
&1502p^3(1-p)^3,1001p^4(1-p)^2,500p^5(1-p),p^6).
\end{align*}
Notice that now $\pi^{(2)}$ is strictly log-concave. Table \ref{tbl:efficiency_tosses} shows the empirical number of tosses required by the algorithm when sampling from either $\pi(p)$, $\pi^{(1)}(p)$ or $\pi^{(2)}(p)$ for different values of $p$. Notice that even if $\pi^{(1)}(p)$ is not log-concave, it still leads to a slightly faster implementation than when targeting $\pi^{(2)}(p)$. 

\begin{table}[H]
\begin{tabular}{c|ccccccc}
    & \multicolumn{7}{c}{True value of $p$}                    \\
    & 0.01  & 0.1   & 0.25   & 0.5   & 0.75  & 0.9   & 0.99  \\ \hline
\\[-1em]
$\hat{\mathbb{E}}[N_\pi]$  & 561.31    & 621.73 & 827.86  & 1332.63   & 1433.59  & 1209.28   & 1090.54   \\ \\[-1em] \hline
\\[-1em]
$\hat{\mathbb{E}}[N_{\pi^{(1)}}]$ & 92.35 & 12.21 & 7.72 & 8.65 & 9.48 & 12.59 & 87.05 \\ \\[-1em] \hline
\\[-1em]
$\hat{\mathbb{E}}[N_{\pi^{(2)}}]$ & 93.17 & 13.33 & 9.43 & 11.29 & 11.67 & 14.47 & 89.56 \\ \\[-1em] \hline
\end{tabular}
\caption{Average number of required tosses of the $p$-coin over 1,000 runs of the algorithm when targeting $\pi, \pi^{(1)}$ and $\pi^{(2)}$.}
\label{tbl:efficiency_tosses}
\end{table}

\end{ex}

\section{Examples and implementation}
\label{sec:examples_implementation}

An \texttt{R} package implementing the method and reproducing the examples is available at \url{https://github.com/giuliomorina/DiceEnterprise}. The user is just required to define the function $f(\boldsymbol{p})$ and to provide a function that rolls the original die. Then, the package automatically constructs the fine and connected ladder and implements CFTP. If the original die has only two faces, the monotonic version of CFTP is automatically employed. 

We now show how the method works and performs on some examples, all of which can be reproduced using the provided package. We
start with a toy example to better explain and highlight
the construction proposed in Theorem
\ref{thm:decomposition_multivariate} and Proposition
\ref{prop:multivariate_markov_chain}. Next, we examine efficiency of
the monotonic and general versions of the algorithm by considering
higher order rational functions. We also consider the so-called
logistic Bernoulli factory as studied in \cite{Huber2017}. We show
that our method leads to a simple algorithm which on average requires
the same number of tosses as the approach of
\cite{Huber2017}. Finally, we deal with a slightly different scenario
where instead of an $m$-sided die, $m$ independent coins are provided
where the probability $\boldsymbol{p} = (p_0,\ldots,p_{m-1}) \in (0,1)^m$
of tossing heads is unknown. In particular, we notice how we can
construct a Dice Enterprise for the \q{Bernoulli Race} function
considered in \cite{Dughmi2017} which again has the same performance in terms of the expected number of required tosses.

\begin{ex}[Toy example of Bernoulli Factory]
\label{ex:toy_bf}
Let $p \in (0,1)$ and assume we wish to generate a coin that lands heads with probability 
\begin{equation*}
\frac{\sqrt{2}p^3}{(\sqrt{2}-5)p^3+11p^2-9p+3},
\end{equation*}
having access only to a $p$-coin. Our proposed construction produces the following fine and connected ladder
\begin{equation*}
\pi(p) = (3(1-p)^4,3p(1-p)^3,2p^2(1-p)^2,(\sqrt{2}+2)p^3(1-p),\sqrt{2}p^4),
\end{equation*}
via the following steps:
\begin{enumerate}
\item Let $C(p) = (\sqrt{2}-5)p^3+11p^2-9p+3$ and consider 
\begin{equation*}
f(p) =  \frac{1}{C(p)}(\underbrace{-5p^3+11p^2-9p+3}_{D_0(p)}, \underbrace{\sqrt{2}p^3}_{D_1(p)}).
\end{equation*}
Convert $D_0(p)$ and $D_1(p)$ into homogeneous polynomials in the variables $p$ and $(1-p)$ with positive coefficients and of the same degree. This can be achieved by using the multinomial theorem (cf. proof of Theorem \ref{thm:decomposition_multivariate}). We get
\begin{align*}
D_0(p) &= 2p^2(1-p)+3(1-p)^3, \\
D_1(p) &= \sqrt{2}p^3, \\
\end{align*}
so that we can equivalently consider the ladder
\begin{equation*}
\pi'(p) = \frac{1}{C(p)}(3(1-p)^3,2p^2(1-p),\sqrt{2}p^3).
\end{equation*}
and notice that if $X \sim \pi'$, then $W = \mathbb{I}(X \in \{3\})$ is distributed as $f(p)$.
\item Notice that $\pi'$ is not a connected ladder, as there is no term proportional to $p(1-p)^2$. By applying the binomial theorem, we can construct a new ladder
\begin{equation*}
\tilde{\pi}(p) = \frac{1}{C(p)} (\tilde{\pi}_0(p),\tilde{\pi}_1(p),\tilde{\pi}_2(p),\tilde{\pi}_3(p),\tilde{\pi}_4(p),\tilde{\pi}_5(p)),
\end{equation*}
where 
\begin{align*}
\tilde{\pi}_0(p) &= \pi'_0(p){{1}\choose{0}}(1-p) = 3(1-p)^4, \\
\tilde{\pi}_1(p) &= \pi'_0(p){{1}\choose{1}}p = 3p(1-p)^3, \\
\tilde{\pi}_2(p) &= \pi'_1(p){{1}\choose{0}}(1-p) = 2p^2(1-p)^2, \\
\tilde{\pi}_3(p) &= \pi'_1(p){{1}\choose{1}}p = 2p^3(1-p), \\
\tilde{\pi}_4(p) &= \pi'_2(p){{1}\choose{0}}(1-p) =  \sqrt{2}p^3(1-p), \\
\tilde{\pi}_5(p) &= \pi'_2(p){{1}\choose{1}}p = \sqrt{2}p^4. \\
\end{align*}
Notice that if $Y \sim \tilde{\pi}$, then $X = \mathbb{I}(Y \in \{2,3\}) + 2\cdot \mathbb{I}(Y \in \{4,5\})$ is distributed as $\pi'$.
\item Finally, we can construct a fine and connected ladder by adding up together the terms where the same monomial appears:
\begin{equation*}
\pi(p) = \frac{1}{C(p)}(3(1-p)^4,3p(1-p)^3,2p^2(1-p)^2,(\sqrt{2}+2)p^3(1-p),\sqrt{2}p^4).
\end{equation*}
Assume $Z \sim \pi$, $U \sim \text{Unif}(0,1)$ and let
\begin{align*}
Y &= \cdot\mathbb{I}(Z=1) + 2\cdot \mathbb{I}(Z=2) +\\
  &3\cdot\mathbb{I}\left( Z=3, U\leq\frac{2}{2+\sqrt{2}} \right) 
+ 4\cdot \mathbb{I}\left( Z=3, U > \frac{2}{2+\sqrt{2}} \right) + 5\cdot \mathbb{I}(Z=4)
\end{align*}
so that $Y\sim \tilde{\pi}$.
\end{enumerate}

Table \ref{tbl: bernoulli_factory} shows the performance of CFTP for different values of the unknown probability $p$.

\begin{table}[H]
\centering
\begin{tabular}{l|ccccccc}
  \hline
 $p$ & 0.01 & 0.25 & 0.5 & 0.75 & 0.99 \\ 
  \hline
  $f(p)$ & 0.00  & 0.02 & 0.22 & 0.65  & 0.99 \\ 
  $\hat{f}(p)$ & $\cithree{0.00}{0.00}{0.00}$  & $\cithree{0.01}{0.01}{0.02}$ & $\cithree{0.18}{0.21}{0.03}$  & $\cithree{0.63}{0.66}{0.69}$ & $\cithree{0.98}{0.99}{0.99}$ \\ 
  $\hat{\mathbb{E}}[N]$ & 4.80  & 7.45 & 10.61 & 8.05 & 5.94 \\ 
   \hline
\end{tabular}
\caption{Implementation of the Bernoulli Factory for the function $f(p) = \frac{\sqrt{2}p^3}{(\sqrt{2}-5)p^3+11p^2-9p+3}
$ and for different values of the true unknown probability $p$. The algorithm has been run 1,000 times to obtain tosses of the $f(p)$-coin and $\hat{f}(p)$ is the sample average. Smaller numbers represent 95\% confidence interval computed via the method of \cite{Sison1995}. $\hat{\mathbb{E}}[N]$ is the empirical expected number of tosses in a run of the CFTP algorithm. }
\label{tbl: bernoulli_factory}
\end{table}
\end{ex}

\begin{ex}[Augmenting the number of states can lead to faster running time]
\label{ex:augmentin_states_faster}
Given a $p$-coin, consider constructing a 3-sided die where the probability of rolling each face is given by
\begin{equation}
\label{eq:example_efficiency_coin}
\pi(p) \propto \{p^{20},p^{10}(1-p)^{10},(1-p)^{20}\}.
\end{equation}
A naive rejection sampling approach to construct a Bernoulli Factory for $\pi(p)$ would be the following: toss the $p$-coin 20 times and with probability $1/3$ output $1$ if all the tosses are heads, with probability $1/3$ output $2$ if the first 10 tosses are heads and the last 10 tosses are tails, with probability $1/3$ output $3$ if all the tosses are tails. In all other cases, restart the algorithm. 

Assume now that $p = 1/2$, so that the expected number of tosses of this naive procedure would be $\mathbb{E}[N] = 2^{20} \approx 10^6$. Table \ref{tbl:example_efficiency_coin} shows the performance of our novel algorithm on the same example when targeting the ladder of equation \eqref{eq:example_efficiency_coin}, as well as when targeting the augmented ladder where extra states are added. Indeed, Lemma \ref{lemma:impose_efficiency_condition} and Proposition \ref{prop:efficiency_monotonic_CFTP} suggest that doing so may lead to faster performance, as empirically confirmed. Notice that to get a strictly log-concave ladder, we need to augment $\pi$ at least 203 times. In practice, it is enough to augment it around 40 times to obtain optimal performance, due to the trade-off effect discussed in Section \ref{sec:efficiency}.

\begin{table}[H]
\begin{tabular}{c|ccccccc}
    & \multicolumn{7}{c}{Number of states added to the original ladder $\pi(p)$}                    \\
    & +0  & +20   & +40   & +60   & +80  & +100   & +120  \\ \hline
    \\[-1em]
$\hat{\mathbb{E}}[N]$  & 5337.7    & 585.7 & 471.4  & 481.7  & 529.3  & 590.4   & 647.9   \\ \hhline{========}
& +140  & +160   & +180   & +200   & +220  & +240   & +260 \\ \hline
\\[-1em]
$\hat{\mathbb{E}}[N]$  & 717.2    & 774.2 & 840.2  & 892.3   & 927.3  & 996.4   & 1038.9  \\ 
\end{tabular}
\caption{Implementation of the Dice Enterprise for the function of eq. \eqref{eq:example_efficiency_coin} when $p = 1/2$. The algorithm has been run 1,000 times and $\hat{\mathbb{E}}[N]$ is the empirical number of tosses of the $p$-coin required. The augmented ladder is strictly log-concave when at least 203 states are added.}
\label{tbl:example_efficiency_coin}
\end{table}

Consider now a slightly different example, where a 3-sided fair die is given, i.e. $\boldsymbol{p} = (1/3,1/3,1/3)$, and the aim is to construct a 4-sided die where the probability of rolling each face is given by 
\begin{equation}
\label{eq:example_efficiency_die}
\pi(\boldsymbol{p}) \propto \{p_0^5p_1^5p_2^5,p_0^{15},p_1^{15},p_2^{15}\}.
\end{equation}
A naive approach as the one before would require on average $\mathbb{E}[N] = 3^{15}\approx 1.4 \times 10^7$ rolls of the $\boldsymbol{p}$-die. Although the result of Proposition \ref{prop:efficiency_monotonic_CFTP} does not hold here, as a monotonic implementation of CFTP is not possible, augmenting the ladder may still lead to faster performance. This is indeed the case, as shown in Table \ref{tbl:example_efficiency_die}, where targeting the ladder with 60 extra states leads to an implementation that requires on average around 840 tosses of the $\boldsymbol{p}$-die, instead of more than 100,000 when directly targeting the original $\pi(\boldsymbol{p})$.

\begin{table}[H]
\begin{tabular}{c|cccccccc}
    & \multicolumn{8}{c}{Number of states added to the original ladder $\pi(\boldsymbol{p})$}                    \\
    & +0  & +10   & +20   & +30   & +40  & +50   & +60 & +70  \\ \hline
    \\[-1em]
$\hat{\mathbb{E}}[N]$  & 174246.4   & 2569.0 & 1341.5  & 1032.9   & 912.5  & 874.0   & 841.4 & 860.1   \\ \\[-1em] \hline
\end{tabular}
\caption{Implementation of the Dice Enterprise for the function of eq. \eqref{eq:example_efficiency_die} when $\boldsymbol{p} = (1/3,1/3,1/3)$. The algorithm has been run 1,000 times and $\hat{\mathbb{E}}[N]$ is the empirical number of rolls of the $\boldsymbol{p}$-die required.}
\label{tbl:example_efficiency_die}
\end{table}
\end{ex}

\begin{ex}[Logistic Bernoulli Factory]

Consider constructing a Bernoulli factory for the function
\begin{equation*}
\frac{Cp}{1+Cp}, \qquad C > 0.
\end{equation*}
Such problem is considered in \cite{Huber2017} where it is referred as
constructing a logistic Bernoulli factory. In the same paper, the
author proposes an ad-hoc algorithm that exploits properties of
thinned Poisson processes and requires on average $\mathbb{E}[N_H] =
C/(1+Cp)$ tosses of the $p$-coin. We now show that our proposed method
leads to an alternative algorithm that requires on average the same
number of tosses. The fine and connected ladder for this target is
\begin{equation*}
\pi(p) = \frac{1}{1+Cp}((1+C)p,(1-p)).
\end{equation*}
Given $Y \sim \pi$ and $U \sim \text{Unif}(0,1)$, we output heads if
$Y=1, U<C/(1+C)$ and tails otherwise. Sampling from $\pi(p)$ boils
down to sampling from the stationary distribution of a Markov chain
consisting of only two states, as depicted in Figure
\ref{fig:logistic_bf}. CFTP needs to keep track of only two chains
starting in the two states and the algorithm stops as soon as one of
the two chain moves, as they cannot both move at the same time. In particular, the particles coalesce if heads is tossed or if the uniform r.v. $U$ drawn by the algorithm is such that $U \leq 1/(1+C)$. Therefore, CFTP is equivalent to algorithm \ref{alg:logistic_bf} which is a special case of the 2-coin algorithm presented in \cite{Goncalves2017,Goncalves2017b} with $c_1 = C, c_2=1, p_1 = p, p_2 = 1$.

\begin{algorithm}[H]
	\caption{Logistic Bernoulli Factory}
	\label{alg:logistic_bf}
	 \hspace*{\algorithmicindent}\justifying\textbf{Input:} black box to sample from Ber$(p)$, a constant $C > 0$. \\
	 \hspace*{\algorithmicindent}\justifying\textbf{Output:} a sample from Ber$(Cp/(1+Cp))$.
	\begin{algorithmic}[1]
		\State{Sample $U \sim \text{Unif}(0,1)$}
		\If{$U \leq \frac{1}{1+C}$} set $Y:=0$
		\Else
			\State{Sample $B \sim \text{Bern}(p)$}
			\If{$B = 1$} set $Y:=1$
			\Else{ discard $U,B$ and GOTO 1}
			\EndIf
		\EndIf
		\State{Output $Y$}
	\end{algorithmic}
\end{algorithm}

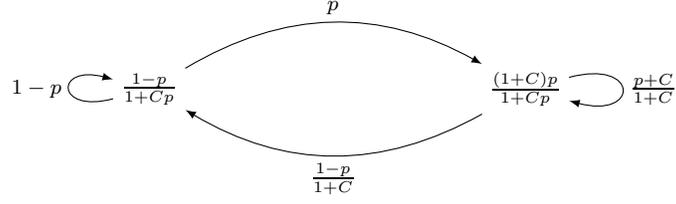
\begin{figure}[H]
\centering
\begin{tikzpicture}
\node (p_0) at (2.5*0-2.5*3/2,0) {$\frac{1-p}{1+Cp}$};
\node (p_1) at (2.5*2-2.5*3/2,0) {$\frac{(1+C)p}{1+Cp}$};

\draw[-latex] (p_0) to [loop left] node[]{$1-p$}(p_0);
\draw[-latex] (p_0) to [bend left] node[midway, above]{$p$}(p_1);
\draw[-latex] (p_1) to [bend left] node[midway, below]{$\frac{1-p}{1+C}$}(p_0);
\draw[-latex] (p_1) to [loop right] node[]{$\frac{p+C}{1+C}$}(p_1);

\end{tikzpicture}
\caption{Dynamic of the Markov chain with stationary distribution $\pi(p) = \frac{1}{1+Cp}((1+C)p,(1-p))$.}
\label{fig:logistic_bf}
\end{figure}

The probability that the algorithm stops at a specific iteration is $\frac{1+Cp}{1+C}$. Since each iteration is independent of the others and the probability that a toss of the $p$-coin is required is $\frac{C}{1+C}$, the average number of tosses is then given by
\begin{equation*}
\mathbb{E}[N_{\text{CFTP}}] = \frac{1+C}{1+Cp}\frac{C}{1+C} = \frac{C}{1+Cp}
\end{equation*}
and it is thus equal to $\mathbb{E}[N_H]$.

\end{ex}

\begin{ex}[Independent coins and Bernoulli Race]
\label{ex:bernoulli_race}

We now deal with a slightly different scenario where instead of having access to a die, $m$ independent coins are given. Similarly, the probability of tossing heads on each of the coin is unknown and given by $\boldsymbol{p} = (p_0,\ldots,p_{m-1}) \in (0,1)^m$, so that the problem is now obtaining a sample from a rational function $f: (0,1)^m \rightarrow \Delta^v$. There are several ways to transform tosses of $m$ coins into a roll of a die. In particular, we can construct an $(m+1)$-sided die in the following fashion. Firstly, we choose uniformly which coin to toss, say the $i$\textsuperscript{th}. If the result is heads we output $i \in \{0,\ldots,m-1\}$, otherwise we output $m$. The probabilities of obtaining each face by rolling the so constructed $(m+1)$-sided die is then given by $\tilde{\boldsymbol{p}} = \left(\frac{p_0}{m},\ldots,\frac{p_{m-1}}{m},1-\frac{1}{m}\sum_{i=0}^{m-1} p_i\right) \in \Delta^m$. The function $f(\boldsymbol{p})$ can be transformed into a function of $\tilde{\boldsymbol{p}}$ by substituting $p_i = m\tilde{p_i}$.

We now consider the function $f(\boldsymbol{p}) = \frac{1}{\sum_{i=1}^m p_i}(p_1,\ldots,p_m)$ as in \cite{Dughmi2017}, where the problem of tossing such $f(\boldsymbol{p})$-die is named Bernoulli Race. Their proposed algorithm requires on average $\mathbb{E}[N_D] = m/\sum_{i=1}^m p_i$ tosses of the $m$ coins. After applying the required variable transformation, we then consider $f(\tilde{\boldsymbol{p}}) = \frac{1}{\sum_{i=0}^{m-1} \tilde{p}_i}(\tilde{p}_0,\ldots,\tilde{p}_{m})$ and we can then employ our Dice Enterprise methodology. In this particular problem, $f(\tilde{\boldsymbol{p}})$ is already a multivariate ladder and the transition matrix of the chain constructed as in Proposition \ref{prop:multivariate_markov_chain} is given by

\begin{equation*}
P = \begin{pmatrix}
1-\sum_{i \neq 0} \tilde{p}_i & \tilde{p}_1 & \tilde{p}_2 & \ldots & \tilde{p}_{m+1} \\
\tilde{p}_0 & 1-\sum_{i \neq 1} \tilde{p}_i & \tilde{p}_2 & \ldots & \tilde{p}_{m+1} \\
\vdots & \vdots & \vdots & \vdots & \vdots \\
\tilde{p}_0 & \tilde{p}_1 & \tilde{p}_2 & \ldots & 1-\sum_{i \neq m} \tilde{p}_i \\
\end{pmatrix}
\end{equation*}
Notice that CFTP terminates as soon as either $0,1,\ldots,m-1$ is rolled and continues only when the outcome of the roll is the $m$\textsuperscript{th} face. In this case, each iteration of CFTP is independent of the other and the probability that the algorithm stops is given by
\begin{equation*}
\mathbb{P}(\text{CFTP stops}) = 1 - \frac{1}{m}\sum_{i=1}^m \mathbb{P}(\text{i\textsuperscript{th} coin returns tails}) = \frac{1}{m}\sum_{i=0}^{m-1} p_i 
\end{equation*}
so that $\mathbb{E}[N_{CFTP}] = m/\sum_{i=0}^{m-1} p_i$ and the algorithm is actually equivalent to the one proposed in \cite{Dughmi2017}.

\end{ex}

\section{Conclusions}

The Dice Enterprise algorithm introduced in this paper is a
generalisation of the celebrated Bernoulli Factory algorithms to
rational mappings of categorical distributions. It offers a fully
automated procedure that does not require further user intervention or
case specific design tweaks. Furthermore, in the ``coin to dice'' case
the efficiency of the algorithm can be automatically boosted by
increasing the degree of the target polynomials until the distribution
is log-concave which guarantees fast convergence. The version we developed is based on
Coupling From the Past and enjoys an efficient monotonic
implementation in the ``coin to
dice'' case, however CFTP can be replaced by any other Markov chain
perfect sampling routine, including Fill's interruptible
algorithm. We
demonstrated that several specialised Bernoulli factory algorithms
introduced in literature, such as the two coin algorithm, the logistic
Bernoulli factory or the Bernoulli race can be regarded as special
versions of the Dice Enterprise. 
A natural open problem that follows
from this paper is to design a monotone version of the Dice Enterprise
in the ``dice to dice'' scenario. Further studies may also look into providing bounds for the degree of the decomposition of rational functions into ladders (based on P\'olya positive homogeneous polynomial theorem \cite{Polya1928, Powers2001}) or the number of Bernoulli trials needed to introduce log-concavity when convoluted with a discrete random variable. Understanding of these questions is necessary for obtaining more precise upper bounds on running time of the algorithm. Computing lower bounds for the expected number of rolls required by CFTP -- perhaps through information criteria (c.f. \cite{Mendo2016}) or building on \cite{Huber2016, Karp2000} - would complement the theoretical analysis.

Another question of particular interest is establishing the relation between our approach and that of \cite{Mossel2005}. Both theirs and our work builds on Polya's theorem on homogeneous polynomials, which ensures positivity of all the coefficients and therefore allows for a construction of an equivalent probability distribution in a form that is amenable to simulation via a carefully designed finite automaton, or Markov chain, respectively. The focus of \cite{Mossel2005} is on the theoretical side of characterising distributions through automata rather than on practical algorithm design or efficiency. In particular, the block simulation considered in their Theorem~2.2, and outlined in Proposition~2.5, is closely related to what we termed the naive rejection sampling approach in Example \ref{ex:augmentin_states_faster}. Its cost would scale exponentially in the degree of the involved polynomials. On the other hand, \cite{Mossel2005} poses an open question (Problem 4.1) and asks what is the smallest size of an automaton that simulates $f(p),$ and how to find it. While we do not know the answer to this problem, we conjecture that when $f$ is a rational function with coefficients in $\mathbb{Q},$  the CFTP procedure we designed for simulating a fine and connected ladder $\pi(\boldsymbol{p}): \Delta^m \rightarrow \Delta^k$, is a finite automaton 
with the set of states $S = \{0, \dots, k\} \times \{0, \dots, k\}$ and alphabet $\{0, \dots, m\}$. We believe investigating systematically the connections between these approaches is a research direction that may lead to interesting conclusions.

 
 \section{Acknowledgments}
 We would like to thank anonymous referees for their helpful suggestions that greatly improved the presentation of the paper.
 We thank Susanna Brown, Oliver Johnson and Krzysztof Oleszkiewicz for helpful discussions. K{\L} acknowledges funding form the Royal Society via the University
Research Fellowship scheme. AW has been supported by EPSRC and GM has been supported by EPSRC through the OxWaSP Programme. Finally, AW and K{\L} thank the Warwick Undergraduate Research Scholarship Scheme for supporting the project in its initial stages in the summer of 2013.

\appendix
\section{Background}
\label{sec:appendix_background}
\subsection{Sampling from known categorical distributions} 
\label{sec:appendix_sample_known_distributions}
Sampling from a known distribution $\boldsymbol{\mu} = (\mu_0,\ldots,\mu_k) \in \Delta^k$ is usually done by sampling $U \sim \text{Unif}(0,1)$ and setting
\begin{equation*}
Z = i \quad \text{ if } \quad \sum_{j=0}^{i-1} \mu_j < U \leq \sum_{j=0}^i \mu_j.
\end{equation*}
In the spirit of \cite{Nacu2005,Latuszynski2009}, we can consider $B$, the binary representation of $U$ and notice that this is an iid
sequence of Bern$(1/2)$. Let $B_{1:l}$ denote the first $l$ bits of $U$ and let $(B_{1:l})_{10}$ be its representation in base 10. Clearly $(B_{1:l})_{10} \leq U \leq (B_{1:l})_{10}+2^{-l}$, so that we could set
\begin{equation*}
Z = i \qquad \text{ if } \qquad   \sum_{j=0}^{i-1} \mu_j \leq  (B_{1:l})_{10}
\quad \text{and} \quad (B_{1:l})_{10}+2^{-l} \leq \sum_{j=0}^i \mu_j
\end{equation*}
where $l$ is big enough so that there exists an $i$ such that the condition above is satisfied. 

Therefore, we do not need to have access to a generator of uniform
random variables to sample from a categorical distribution --- it is
enough to obtain a sequence of independent tosses of a fair
coin. Algorithm \ref{alg:fair_coin} is a variation of Von Neumann's
algorithm \cite{VonNeumann1951} that 
outputs a fair coin given access to an iid sequence of rolls of an arbitrary die.

\begin{algorithm}[H]
	\caption{Fair coins from a die}
	\label{alg:fair_coin}
	 \hspace*{\algorithmicindent}\justifying\textbf{Input:} black box to sample from $\boldsymbol{p} \in \Delta^m$. \\
	 \hspace*{\algorithmicindent}\justifying\textbf{Output:} a sample from $\text{Bern}(1/2)$.
	\begin{algorithmic}[1]
		\State{Sample $X_1, X_2 \stackrel{iid}{\sim} \boldsymbol{p}$}
		\If{$X_1 < X_2$} set $Y := 0$
		\ElsIf{$X_1 > X_2$} set $Y := 1$
		\ElsIf{$X_1 = X_2$}{ discard $X_1, X_2$ and GOTO 1}
		\EndIf
		\State{\textbf{Output}  $Y$}
	\end{algorithmic}
\end{algorithm}

Consequently, given a black box to sample from an unknown distribution
$\boldsymbol{p} \in \Delta^m$, Algorithm
\ref{alg:categorical_distribution} outputs a sample from a known distribution $\boldsymbol{\mu} \in
\Delta^k$. In particular, if $\boldsymbol{\mu}$ is a disaggregation of $\boldsymbol{\nu}$, Algorithm
\ref{alg:categorical_distribution} can be used to obtain a sample $Y \sim \boldsymbol{\mu}$
given $X \sim \boldsymbol{\nu}$ as in equation \eqref{eq:disaggregation}. 
\begin{algorithm}[H]
	\caption{Categorical distribution from a die}
	\label{alg:categorical_distribution}
	 \hspace*{\algorithmicindent}\justifying\textbf{Input:} black box to sample from $\boldsymbol{p} \in \Delta^m$. \\
	 \hspace*{\algorithmicindent}\justifying\textbf{Output:} a sample from a known distribution $\boldsymbol{\mu} \in \Delta^k$.
	\begin{algorithmic}[1]
		\State{Sample $Y \sim \text{Bern}(1/2)$ using Algorithm \ref{alg:fair_coin}}
		\State{If $l = 1$ set $B_{1:1} = Y$, otherwise set $B_{1:l} = B_{1:l-1}|Y$}
		\If{there exists an $i \in \{0,\ldots,k\}$ such that 
			\begin{equation*}
			\sum_{j=0}^{i-1} \mu_j \leq  (B_{1:l})_{10} < (B_{1:l})_{10}+2^{-l} \leq \sum_{j=0}^i \mu_j
			\end{equation*} 
		}{ set $Z := i$}
		\Else
		\State{Set $l = l+1$ and GOTO 2}
		\EndIf
		\State{\textbf{Output}  $Z$}
	\end{algorithmic}
\end{algorithm}

Let $L$ be the random number of loops of Algorithm
\ref{alg:categorical_distribution} before terminating. The computation verifies that $\mathbb{P}(L>l) \leq
k2^{-l}$.

\subsection{Coupling from the past} 
\label{sec:appendix_cftp}

Perfect sampling is a well developed
approach \cite{Huber2016} to devise specialised algorithms,
necessarily with random running time, that will output a single draw
exactly from the stationary distribution of a Markov chain, rather than from
its approximation.   
Coupling From the Past (CFTP) \cite{Propp1996} is a pioneering
technique in this field and illustrative for our purposes. The idea behind the method relies on starting the chain at time $-\infty$, so that at present time one would have a sample from the stationary distribution. This may not seem practical, but as pointed out in \cite{Propp1996}, one can make use of coupled chains to decide when to stop tracking the chain back in time. In practice, it is convenient to introduce an update function for the chain. Given a state $i$ and a source of randomness, the update function returns the state of the chain at the next step. Such source of randomness is commonly represented by a single draw from a uniform random variable $U$, as one could then consider its binary representation to have an arbitrary number of uniform random variables and transform them, at least in principle, via inversion sampling. However, for our specific application, it is natural to consider the given die a source of randomness. As discussed in Section \ref{sec:appendix_sample_known_distributions}, we can resort to the die to generate uniform random variables. However, for better clarity, we consider having access to both the die and a uniform random variable  as source of randomness for the update function, defined as follows:

\begin{defi}[Update function]
\label{def:update_function}
Let $(X_t)_{t \in \mathbb{N}}$ be a Markov chain on $\Omega = \{0,1,\ldots,k\}$. Assume $\boldsymbol{p} \in \Delta^m$ and let $B \sim \boldsymbol{p}$ and $U \sim \text{Unif}(0,1)$. A function
\begin{equation*}
\phi: \Omega \times \{0,1,\ldots,m\} \times [0,1] \rightarrow \Omega
\end{equation*}
is an \emph{update function} for the Markov chain $(X_t)_{t \in \mathbb{N}}$ if
\begin{equation*}
\mathbb{P}(X_{t+1} = j | X_t = i) = \mathbb{P}(\phi(i,B,U) = j), \qquad \forall i,j \in \Omega.
\end{equation*}
\end{defi} 
We will write $\phi_t(x,\boldsymbol{B},\boldsymbol{U}) = \phi(\phi(\ldots(\phi(x,B_1,U_1),B_2,U_2),\ldots),B_t,U_t)$ to indicate the state of the chain after $t$ steps when starting from $x$.

Given an update function $\phi$, CFTP is implementable via Algorithm \ref{alg:CFTP}. Under mild assumptions, namely that there is a positive probability of termination, Algorithm \ref{alg:CFTP} produces samples from the stationary distribution of the Markov chain \cite{Propp1996, Huber2016}.

\begin{algorithm}[H]
	\caption{Coupling From the Past}
	\label{alg:CFTP}
	\hspace*{\algorithmicindent}\justifying\textbf{Input}: an update function
        $\phi$ for a  Markov chain $(X_t)_{t \in \mathbb{N}}$ on
        $\Omega = \{0,\ldots,k\}$ with unique stationary distribution $\pi$;
        a black box to
        sample from $\boldsymbol{p} \in \Delta^m$.
	\\
	\hspace*{\algorithmicindent}\justifying\textbf{Output}: A sample from $\pi$.
	\begin{algorithmic}[1]
		\State{Set $T \gets 1$}
		\InlineFor{$i = 0,\ldots,k$}{$X_0^{(i)} \gets i$}
		\Repeat
			\State{Sample independently $B_{-T} \sim \boldsymbol{p}$ and $U_{-T} \sim \text{Unif}(0,1)$}
			\InlineFor{$i = 0,\ldots,k-1$}{$X_0^{(i)} \gets \phi_T(i,(B_{-T},\ldots,B_{-1}),(U_{-T},\ldots,U_{-1}))$}
			\State{Set $T \gets T+1$}
		\Until{$X_0^{(0)} = X_0^{(1)} = \ldots = X_0^{(k)}$}
		\State{\textbf{Output} $X_0^{(0)}$}
	\end{algorithmic}
\end{algorithm}

Notice that CFTP needs to keep track of the trajectories of $k$
coupled chains. If $k$ is large implementing the algorithm may become
infeasible. A more efficient version of CFTP can be designed for monotonic
Markov chains~\cite{Propp1996}. In particular, assume that the state space $\Omega$ of
the Markov chain $(X_t)_{t \in \mathbb{N}}$ admits a partial order
$\preceq$, and there exist the maximum and the minimum states, say $0$
and $k$, respectively; i.e., $\forall j \in \Omega$, $j \preceq k$ and
$0 \preceq j$. The monotonic update function is defined as follows.
\begin{defi}[Monotonic update function]
An update function $\phi$ as in Definition \ref{def:update_function} is \emph{monotonic} if for all $B \in \{0,1,\ldots,m\}$ and $U \in [0,1]$ 
\begin{equation} \label{eqn:mono}
i \preceq j  \quad \Longrightarrow \quad \phi(i,B,U) \preceq \phi(j,B,U).
\end{equation}
\end{defi}

In the monotonic case it is enough to track coalescence of just two chains,
started from the minimum and the maximum state. Algorithm
\ref{alg:monotonic_CFTP} presents CFTP with monotonic update function $\phi$.

\begin{algorithm}[H]
	\caption{Monotonic Coupling From the Past}
	\label{alg:monotonic_CFTP}
	\hspace*{\algorithmicindent}\justifying\textbf{Input}: a
        monotonic update function $\phi$ for a Markov chain $(X_t)_{t
          \in \mathbb{N}}$ on $\Omega = \{0,\ldots,k\}$ with minimum and maximum states, $0$ and $k$
        respectively and with unique stationary
        distribution $\pi$; a black box to sample from $\boldsymbol{p} \in \Delta^m$.
	\\
	\hspace*{\algorithmicindent}\justifying\textbf{Output}: A sample from $\pi$.
	\begin{algorithmic}[1]
		\State{Set $T \gets 1$, $X_0 \gets 0$ and $Y_0 \gets k$}	
		\Repeat
			\State{Sample independently $B_{-T} \sim \boldsymbol{p}$ and $U_{-T} \sim \text{Unif}(0,1)$}
			\State{Set $X_0 \gets \phi_T(1,(B_{-T},\ldots,B_{-1}),(U_{-T},\ldots,U_{-1}))$}
			\State{Set $Y_0 \gets \phi_T(k,(B_{-T},\ldots,B_{-1}),(U_{-T},\ldots,U_{-1}))$}
			\State{Set $T \gets T+1$}\label{line:time_monotonic_cftp}
		\Until{$X_0 = Y_0$}
		\State{\textbf{Output} $X_0$}
	\end{algorithmic}
\end{algorithm}

\begin{remark}
Commonly, monotonic CFTP is implemented so that $T$ is doubled at each iteration (replacing line \ref{line:time_monotonic_cftp} of the algorithm with $T \gets 2T$). This allows for a binary search of the coalescence time, while trying to minimise the simulation effort. The choice is arbitrary: we decided to keep $T \gets T+1$ as in applications rolling the die may be the most time-consuming step.
\end{remark}

\section{Proofs} \label{sec:appendix_proofs}

Define the scaled by $d$ discrete $m$ dimensional simplex as
\begin{equation*}
\Lambda_d^m = \left\{ \boldsymbol{n} = (n_0,\ldots,n_{m}) \in \{0,1,\ldots,d\}^{m+1}: \sum_{i=0}^{m} n_i = d \right\}.
\end{equation*}

%
%


\noindent \textbf{Proof of Proposition \ref{prop:multivariate_impose_connected_fineness_condition}}
\begin{proof}[\unskip\nopunct]
By construction $\pi'$ is a fine ladder on $\Omega' =
\{0,\ldots,w\}$. Since one augmentation operation yields a ladder
sampling from which is equivalent to sampling from $\pi$, so does
d-fold augmentation. It remains to show that $\pi'$ is connected. To
this end note that each state of $\pi'$ is of the form
$C \pi_i(\boldsymbol{p})  {{d}\choose{\boldsymbol{n}}} \prod_{l=0}^{m}
p_l^{n_l}$ for some constant $C$, some $i \in \{0, ..., k\}$ and some $\boldsymbol{n} \in \Lambda_d^m$. Define sets $A_0,\ldots,A_{k}$ as
\begin{equation}
\label{eq:sets_disaggregation_ladder}
A_i = \{a \in \Omega': \pi'_a(\boldsymbol{p}) = C_a\pi_i(\boldsymbol{p})
{{d}\choose{\boldsymbol{n}}} \prod_{l=0}^{m} p_l^{n_l} \text{ for some }
\boldsymbol{n} \in \Lambda_d^m, \; C_a \in \mathbb{R} \}.
\end{equation}
First notice that for a fixed $i$ all the states in $A_i$ are
connected by construction due to $d-$fold augmentation. It is then
enough to show that $A_i \cap A_j \neq \emptyset$ for all $j \neq
i$. Indeed, let $\boldsymbol{n}_i$ and $\boldsymbol{n}_j$ be the
degree of $\pi_i(\boldsymbol{p})$ and $\pi_j(\boldsymbol{p})$
respectively. Then, the numerators of
$\pi_j(\boldsymbol{p}){{d}\choose{\boldsymbol{n}_i}} \prod_{l=0}^{m}
p_l^{n_{i,l}} $ and
$\pi_i(\boldsymbol{p}){{d}\choose{\boldsymbol{n}_j}} \prod_{l=0}^{m}
p_l^{n_{j,l}} $ have the same degree $\boldsymbol{n}_i +
\boldsymbol{n}_j$ and the respective state $a \in \Omega'$ with
probability $\pi_{a'}(\boldsymbol{p})$ of degree $\boldsymbol{n}_i +
\boldsymbol{n}_j$ satisfies $a \in  A_i \cap A_j.$
\end{proof}

\begin{lemma}[P\'olya \cite{Polya1928}]
\label{lemma:polya}
Let $f: \Delta^{m} \rightarrow \mathbb{R}$ be a homogeneous and positive polynomial in the variables $p_0,\ldots,p_{m}$, i.e. all the monomials of the polynomial have the same degree. Then, for all sufficiently large $n$, all the coefficients of $(p_0+\ldots+p_{m})^nf(p_0,\ldots,p_{m})$ are positive.
\end{lemma}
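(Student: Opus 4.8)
The plan is to establish this classical P\'olya positivity statement by writing out the coefficients of $(p_0+\cdots+p_m)^n f$ explicitly and controlling them with a compactness argument on $\bar{\Delta}^m$. Write $d=\deg f$ and $f(\boldsymbol{p})=\sum_{\boldsymbol{\alpha}\in\Lambda_d^m}a_{\boldsymbol{\alpha}}\boldsymbol{p}^{\boldsymbol{\alpha}}$. By the multinomial theorem, for each $\boldsymbol{\gamma}\in\Lambda_{n+d}^m$ the coefficient of $\boldsymbol{p}^{\boldsymbol{\gamma}}$ in $(p_0+\cdots+p_m)^n f(\boldsymbol{p})$ is
\[
c_{\boldsymbol{\gamma}}=\sum_{\boldsymbol{\alpha}\in\Lambda_d^m}a_{\boldsymbol{\alpha}}\binom{n}{\boldsymbol{\gamma}-\boldsymbol{\alpha}},
\]
where $\binom{n}{\boldsymbol{\gamma}-\boldsymbol{\alpha}}$ is the multinomial coefficient $n!/\prod_{j}(\gamma_j-\alpha_j)!$ and is taken to be $0$ whenever $\alpha_j>\gamma_j$ for some $j$. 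The first step is the identity $\binom{n}{\boldsymbol{\gamma}-\boldsymbol{\alpha}}=\frac{n!}{\prod_j\gamma_j!}\prod_j\gamma_j^{\underline{\alpha_j}}$, where $\gamma_j^{\underline{\alpha_j}}=\gamma_j(\gamma_j-1)\cdots(\gamma_j-\alpha_j+1)$ is the falling factorial; note it vanishes exactly when $\alpha_j>\gamma_j$, so the convention above is automatic. Factoring out the strictly positive number $\frac{n!}{\prod_j\gamma_j!}$, it then suffices to prove that $g(\boldsymbol{\gamma}):=\sum_{\boldsymbol{\alpha}\in\Lambda_d^m}a_{\boldsymbol{\alpha}}\prod_j\gamma_j^{\underline{\alpha_j}}$ is strictly positive for every $\boldsymbol{\gamma}\in\Lambda_{n+d}^m$ as soon as $n$ is large.

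The second step is to compare $g$ with $f$. Since $\gamma_j^{\underline{\alpha_j}}=\gamma_j^{\alpha_j}+(\text{terms of lower degree in }\gamma_j)$, expanding the products yields $g(\boldsymbol{\gamma})=f(\boldsymbol{\gamma})+E(\boldsymbol{\gamma})$, where $E$ is a polynomial in $\gamma_0,\dots,\gamma_m$ of total degree at most $d-1$ whose coefficients are explicit integer combinations of the $a_{\boldsymbol{\alpha}}$ (via Stirling numbers of the first kind) and hence depend only on $f$. Set $M:=|\boldsymbol{\gamma}|=n+d$ and $\boldsymbol{y}:=\boldsymbol{\gamma}/M\in\bar{\Delta}^m$. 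Homogeneity gives $f(\boldsymbol{\gamma})=M^d f(\boldsymbol{y})$, while $\gamma_j\le M$ on $\Lambda_M^m$ yields a uniform bound $|E(\boldsymbol{\gamma})|\le C\,M^{d-1}$ with $C=C(f)$ independent of $\boldsymbol{\gamma}$ and $M$. Since $\bar{\Delta}^m$ is compact and $f$ is continuous and strictly positive on it, $\mu:=\min_{\bar{\Delta}^m}f>0$. Therefore
\[
g(\boldsymbol{\gamma})\ \ge\ M^d\mu-C\,M^{d-1}\ =\ M^{d-1}\bigl(M\mu-C\bigr),
\]
which is $>0$ whenever $M>C/\mu$, i.e. for every $n>C/\mu-d$. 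As this threshold is independent of $\boldsymbol{\gamma}$, for all such $n$ every coefficient $c_{\boldsymbol{\gamma}}$ is positive, proving the lemma (with an explicit bound on how large $n$ must be).

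The step I expect to need the most care is the interpretation of ``positive'' in the hypothesis: the argument genuinely requires $f>0$ on the \emph{closed} simplex $\bar{\Delta}^m$, not just on the open one, because the normalised exponent vectors $\boldsymbol{y}=\boldsymbol{\gamma}/M$ are generically boundary points (many $\gamma_j$ may vanish), and it is precisely compactness of $\bar{\Delta}^m$ together with strict positivity there that produces the constant $\mu>0$. (Indeed, if $f$ had a zero at an interior point $\boldsymbol{p}^{\ast}\in\Delta^m$, then $(p_0+\cdots+p_m)^n f$ would vanish at $\boldsymbol{p}^{\ast}$ and so could never have all positive coefficients.) For a homogeneous polynomial this is equivalent to $f>0$ on the nonnegative orthant minus the origin, the standard hypothesis of P\'olya's theorem; in the application of interest $C(\boldsymbol{p})$ has no root in $\bar{\Delta}^m$ and is positive somewhere on $\Delta^m$, hence positive throughout $\bar{\Delta}^m$ by continuity and connectedness, so this reading is harmless. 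A minor bookkeeping point is the uniformity over $\boldsymbol{\gamma}\in\Lambda_M^m$ of both the error bound and the threshold on $n$, which is immediate from $\gamma_j\le M$. Finally, I note that the computation above is essentially the statement that $f$ is reconstructed by its degree-$M$ Bernstein expansion with coefficients $c_{\boldsymbol{\gamma}}/\binom{M}{\boldsymbol{\gamma}}$ converging uniformly to $f>0$; this is a slightly more conceptual phrasing but rests on the same estimates, so I would keep the direct computation as the main line of argument.
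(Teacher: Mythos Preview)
The paper does not supply a proof of this lemma; it is simply quoted as P\'olya's classical result \cite{Polya1928} and used as a black box in the proof of Lemma~\ref{lemma:decomposition}. So there is no ``paper's own proof'' to compare against.

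Your argument is a correct and standard proof of P\'olya's theorem, essentially the one found in Hardy--Littlewood--P\'olya and refined by Powers--Reznick: write the coefficient of $\boldsymbol{p}^{\boldsymbol{\gamma}}$ in $(p_0+\cdots+p_m)^n f$ as a positive multiple of a falling-factorial analogue of $f(\boldsymbol{\gamma})$, recognise this as $f(\boldsymbol{\gamma})$ plus a lower-order error, rescale to $\bar\Delta^m$, and invoke compactness. The bookkeeping (the falling-factorial identity, the degree of $E$, the uniform bound $|E|\le C M^{d-1}$) is all fine, and you are right that the argument delivers an explicit, $\boldsymbol{\gamma}$-independent threshold for $n$.

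Your caveat about needing $f>0$ on the \emph{closed} simplex is well taken and is indeed the correct hypothesis for P\'olya's theorem; the paper's phrasing ``$f:\Delta^m\to\mathbb{R}$ \ldots\ positive'' is slightly loose on this point. Your remark that in the paper's application the relevant polynomials (built from $C(\boldsymbol{p})$, which by Definition~\ref{def:multivariate_ladder} has no root in $\bar\Delta^m$) do satisfy closed-simplex positivity is the right way to reconcile this, though strictly speaking in Lemma~\ref{lemma:decomposition} the polynomials $D$, $E$, $E-D$ to which P\'olya is applied are only asserted to be positive on the open simplex, so some additional care would be needed there---but that is a gap in the paper's use of the lemma, not in your proof of it.
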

\begin{lemma}
\label{lemma:decomposition}
Let $f: \Delta^{m} \rightarrow (0,1)$ be a rational function over $\mathbb{R}$. Then, there exist homogeneous polynomials 
\begin{align*}
d(\boldsymbol{p}) &= d(p_0,\ldots,p_{m}) = \sum_{\boldsymbol{n} \in \Lambda_d^m} d_{\boldsymbol{n}} \prod_{j=0}^{m} p_j^{n_j}, \\ e(\boldsymbol{p}) &= e(p_0,\ldots,p_{m}) = \sum_{\boldsymbol{n} \in \Lambda_d^m} e_{\boldsymbol{n}} \prod_{j=0}^{m} p_j^{n_j},
\end{align*}
where $d_{\boldsymbol{n}}$ and $e_{\boldsymbol{n}}$ are real coefficients such that $0 \leq d_{\boldsymbol{n}} \leq e_{\boldsymbol{n}}$ and $f(\boldsymbol{p}) = d(\boldsymbol{p})/e(\boldsymbol{p})$. We will refer to $d$ as the degree of the decomposition.
\end{lemma}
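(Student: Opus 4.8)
The plan is to reduce the statement, via the standard simplex homogenization trick, to an application of P\'olya's Lemma \ref{lemma:polya}. First I would write the rational function in the form $f(\boldsymbol{p}) = D(\boldsymbol{p})/E(\boldsymbol{p})$ with $D$ and $E$ polynomials with real coefficients and $E$ nowhere zero on $\bar{\Delta}^m$ (this is possible since $f$ maps into $(0,1)\subset\mathbb{R}$, so no pole occurs on the closed simplex, and by clearing denominators we may take $D,E$ relatively prime). The key point is that on $\Delta^m$ we may freely multiply numerator and denominator by any power of $p_0+\cdots+p_m$, since that quantity equals $1$ there; this lets us turn $D$ and $E$ into homogeneous polynomials. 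Concretely, if $D$ has degree $d_D$ and $E$ has degree $d_E$, set $\delta=\max\{d_D,d_E\}$ and replace each monomial of degree $r$ in $D$ (resp. $E$) by that monomial times $(p_0+\cdots+p_m)^{\delta-r}$, expanded via the multinomial theorem; this produces homogeneous polynomials $\widetilde D,\widetilde E$ of common degree $\delta$ with $\widetilde D/\widetilde E = D/E = f$ on $\Delta^m$.

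Next I would arrange the coefficient inequalities $0\le d_{\boldsymbol n}\le e_{\boldsymbol n}$. Since $f>0$ on $\Delta^m$ and $\widetilde E$ has constant sign there (WLOG positive, absorbing a sign into both), the homogeneous polynomial $\widetilde D$ is positive on $\Delta^m$, hence so is $\widetilde E-\widetilde D$ (because $f<1$ forces $\widetilde D<\widetilde E$ on $\Delta^m$). Now apply Lemma \ref{lemma:polya} to the positive homogeneous polynomials $\widetilde D$ and $\widetilde E - \widetilde D$: for all sufficiently large $n$, multiplying each by $(p_0+\cdots+p_m)^n$ yields polynomials all of whose coefficients are strictly positive. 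Take $n$ large enough to work for both simultaneously and set
\begin{align*}
d(\boldsymbol p) &= (p_0+\cdots+p_m)^n\,\widetilde D(\boldsymbol p),\\
e(\boldsymbol p) &= (p_0+\cdots+p_m)^n\,\widetilde E(\boldsymbol p),
\end{align*}
both homogeneous of degree $d:=\delta+n$. Writing $d(\boldsymbol p)=\sum_{\boldsymbol n\in\Lambda_d^m} d_{\boldsymbol n}\prod p_j^{n_j}$ and $e(\boldsymbol p)=\sum e_{\boldsymbol n}\prod p_j^{n_j}$, we have $d_{\boldsymbol n}\ge 0$ by P\'olya applied to $\widetilde D$, and $e_{\boldsymbol n}-d_{\boldsymbol n}\ge 0$ since $e-d=(p_0+\cdots+p_m)^n(\widetilde E-\widetilde D)$ has nonnegative coefficients by P\'olya applied to $\widetilde E-\widetilde D$; hence $0\le d_{\boldsymbol n}\le e_{\boldsymbol n}$. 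Finally $d/e = \widetilde D/\widetilde E = f$ on $\Delta^m$, which is the claim.

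The only real subtlety — and the step I expect to need the most care — is justifying that $E$ (equivalently $\widetilde E$) is sign-definite on $\bar{\Delta}^m$, so that $\widetilde D$ and $\widetilde E-\widetilde D$ are genuinely \emph{positive} (not merely nonnegative) homogeneous polynomials on the simplex, as P\'olya's Lemma \ref{lemma:polya} requires. This is where the hypotheses that $f$ is a well-defined function into the open interval $(0,1)$ for every $\boldsymbol p\in\Delta^m$ get used: positivity and finiteness of $f$ on $\Delta^m$ rule out zeros of the denominator there and force the strict inequalities $0<\widetilde D<\widetilde E$ pointwise on $\Delta^m$; a short compactness/continuity argument then extends the needed strict positivity to the closure, or alternatively one applies P\'olya directly on $\Delta^m$ after checking the coefficient-sign statement is a closed condition. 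Everything else is bookkeeping with the multinomial theorem.
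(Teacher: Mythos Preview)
Your proposal is correct and follows essentially the same route as the paper: write $f$ as a ratio of relatively prime polynomials, argue sign-definiteness from $f\in(0,1)$, homogenize each monomial via the multinomial theorem using $(p_0+\cdots+p_m)^{\delta-r}$, and then apply P\'olya's Lemma~\ref{lemma:polya} to $\widetilde D$ and $\widetilde E-\widetilde D$ to force the coefficient inequalities $0\le d_{\boldsymbol n}\le e_{\boldsymbol n}$. The only cosmetic difference is that the paper also applies P\'olya to $\widetilde E$ itself, which is redundant once you have it for $\widetilde D$ and $\widetilde E-\widetilde D$; your flagging of the boundary-positivity subtlety is a point the paper leaves implicit.
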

\begin{proof}
The lemma is a variation of Lemma 2.7 of \cite{Mossel2005}, where $m=1$ and coefficients are integers, and the proof follows the reasoning therein. 

As $f(\boldsymbol{p})$ is a rational function, it may be written as 
\begin{equation*}
f(\boldsymbol{p}) = \frac{\overline{D}(\boldsymbol{p})}{\overline{E}(\boldsymbol{p})},
\end{equation*}
and we can assume that $\overline{D}(\boldsymbol{p})$ and $\overline{E}(\boldsymbol{p})$ are relatively prime polynomials. Since $f(\boldsymbol{p}) \in (0,1)$ for all $\boldsymbol{p} \in \Delta^{m}$ and $\overline{D}(\boldsymbol{p})$ does not share any common root with $\overline{E}(\boldsymbol{p})$, it follows that $\overline{D}(\boldsymbol{p})$ and $\overline{E}(\boldsymbol{p})$ do not change sign in $\Delta^{m}$ so that we can assume without loss of generality that $\overline{D}(\boldsymbol{p})$ and $\overline{E}(\boldsymbol{p})$ are positive polynomials. Let $d_0$ be the maximum degree of the polynomials $\overline{D}(\boldsymbol{p})$ and $\overline{E}(\boldsymbol{p})$. A general representation of the polynomials is given by
\begin{equation*}
\overline{D}(\boldsymbol{p}) = \sum_{i=0}^d \sum_{\boldsymbol{n} \in \Lambda^m_i} a_{\boldsymbol{n}} \prod_{j=0}^{m} p_j^{n_j}, \qquad  \overline{E}(\boldsymbol{p}) = \sum_{i=0}^d \sum_{\boldsymbol{n} \in \Lambda^m_i} b_{\boldsymbol{n}} \prod_{j=0}^{m} p_j^{n_j}.
\end{equation*}
Notice that in general $\overline{D}(\boldsymbol{p})$ and
$\overline{E}(\boldsymbol{p})$ are not homogeneous polynomials, but it
is possible to increase the degree of each term of the summation to be equal to $d_0$. In
fact, since $p_0+\ldots+p_{m} = 1$, one can use the multinomial theorem to define homogeneous polynomials $D(\boldsymbol{p})$ and $E(\boldsymbol{p})$ as
\begin{align*}
\overline{D}(\boldsymbol{p}) &= \sum_{i=0}^d \sum_{\boldsymbol{n} \in \Lambda^m_i} a_{\boldsymbol{n}}(p_0+\ldots+p_{m})^{d-i} \prod_{j=0}^{m} p_j^{n_j} \\
&= \sum_{i=0}^d \sum_{\boldsymbol{n} \in \Lambda^m_i} \sum_{\boldsymbol{n}' \in \Lambda_{d-i}^m} a_{\boldsymbol{n}} {{d-i}\choose{\boldsymbol{n}'}} \prod_{j=0}^{m} p_j^{n_j + n'_j} \\
&=  \sum_{\boldsymbol{n} \in \Lambda_d^m} d_{\boldsymbol{n}} \prod_{j=0}^{m} p_j^{n_j} =: D(\boldsymbol{p}),
\end{align*}
where 
\begin{equation*}
d_{\boldsymbol{n}} = \sum_{i=0}^d \sum_{\boldsymbol{\tilde{n}} \in \Lambda^m_i} \sum_{\boldsymbol{n}' \in \Lambda_{d-i}^m: \boldsymbol{\tilde{n}}+\boldsymbol{n}' = \boldsymbol{n}} a_{\boldsymbol{\tilde{n}}} {{d-i}\choose{\boldsymbol{n}'}}.
\end{equation*}
Analogously

\begin{align*}
\overline{E}(\boldsymbol{p}) &= \sum_{i=0}^d \sum_{\boldsymbol{n} \in \Lambda^m_i} b_{\boldsymbol{n}}(p_0+\ldots+p_{m})^{d-i} \prod_{j=0}^{m} p_j^{n_j} =  \sum_{\boldsymbol{n} \in \Lambda_d^m} e_{\boldsymbol{n}} \prod_{j=0}^{m} p_j^{n_j} := E(\boldsymbol{p}).
\end{align*}
Notice that $D(\boldsymbol{p})$ and $E(\boldsymbol{p})$ are positive polynomials. Moreover, since $f(\boldsymbol{p}) < 1$, it follows that also $E(\boldsymbol{p}) - D(\boldsymbol{p})$ is a positive polynomial. Therefore, by Lemma \ref{lemma:polya} there exists a sufficiently large $n,$ such that the polynomials $d(\boldsymbol{p}) = (p_0+\ldots+p_{m})^nD(\boldsymbol{p}), \;$ $e(\boldsymbol{p}) = (p_0+\ldots+p_{m})^nE(\boldsymbol{p})$ and $e(\boldsymbol{p}) - d(\boldsymbol{p}),$ all have positive coefficients. Hence, as required, $0 \leq d_{\boldsymbol{n}} \leq e_{\boldsymbol{n}}$ and 
\begin{equation}\label{eq:frac_of_f}
f(\boldsymbol{p}) = \frac{\overline{D}(\boldsymbol{p})}{\overline{E}(\boldsymbol{p})} = \frac{D(\boldsymbol{p})}{E(\boldsymbol{p})} = \frac{d(\boldsymbol{p})}{e(\boldsymbol{p})}.
\end{equation}
The degree of the decomposition is therefore $d=d_0+n$.
\end{proof}

\noindent \textbf{Proof of Theorem \ref{thm:decomposition_multivariate}}
\begin{proof}[\unskip\nopunct]
Since $f(\boldsymbol{p}) = (f_0(\boldsymbol{p}),\ldots,f_{v}(\boldsymbol{p}))$ is a rational function, we can apply Lemma \ref{lemma:decomposition} to each $f_i(\boldsymbol{p})$ and write
\begin{equation*}
f(\boldsymbol{p}) = \left( \frac{d_0(\boldsymbol{p})}{e_0(\boldsymbol{p})}, \frac{d_1(\boldsymbol{p})}{e_1(\boldsymbol{p})},\ldots,\frac{d_{v}(\boldsymbol{p})}{e_{v}(\boldsymbol{p})} \right).
\end{equation*}
Let $C(\boldsymbol{p})$ be the lowest common multiple of the denominators $e_i(\boldsymbol{p})$ and express $f(\boldsymbol{p})$ as 
\begin{equation*}
f(\boldsymbol{p}) = \frac{1}{C(\boldsymbol{p})}(g_0(\boldsymbol{p}),\ldots,g_{v}(\boldsymbol{p})).
\end{equation*}
Assume w.l.o.g. that each polynomial $g_i(\boldsymbol{p})$ has degree $d$ (if this is not the case, let $d_i$ be the degree of $g_i(\boldsymbol{p})$ and multiply it by $(p_0+\ldots +p_{m})^{d-d_i}$) and write
\begin{equation}
\label{eq:thm_decomposition_homogeneous}
\frac{g_i(\boldsymbol{p})}{C(\boldsymbol{p})} = \frac{1}{C(\boldsymbol{p})} \sum_{\boldsymbol{n} \in \Lambda_d^m} a_{i,\boldsymbol{n}} \prod_{j=0}^{m} p_j^{n_j}.
\end{equation}
Having applied Lemma \ref{lemma:decomposition} it follows $a_{i,\boldsymbol{n}} \geq 0$ for all $i \in \{0,\ldots,v\}$, $\boldsymbol{n} \in \Lambda_d^m$. Therefore, we can construct a distribution $\pi': \Delta^m
\rightarrow \Delta^{w}$ on $\Omega' = \{0,\ldots,w\}$, where $w < (v+1){{d+m}\choose{m}}$ and where each state is one
term of the summation in \eqref{eq:thm_decomposition_homogeneous} for
a fixed $i$ and thus of the form $\frac{1}{C(\boldsymbol{p})} a_{i,\boldsymbol{n}} \prod_{j=0}^{m-1}
p_j^{n_j}$. 

By construction $\pi'$ is a disaggregation of $f$. Indeed, consider $v$ sets $A_0,\ldots,A_{v}$ defined as
\begin{equation*}
A_i = \{a \in \Omega': \pi'_a(\boldsymbol{p}) = \frac{1}{C(\boldsymbol{p})} a_{i,\boldsymbol{n}} \prod_{j=0}^{m} p_j^{n_j} \text{ for a } \boldsymbol{n} \in \Lambda_d^m \}.
\end{equation*}
It then follows
\begin{equation*}
f_i(\boldsymbol{p}) = \frac{g_i(\boldsymbol{p})}{C(\boldsymbol{p})} = \sum_{h \in A_i} \pi'_h(\boldsymbol{p}) = \frac{1}{C(\boldsymbol{p})} \sum_{\boldsymbol{n} \in \Lambda_d^m} a_{i,\boldsymbol{n}} \prod_{j=0}^{m} p_j^{n_j}.
\end{equation*}
 By discarding any null term in $\pi'(\boldsymbol{p})$,
it follows that $\pi'$ is a multivariate ladder. Finally, via
Proposition
\ref{prop:multivariate_impose_connected_fineness_condition} we
construct a fine and connected multivariate ladder $\pi: \Delta^m \rightarrow \Delta^k$ where $k < \min\{(w+1)(m+1)^d, {{2d+m}\choose{m}}\}$, such that
sampling from each $f$, $\pi'$ and $\pi$ is equivalent.

\end{proof}

\noindent \textbf{Proof of Proposition \ref{prop:multivariate_markov_chain}}
\begin{proof}[\unskip\nopunct]
We shall prove the result by showing that $P$ is a stochastic matrix and that the detailed balance condition is satisfied for all $\boldsymbol{p} \in \Delta^m$. Recall that the off-diagonal elements of $P$ are given by the off-diagonal elements of $V \circ W$ where $\circ$ denotes the entrywise product, $W$ is defined in equation \eqref{eq:transition_matrix_W} and $V$ is the output of Algorithm \ref{alg:construction_markov_chain}. We first prove that
\begin{equation*}
\sum_{j \in \mathcal{N}_b(i)} V_{i,j} \leq 1, \quad \forall b \in \{0,\ldots,m\}, i \in \Omega.
\end{equation*}
Notice that by how the weights $\mathcal{W}_b(i)$ are defined within the algorithm, we have $\sum_{j \in \mathcal{N}_b(i)} V_{i,j} = \mathcal{W}_b(i)$.

Having fixed $i$ and $b$, assume that one of the $V_{i,j}$, where $j \in \mathcal{N}_b(i)$, is obtained in line \ref{alg:line:max} of the algorithm. Denote by $\mathcal{W}^\star_b(i)$ the new value of $\mathcal{W}_b(i)$ after it has been updated for all $j \in \mathcal{N}_b(i)$. It follows
\begin{align*}
\mathcal{W}^\star_b(i) = \mathcal{W}_b(i) + \sum_{j \in \mathcal{N}_b(i)} \frac{R_j}{\mathcal{S}_b(i)}  = \mathcal{W}_b(i) + \sum_{j \in \mathcal{N}_b(i)} \frac{R_j}{\sum_{h \in \mathcal{N}_b(i)} R_h} (1-\mathcal{W}_b(i)) = 1,
\end{align*}
where the value of $S_b(i)$ is given in line 9 of the algorithm.
At this point the algorithm has assigned a value to $V_{i,j}$ for all $j \in \mathcal{N}_b(i)$ and thus $\sum_{j \in \mathcal{N}_b(i)} V_{i,j} = \mathcal{W}^\star_b(i) = 1$.

Assume now that all the $V_{i,j}$ for $j \in \mathcal{N}_b(i)$ have been assigned in line \ref{alg:line:reverse} of the algorithm. For fixed $i$, we then have that $j \in \mathcal{N}_b(i)$ and let $d \in \{0,\ldots,m\}$ such that $i \in \mathcal{N}_d(j)$. Then $V_{j,i}$ is assigned in line \ref{alg:line:max} of the algorithm. Denote the new value of $\mathcal{W}_b(i)$ assigned in line \ref{alg:line:reverse} of the algorithm as $\mathcal{W}^\star_b(i)$. It follows
\begin{equation*}
\mathcal{W}^\star_b(i) = \mathcal{W}_b(i) + \frac{R_j}{\mathcal{S}_d(j)} \leq \mathcal{W}_b(i) + \frac{R_j}{\mathcal{S}_b(i)} \leq \mathcal{W}_b(i) + \sum_{j \in \mathcal{N}_b(i)} \frac{R_j}{\mathcal{S}_b(i)} = 1,
\end{equation*}
where the fact that $\mathcal{S}_d(j) \geq \mathcal{S}_b(i)$ follows from the fact that $b$ and $i$ are chosen in line \ref{alg:line_max_arg} of the algorithm to maximise $\mathcal{S}_b(i)$. The value of $\mathcal{W}_b(i)$ will then always be less or equal than 1, so that $\sum_{j \in \mathcal{N}_b(i)} V_{i,j} = \mathcal{W}_b(i) \leq 1$.

We then have
\begin{equation*}
\sum_{\substack{j=0 \\ j\neq i}}^{k} P_{i,j} = \sum_{b=0}^{m} \sum_{j \in \mathcal{N}_b(i)} V_{i,j}p_b \leq \sum_{b=0}^{m} p_b = 1,
\end{equation*}
as required. It is now enough to prove that $\pi(\boldsymbol{p})$ satisfies the detailed balance condition for all $\boldsymbol{p} \in \Delta^m$. If $j \not\in \mathcal{N}(i)$, then $P_{i,j} = P_{j,i} = 0$ and the
balance condition is trivially satisfied. For $j \in \mathcal{N}(i)$
we have
\begin{equation*}
\frac{\pi_j(\boldsymbol{p})}{\pi_i(\boldsymbol{p})} = \frac{ R_j \prod_{h=0}^{m} p_h^{n_{j,h}}}{ R_i  \prod_{h=0}^{m} p_h^{n_{i,h}}} 
= \frac{R_jp_b}{R_ip_c},
\end{equation*}
and by equation \eqref{eq:transition_matrix_W}, $W_{i,j}/W_{j,i} = p_b/p_c$. The fact that $V_{i,j}/V_{j,i} = R_j/R_i$ follows directly from how these values are assigned in the algorithm for the pair $i, j$ in lines \ref{alg:line:max} and \ref{alg:line:reverse}. Given the connectedness condition, $\pi(\boldsymbol{p})$ is also the unique limiting distribution.
\end{proof}

\noindent \textbf{Proof of Proposition \ref{prop:Peskun_optimal}}
\begin{proof}[\unskip\nopunct]
By contradiction, assume that there exists a different reversible Markov chain with transition matrix $Q$ that has the same adjacency structure and stationary distribution as the $P$-chain, and such that $Q \succeq_P P$. It follows that also $Q$ has a similar decomposition as in equation \eqref{eq:transition_matrix_product} and the off-diagonal elements of $Q$ will be the same as the entries of $\tilde{V} \circ W$, where $\circ$ denotes the entrywise product and with $W$ as in equation \eqref{eq:transition_matrix_W}, while $\tilde{V}$ is a matrix of real numbers. Since $Q \succeq_P P$ and $Q \neq P$, there must exist indices $i,j$ such that $\tilde{V}_{i,j} > V_{i,j}$. We distinguish two cases:
\begin{itemize}
\item The value of $V_{i,j}$ is assigned in line \ref{alg:line:max} of Algorithm \ref{alg:construction_markov_chain}. Then, let $b \in \{0,\ldots,m\}$ such that $j \in \mathcal{N}_b(i)$ and notice that by how the algorithm is designed we have $\sum_{j \in \mathcal{N}_b(i)} V_{i,j} = 1$ (cf. proof of Proposition \ref{prop:multivariate_markov_chain}). Therefore $\sum_{j \in \mathcal{N}_b(i)} \tilde{V}_{i,j} > 1$. We reach a contradiction by observing
\begin{equation*}
\sum_{\substack{j=0 \\ j\neq i}}^{k-1} Q_{i,j} = \sum_{c=0}^{m} \sum_{j \in \mathcal{N}_c(i)} \tilde{V}_{i,j}p_c \xrightarrow{p_b \rightarrow 1} \sum_{j \in \mathcal{N}_b(i)} \tilde{V}_{i,j} > 1.
\end{equation*}
\item The value of $V_{i,j}$ is assigned in line \ref{alg:line:reverse} of Algorithm \ref{alg:construction_markov_chain}. Since the $Q$-chain is reversible, it follows that also $\tilde{V}_{j,i} > V_{j,i}$. However, the value of $V_{j,i}$ is assigned in line \ref{alg:line:max} of the algorithm and we reach the same contradiction as before.
\end{itemize}
\end{proof}

\noindent \textbf{Proof of Proposition \ref{prop:multivariate_update_function}}
\begin{proof}[\unskip\nopunct]
Fix a state $i \in \Omega$ and notice that if $j \not\in \mathcal{N}(i)$, then $\mathbb{P}(\phi(i,B,U) = j) = P_{i,j} = 0$. For any outcome $b \in
\{0,\ldots,m\}$ on the die, recall $\mathcal{N}_b(i) =
\{j_0,\ldots,j_{w}\},$ is  the set of states accessible from $i$. It follows for any $j_l \in \mathcal{N}_b(i)$ that
\begin{align*}
\mathbb{P}(\phi(i,B,U) = j_l) &= \mathbb{P}\left(B = b, \sum_{h=0}^{l-1} V_{i,j_h} < U \leq \sum_{h=0}^l V_{i,j_h} \right) \\
&= p_b\mathbb{P}\left(U \leq V_{i,j_l}\right) = P_{i,j_l}.
\end{align*}
Hence, $\phi$ is an update function for the Markov chain $(X_t)_{t \in \mathbb{N}}$.
\end{proof}

\noindent \textbf{Proof of Corollary \ref{prop:markov_chain_update_function_univariate}}
\begin{proof}[\unskip\nopunct]
Given a fine and connected ladder $\pi: (0,1) \rightarrow \Delta^k$ as in equation \eqref{eq:univariate_connected_fine_ladder}, for $1 \leq i \leq k-1,$ we have 
\begin{alignat*}{2}
&\mathcal{N}_0(i) = \{i-1\}, \qquad &&\mathcal{N}_1(i) = \{i+1\}, \qquad \mathcal{N}(i) = \{i-1,i+1\}, \\
&\mathcal{S}_0(i) = R_{i-1}, \qquad &&\mathcal{S}_1(i) = R_{i+1}.
\end{alignat*}
Then, the matrix $W$ defined in equation \eqref{eq:transition_matrix_W} and the off-diagonal entries of the matrix $V$ output by Algorithm \ref{alg:construction_markov_chain} are given by
\begin{equation*}
W_{i,j} = \begin{cases}
p &\quad \text{if } j = i+1 \\
(1-p) &\quad \text{if } j = i-1 \\
0 &\quad \text{otherwise }
\end{cases}, \qquad 
V_{i,j} = \begin{cases}
\frac{R_{i+1}}{R_i \vee R_{i+1}} &\quad \text{if } j = i+1 \\
\frac{R_{i-1}}{R_{i-1} \vee R_i} &\quad \text{if } j = i-1 \\
0 &\quad \text{otherwise }
\end{cases}
\end{equation*}
Therefore, the transition matrix $P$ defined in \eqref{eq:transition_matrix_product} is equivalent to \eqref{eq:transition_probs_ladder_univariate} and the update function defined in \eqref{eq:update_function_ladder_multivariate} is the same as \eqref{eq:update_function_ladder_univariate}.

To see why $\phi$ is a monotonic update function, consider $i \leq j$. It is trivial to check that $\phi(i,B,U) \leq \phi(j,B,U)$ if $j \neq i+1$. If $j = i+1$, the monotonic condition would not be satisfied only if $\phi(i,B,U) = i+1$ and $\phi(i+1,B,U) = i$. However, this can not happen as it would require $B$ to be equal to $0$ and $1$ simultaneously.
\end{proof}

\noindent \textbf{Proof of Theorem \ref{conj:logconcavity_convolution_binomial}}
\begin{proof}

Let $w_0,\ldots, w_{n_0}$ be the probabilities of $W$ on $\Omega = \{0,\ldots,n_0\}$. We shall consider generating function $P(x)=\sum_{i=0}^{n_0} w_i x^i$. This function is a product of linear and quadratic functions, that is
\[
	P(x) = c \prod_{j=1}^{k_0} ((x-a_j)^2 + b_j^2) \prod_{l=1}^{l_0} (x+c_l),
\]
where $b_j \ne 0$ and $c_l>0$ (the latter follows from the fact that a
polynomial with positive coefficients cannot have positive
roots). Now, it suffices to show that for big $n$ the sequences of
coefficients generated by
\[
	Q_n(x) = ((x-a)^2 + b^2) (1+x)^n, \quad L_n(x) = (x+c)(1+x)^n, \enspace \text{where }b\neq 0, c > 0, 
\] 
is positive and log concave. Indeed, since convolution preserves positivity and log-concavity, and corresponds to summing random variables, we can find suitable binomial $B(n,1/2)$ (whose generating function is precisely $\frac{1}{2^n}(1+x)^n$) for each factor of $P$ separately. Note that we ignore normalizing constants, as positivity and log-concavity are not affected.

\vspace{0.2cm}

The rest is just an attempt to verify this. In case of $L_n$ there is nothing to prove since the sequence generated by $x+c$ with $c > 0$ is $(c,1,0,\ldots)$ and it is positive and log-concave itself. Since $(x-a)^2 + b^2 = x^2 -2ax + a^2+b^2$, the sequence generated by $Q_n$ is 
\begin{equation*}
	a_k =  (a^2+b^2){n \choose k} - 2a  {n \choose k-1} + {n \choose k-2} , \qquad k \geq 0.
\end{equation*}
Here we adapt the notation ${n \choose k}=0$ for $k<0$ and $k>n$.
We first show that for big $n$ this sequence is non-negative. The inequality $a_k \geq 0$ is equivalent to
\[
	(a^2+b^2)(n-k+1)(n-k+2) -2a k (n-k+2) + k(k-1) \geq 0.
\]
Let us treat the left hand side as a polynomial in $k$. This is
\begin{align*}
	k^2 \left(a^2+2 a+b^2+1\right) & +k \left((-2 n-3) \left(a^2
   +b^2\right)-2 a n-4 a-1 \right) \\ & \qquad \qquad \qquad \qquad +(n+1) (n+2) \left(a^2+b^2\right).
\end{align*}
Since the coefficient in front of $k^2$ is positive, we can hope to find $n$ such that this polynomial is positive for all real $k$. For this the $\Delta$ of this quadratic form should be negative. We have
\begin{align*}
	\Delta & = \left((-2 n-3) \left(a^2+b^2\right)-2 a n-4
                 a-1\right)^2
\\ & \qquad \qquad -4 (n+1) (n+2) \left(a^2+b^2\right) \left(a^2+2 a+b^2+1\right) \\
	& = - 4 b^2 n^2 +  4 (a + 2 a^2 + a^3 - 2 b^2 + a b^2) n 
\\ & \qquad \qquad 
+ (1 + 8 a + 14 a^2 + 8 a^3 + a^4 - 2 b^2 + 8 a b^2 + 2 a^2 b^2 + 
   b^4)  .
\end{align*}
As we can see the leading term is $-4b^2n^2$ and so for big $n$ we get $\Delta<0$.

We now show that for big $n$ the sequence $a_k$ is strictly log-concave; i.e., $a_k^2 > a_{k+1}a_{k-1}$. This is trivially true for $k=0$ and $k=n+2$, but it is also easily verified for $k\in \{1, n-1, n, n+1\}$ by just substituting the value of $k$ and letting $n \to \infty$.

To prove the result for $k \in \{2,\ldots,n-2\}$, rewrite the coefficients $a_k$ as:
\[
	a_k = {n \choose k-1} \left[ \frac{n-k+1}{k}(a^2+b^2) + \frac{k-1}{n-k+2} - 2a \right].
\] 
The inequality $a_k^2 > a_{k+1}a_{k-1}$ reduces to 
\begin{align*}
	{n \choose k-1}^2 & \left[ \frac{n-k+1}{k}(a^2+b^2) +
                          \frac{k-1}{n-k+2} - 2a \right]^2  
\\ &
                                                                    \hspace{1.7cm}
                                                                     >
     {n
                                                                    \choose
                                                                    k}
                                                                    \left[
                                                                    \frac{n-k}{k+1}(a^2+b^2)
                                                                    +
                                                                    \frac{k}{n-k+1}
                                                                    -
                                                                    2a
                                                                    \right]                                                           
  \\
	& \hspace{2cm} \times {n \choose k-2} \left[ \frac{n-k+2}{k-1}(a^2+b^2) + \frac{k-2}{n-k+3} - 2a \right],
\end{align*}
This is
\begin{align*}
	\frac{k}{k-1} \cdot \frac{n-k+2}{n-k+1} & \left[ \frac{n-k+1}{k}(a^2+b^2) + \frac{k-1}{n-k+2} - 2a \right]^2 \\
	& \hspace{1.7cm}
	 >  \left[ \frac{n-k}{k+1}(a^2+b^2) + \frac{k}{n-k+1} -
          2a \right] \\ & \hspace{2cm} \times \left[ \frac{n-k+2}{k-1}(a^2+b^2) + \frac{k-2}{n-k+3} - 2a \right].
\end{align*}
%
To deal with it we rewrite it slightly.
\begin{align*}
&	 \left[ \frac{n-k+1}{k}(a^2+b^2) + \frac{k-1}{n-k+2} - 2a \right]^2 \\
	& \hspace{0.3cm}
	 >  \left[ \frac{(n-k)}{(k+1)}(a^2+b^2) + \frac{k}{n-k+1} - 2a  \right] \\
	 & \qquad \times \left[ \frac{n-k+1}{k} (a^2+b^2) + \frac{(k-2)(k-1)(n-k+1)}{(n-k+3)(n-k+2)k}\right. \\
	 &\qquad \qquad \left. - 2a \frac{(k-1)(n-k+1)}{(n-k+2)k} \right].
\end{align*}
For big $n$ and fixed $a, b$, the right hand side is a product of two positive factors. We shall take the square root of both sides and use the inequality $2\sqrt{xy} \leq x+y$ to bound the right hand side. Then, it is enough to verify:
\begin{align*}
&	 2\left[\frac{n-k+1}{k}(a^2+b^2) + \frac{k-1}{n-k+2} - 2a\right] \\
	& \hspace{0.3cm}
	 >  \left[ \frac{(n-k)}{(k+1)}(a^2+b^2) + \frac{k}{n-k+1} - 2a  \right] \\
	 & \qquad + \left[ \frac{n-k+1}{k} (a^2+b^2) + \frac{(k-2)(k-1)(n-k+1)}{(n-k+3)(n-k+2)k}\right. \\
	 &\qquad \qquad \left. - 2a \frac{(k-1)(n-k+1)}{(n-k+2)k} \right].
\end{align*}
Rewrite it by taking the RHS to the LHS and collecting common factors.
\begin{align*}
& \left[\frac{(n-k+1)}{k} - \frac{n-k}{k+1}\right](a^2+b^2) + \frac{2(k-1)}{n-k+2} - \frac{k}{n-k+1}  \\
& \quad - \frac{(k-2)(k-1)(n-k+1)}{(n-k+3)(n-k+2)k} - \left[ 1 - \frac{(k-1)(n-k+1)}{(n-k+2)k} \right]2a > 0.
\end{align*}
Notice:
\begin{align*}
& \bullet \quad \frac{(n-k+1)}{k} - \frac{n-k}{k+1} = \frac{n+1}{k(k+1)}, \\
& \bullet \quad \frac{2(k-1)}{n-k+2} - \frac{k}{n-k+1}  - \frac{(k-2)(k-1)(n-k+1)}{(n-k+3)(n-k+2)k} =  \\
& \quad \qquad -\frac{(1+k+(k-1)^2+n-(k-1) n)(n+1)}{(n-k+1)(n-k+2)(n-k+3)k}, \\
& \bullet \quad 1 - \frac{(k-1)(n-k+1)}{(n-k+2)k} = \frac{n+1}{(n-k+2)k}.
\end{align*}

 Thus, by taking the common denominator, it is enough to verify $P_{a,b}(k,n) > 0$, where
\begin{align*}
&P_{a,b}(k,n) = (a^2+b^2) (n-k+3) (n-k+2) (n-k+1) \\
&\hspace{0.5cm} - \left(1+k+(k-1)^2+n-(k-1) n\right)(k+1) \\
&\qquad - 2a(k+1)(n-k+3)(n-k+1).
\end{align*}
This is a polynomial of degree three in $k$. The discriminant of a cubic polynomial $Ak^3 + Bk^2 + Ck + D$ is given by
\begin{equation*}
\Delta = B^2C^2 - 4 A C^3 - 4B^3D - 27A^2D^2 + 18 ABCD,
\end{equation*}
and is negative if there are two conjugate complex and one real roots. 

In our case the discriminant of $k \to P_{a,b}(k,n)$ is
\begin{equation*}
\Delta(n,a,b) = -4b^2n^6+O(n^5),
\end{equation*}
and so for big $n$ it is negative (recall that $b \neq 0$). We conclude that there is only one real root. Notice that
\begin{align*}
P_{a,b}(2,n) &= (a^2+b^2)n^3+O(n^2), \\
P_{a,b}(n-2,n) &= n^2+O(n),
\end{align*}
so that for $n$ big enough, $P_{a,b}(k,n) > 0$ for all $k \in [2,n-2]$ as desired.

\end{proof}

\noindent \textbf{Proof of Proposition \ref{prop:bound_expected_rolls}}
\begin{proof}
Augment the ladder $d$ times to construct a new ladder $\pi':
\Delta^m  \rightarrow \Delta^w$, where $w < \min\{(k+1)(m+1)^d, {{2d+m}\choose{m}}\}$. We showed in Proposition \ref{prop:multivariate_impose_connected_fineness_condition} that $\pi'$ is a fine and connected ladder and that we can define sets $A_0,\ldots,A_{k}$ as in equation \eqref{eq:sets_disaggregation_ladder}. We now show that for any state $a \in \Omega'$, it is always possible to move to a different state if $b \in E$ is rolled (except from the state proportional to $p_b^{2d}/C(\boldsymbol{p})$). Fix a state $a \in \Omega'$ in the set $A_i$, therefore of the form
\begin{equation*}
C_a\pi_i(\boldsymbol{p}){{d}\choose{\boldsymbol{n}}} \boldsymbol{p}^{\boldsymbol{n}}, \quad\text{for some }\boldsymbol{n} \in \Lambda_d^m.
\end{equation*}
If $\boldsymbol{p}^{\boldsymbol{n}} \neq p_b^d$, then there exists another state $a' \in A_i$ connected to $a$ and such that $n'_{a',b} = n'_{a,b}+1$ and the chain may move to it. We showed in the proof of Proposition \ref{prop:multivariate_impose_connected_fineness_condition} that $A_i \cap A_j \neq \emptyset, \forall j \neq i$. Therefore, if $\boldsymbol{p}^{\boldsymbol{n}} = p_b^d$ there exists a connected state $a'$ in $A_j \neq A_i$ such that $n'_{a',b} = n'_{a,b}+1$, unless $\pi_i(\boldsymbol{p}) \propto p_b^{2d}/C(\boldsymbol{p})$.

Now, consider applying CFTP on the ladder $\pi'$ using the transition matrix of Proposition \ref{prop:multivariate_markov_chain} and the update function of Proposition \ref{prop:multivariate_update_function}. We prove the bound by considering sets of moves that, regardless of the starting point, end up in a singleton. Let $a$ be the minimum of the entries of the matrix $V$, as produced by Algorithm \ref{alg:construction_markov_chain}. This choice of $a$ allows us to conclude that whenever we draw $U < a$ in the CFTP algorithm and $B \in E$, then all the tracked particles move, except the particles in the state proportional to $p_b^{2d}/C(\boldsymbol{p})$. Therefore if such event happens on $2d$ consecutive iterations, then the algorithm necessarily ends as all the particles must have coalesced in the state proportional to $p_{b}^{2d}/C(\boldsymbol{p})$. That is, if $u_1 \leq a, \ldots, u_{2d} \leq a$ we can write
\begin{equation*}
\phi_{2d}(i,(b,\ldots,b),(u_1,\ldots,u_{2d})) = \{a\}, \qquad \forall i \in \{0,\ldots,w\},
\end{equation*}
where $a \in \Omega'$ is the state of the ladder proportional to $p_b^{2d}/C(\boldsymbol{p})$. Let $\tau_b$ be the number of iterations required for this event to happen for the first time. The probability generating function of $\tau_b$ is given by
\begin{align*}
f_{\tau_b}(x) &= \sum_{j=0}^\infty (ap_b)^{2d}x^{2d}\left[ (1-ap_b)x+\ldots+(ap_b)^{2d-1}(1-ap_b)x^{2d} \right]^j \\
 &= \frac{x^{2 d} (a p_b)^{2 d} (a p_b x-1)}{a p_b x (a p_b x)^{2 d}-x (a p_b x)^{2 d}+x-1},
\end{align*}
so that
\begin{equation*}
\mathbb{E}[\tau_b] = f'_{\tau_b}(1) = \frac{(ap_b)^{-2d}-1}{1-ap_b}.
\end{equation*}
Since the number of required rolls $N$ equals the number of iterations of the algorithm, it follows that $N \leq \tau_b$. The same reasoning holds for all $b \in E$, so that we conclude:
\begin{equation*}
\mathbb{E}[N] \leq \min_{b \in E} \mathbb{E}[\tau_b] = \min_{b \in E} \frac{(ap_b)^{-2d}-1}{1-ap_b}.
\end{equation*}
\end{proof}

\noindent \textbf{Proof of Corollary \ref{cor:bound_expected_tosses}}
\begin{proof}
Follows by Proposition \ref{prop:bound_expected_rolls} by noticing that in the case $m = 1$, we necessarily have $E = \{0,1\}$.
\end{proof}

\noindent \textbf{Proof of Proposition \ref{prop:efficiency_monotonic_CFTP}}
\begin{proof}
Requiring $\pi$ to be strictly log-concave is equivalent to have $R_i^2 > R_{i-1}R_{i+1}$ for all $i \in \{1,\ldots,k-1\}$ by equation \eqref{eq:univariate_connected_fine_ladder}. In turn, this implies
\begin{equation}
\label{eq:condition_fast_simulation}
\frac{R_{i}}{R_{i-1} \vee R_{i}} \geq \frac{R_{i+1}}{R_{i} \vee R_{i+1}}, \quad
\frac{R_{i}}{R_i \vee R_{i+1}} \geq \frac{R_{i-1}}{R_{i-1} \vee R_{i}}, 
\end{equation}
so that $\rho \leq 1$ since $P_{i,i+1} \geq P_{i+1,i+2}$ and $P_{i+1,i} \geq P_{i,i-1}$. However, given $p \in (0,1)$, it cannot be that $\rho = 1$. Indeed, this could happen only if $P_{i,i+1} = P_{i+1,i+2}$ and $P_{i+1,i} = P_{i,i-1}$. However, this would imply either $R_i^2=R_{i-1}R_{i+1}$ or $R_{i+1}^2 = R_iR_{i+2}$ thus contradicting strict log-concavity. We then conclude that $\rho \in (0,1)$ for all $p \in (0,1)$.

Denote by $X_t^i$ the chain at time $t$ given that it started in state $i$. Monotonic CFTP (cf. Algorithm \ref{alg:monotonic_CFTP}) tracks backwards in time the trajectories of the coupled chains $X_t^0$ and $X_t^k$ and stops when the two coalesce. Following the notation of \cite{Propp1996}, let $T_\star$ be the time this happens and, to ease the analysis, define $T^\star$ as the smallest time such that $X_t^0 = X_t^k$, where the chains are now tracked forwards in time. Notice that $T_\star$ and $T^\star$ have the same distribution and that the number of tosses $N$ required by the algorithm equals $T_\star$. 

Define $D_t^{i,j} = |X_t^i - X_t^j|$ as the distance between two coupled  particles started at states $i$ and $j$ after $t$ steps. In particular, focus on the distance $D_t^{i,i+1}$ between two particles started at consecutive states. At each step a $p$-coin is tossed and a uniform random variable is drawn so that the trajectories of the two chains can be tracked in a coupled fashion. In particular, given equation \eqref{eq:condition_fast_simulation}, we have that the two particles started at states $i$ and $(i+1)$ can in one step either stay still, coalesce in state $i$ or state $(i+1)$, move to states $(i+1)$ and $(i+2)$ or move to states $(i-1)$ and $i$ respectively. Therefore, after one step the distance between the two coupled and consecutive particles can either decrease by 1 or remain the same:
\begin{equation*}
D_1^{i,i+1} = \begin{cases}
0 &\quad \text{with probability } (P_{i,i+1}-P_{i+1,i+2}) + (P_{i+1,i}-P_{i,i-1})\\
1 &\quad \text{with probability } 1-(P_{i,i+1}-P_{i+1,i+2}) - (P_{i+1,i}-P_{i,i-1})\\
\end{cases}
\end{equation*}
where the transition probabilities $P_{i,j}$ are given in equation \eqref{eq:transition_probs_ladder_univariate}. Denote  by
\begin{equation*}
\rho_{i,i+1} = 1-(P_{i,i+1}-P_{i+1,i+2}) - (P_{i+1,i}-P_{i,i-1}),
\end{equation*}
 so that $\mathbb{E}[D_1^{i,i+1}]  = \rho_{i,i+1}$. Let $\rho =
 \max_i \rho_{i,i+1}$ and notice that by conditioning on how the particles move on the first step and by the Markov property, it follows
 \begin{align*}
\mathbb{E}[D_t^{i,i+1}] &= P_{i+1,i+2}\mathbb{E}[D_{t-1}^{i+1,i+2}] + P_{i,i-1}\mathbb{E}[D_{t-1}^{i-1,i}] + \left(1 - P_{i,i+1} - P_{i+1,i}\right)\mathbb{E}[D_{t-1}^{i,i+1}] \\
&\leq (\mathbb{E}[D_{t-1}^{i-1,i}] \vee \mathbb{E}[D_{t-1}^{i,i+1}] \vee \mathbb{E}[D_{t-1}^{i+1,i+2}]) \rho_{i,i+1} \nonumber \\ 
&\leq \rho^{t}, \nonumber
\end{align*}
where $\vee$ denotes the maximum between two numbers. 

To conclude, notice that $\mathbb{P}(T^\star \geq t) = \mathbb{P}(D_t^{0,k} \geq 1)$. It then follows by Markov's inequality and the result above that
\begin{align*}
\mathbb{P}(T^\star \geq t) = \mathbb{P}(D_t^{0,k} \geq 1) \leq \mathbb{E}[D_t^{0,k}] = \sum_{i=0}^{k-2} \mathbb{E}[D_t^{i,i+1}] \leq (k-1) \rho^t,
\end{align*}
as desired.
\end{proof}

\noindent \textbf{Proof of Lemma \ref{lemma:impose_efficiency_condition}}
\begin{proof}
Note that a univariate ladder is log-concave if its coefficients $R_i$ define a log-concave sequence. Then, let $R$ be a random variable on $\{0,\ldots,k\}$ having p.m.f. proportional to the coefficients $R_i$ of the ladder $\pi$, that is such that $\mathbb{P}(R=i) \propto R_i$. Moreover, let $n$ be such that $Z = R + B_n$ is strictly log-concave, as stated in Theorem \ref{conj:logconcavity_convolution_binomial}. Consider $\pi':
(0,1) \rightarrow \Delta^{k+n}$, an $n$-fold augmentation of $\pi$. As
noticed in Remark \ref{rmk:augmenting_convolution}, $Y \sim \pi'(p)$  has the same distribution as $\pi + \text{Bin}(n,p)$ and the coefficients $R'_i$s of the ladder $\pi'$ are proportional to $\mathbb{P}(Z=i)$. The desired result holds by noticing that multiplication by a constant preserves log-concavity. 
\end{proof}

\bibliographystyle{imsart-number}
\bibliography{references_dice_enterprise}

\end{document}